\setlist{label={$($\roman{enumi}\kern1pt$)$}}
\newtheorem{thm}{Theorem}[section]
\newtheorem{prop}[thm]{Proposition}
\newtheorem{cor}[thm]{Corollary}
\newtheorem*{cor*}{Corollary}
\newtheorem{lema}[thm]{Lemma}
\newtheorem*{lema*}{Lemma}
\numberwithin{equation}{section}
\theoremstyle{definition}
\newtheorem*{obs}{Remark}
\newcommand{\PI}[2]{\left\langle \,#1 , #2\, \right\rangle}
\newcommand{\St}{\mathcal{S}}
\newcommand{\HH}{\mathcal{H}}
\newcommand{\KK}{\mathcal{K}}
\newcommand{\M}{\mathcal{M}}
\newcommand{\LL}{\mathcal{L}}
\newcommand{\N}{\mathcal{N}}
\newcommand{\mc}[1]{\mathcal{#1}}
\newcommand{\ol}{\overline}
\newcommand{\clran}{\ol{\mathrm{ran}}\,}
\newcommand{\cldom}{\ol{\mathrm{dom}}\,}
\newcommand{\ra}{\rightarrow}
\newcommand{\matriz}[4]{\displaystyle\
	\left(
	\begin{array}{cc}
		{#1}&{#2}\\
		{#3}&{#4}
	\end{array}
	\right)}
\newcommand{\vect}[2]{\displaystyle\
	\left(
	\begin{array}{cc}
		{#1}&{#2}\\
	\end{array}
	\right)}
\newcommand{\vecd}[2]{\displaystyle\
	\left(
	\begin{array}{cc}
		{#1}\\{#2}
	\end{array}
	\right)} 
\newcommand{\vecdm}[2]{\displaystyle\
	\begin{array}{cc}
		{#1}\\{#2}
	\end{array}} 
\DeclareMathOperator{\ran}{ran}
\DeclareMathOperator{\dom}{dom}
\DeclareMathOperator{\mul}{mul}
\begin{document}

  \title[A matrix formula for Schur complements of nonnegative selfadjoint l.r.]{A matrix formula for Schur complements of nonnegative selfadjoint linear relations}

  \author[Contino]{Maximiliano Contino}
	
  \address{
    Instituto Argentino de Matem\'atica ``Alberto P. Calder\'on'' \\
    CONICET\\
    Saavedra 15, Piso 3\\
    (1083) Buenos Aires, Argentina \\[5pt]
    Facultad de Ingenier\'{\i}a, Universidad de Buenos Aires\\
    Paseo Col\'on 850 \\
    (1063) Buenos Aires, Argentina}
    \email{mcontino@fi.uba.ar}

  \author[Maestripieri]{Alejandra~Maestripieri}
	
  \address{
    Instituto Argentino de Matem\'atica ``Alberto P. Calder\'on'' \\
    CONICET\\
    Saavedra 15, Piso 3\\
    (1083) Buenos Aires, Argentina \\[5pt]
    Facultad de Ingenier\'{\i}a, Universidad de Buenos Aires\\
    Paseo Col\'on 850 \\
    (1063) Buenos Aires, Argentina}
    \email{amaestri@fi.uba.ar}
  
  \author[Marcantognini]{Stefania Marcantognini}
	
  \address{
    Instituto Argentino de Matem\'atica ``Alberto P. Calder\'on'' \\
    CONICET\\
    Saavedra 15, Piso 3\\
    (1083) Buenos Aires, Argentina \\[5pt]
    Universidad Nacional de General Sarmiento -- Instituto de Ciencias
    \\ Juan Mar\'ia Gutierrez 
    \\ (1613) Los Polvorines, Pcia. de
    Buenos Aires, Argentina}
    \email{smarcantognini@ungs.edu.ar}
	
   \keywords{Schur complement, Shorted operators, linear relations, unbounded selfadjoint operators}
\subjclass{47A06, 47B25, 47A64}


\date{\today}

\begin{abstract} If a nonnegative selfadjoint linear relation $A$ in a Hilbert space and a closed subspace $\St$ are assumed to satisfy that the domain of $A$ is invariant under the orthogonal projector onto $\St,$ then $A$ admits a particular  matrix representation with respect to the decomposition $\St \oplus \St^\perp$. This matrix representation of $A$ is used to give explicit formulae for the Schur complement of $A$ on $\St$ as well as the $\St-$compression of $A$.
\end{abstract}
    
 \maketitle
	
\section{Introduction}
\label{sec:introduction}
Given a nonnegative selfadjoint linear relation $A$ in a Hilbert space $\HH$ and a closed subspace $\St$ of $\HH,$
it is not always the case that  $A$ admits a $2\times 2$ block matrix representation with respect to the decomposition $\St \oplus \St^\perp$. On the other hand, if it does, the matrix representation need not be unique. Results on this subject can be found in \cite{Engel, Moller, Hassi2, Chen, HassiMR}.
Under the hypothesis that ${\rm{dom}}(A)$ (the domain of $A$) is an invariant subspace for the orthogonal projection onto $\St,$ $P_\St$ (that is $\mc{D}_1:=P_{\St}(\dom(A)) \subseteq \dom(A)$), we show that $A$ can be represented by a  $2\times 2$ block matrix $( \begin{smallmatrix} a&b\\c&d\end{smallmatrix} )$ where $a, b, c$ and $d$ are linear relations. Furthermore, $A$ admits a specific representation $( \begin{smallmatrix} a&b\\c&d\end{smallmatrix} )$ similar to the one for bounded operators (cf. \cite{Shorted2}, \cite[Lema A.1]{Dritschel}), in the sense that $a$ and $d$  in this decomposition are nonnegative selfadjoint linear relations and there exists a contraction $g: \St^\perp \to \St$ such that $b=a^{1/2}gd^{1/2}|_{P_{\St^\perp}({\rm{dom}}(A))}$ and  $c=d^{1/2}g^*a^{1/2}|_{P_\St({\rm{dom}}(A))}$.  

In \cite{Arlinskii}, Arlinski\u{\i } proves that for $\leq$ the forms order \cite{Kato,Behrndt},  the maximum of the following set of nonnegative selfadjoint linear relations,
$$\{ X : 0 \leq X \leq A, \ \ran(X) \subseteq \St^{\perp}\}$$ always exists and he defines the Schur complement of the relation $A$ with respect to $\St,$ $A_{/ \St},$ as this maximum. 
Under the invariance condition mentioned above,  we give a matrix formula for  $A_{/\St}$ in terms of the matrix coefficients of $A;$ namely, $$A_{/\St}=\matriz{0}{0}{0}{T^*T}$$ with
$T:=\ol{D_gd^{1/2}|_{P_{\St^\perp}({\rm{dom}}(A))}},$ where $D_g:=(1-g^*g)^{1/2}$ is the defect operator associated to the matrix representation of $A.$ We also give an alternate proof of the existence of the Schur complement. This formula is an extension of the well known formula by Anderson and Trapp for bounded operators \cite{Shorted2}. We also define the $\St$-compression $A_{\St}$  of $A$. 
If we assume further that ${\rm{dom}}(A^{1/2})$ is an invariant subspace of the orthogonal projection $P_\LL$, 
where 
$\LL:=\ol{A^{1/2}(\mc{D}_1)} \cap \cldom(A),$ then we obtain Pekarev-type formulae for $A/_\St$ and $A_{\St}$ \cite{Pekarev}, and we show that $A = A_{\St}+ A/_\St$. 

The paper is organized as follows. In Section 2 we outline some background material, primarly on linear relations. Section 3 is devoted to the problem of representing a selfadjoint linear relation $A$ as a $2\times 2$ relation matrix $( \begin{smallmatrix} a&b\\c&d\end{smallmatrix} )$ with respect to the decomposition $\St \oplus \St^\perp$. 
In Proposition \ref{propMDLR},  we prove that the relation $A$ admits a $2 \times 2$ block matrix representation with respect to $\St \oplus \St^{\perp}$ if and only if its operator part $A_0$ admits  a block matrix representation with respect to $\ol{\mc{D}_1}$  plus the extra condition $\St \ominus \mc{D}_1 \subseteq \mul(A)$ (the multivalued part of $A$). The main result of this section is Theorem \ref{thmRMRL}, where this matrix representation of $A$ is fully described when $A$ is nonnegative.
In  Section 4,  we  again use the matrix representation of the nonnegative selfadjoint  linear relation $A$ to derive formulae for the Schur complement and compression of $A.$

\section{Preliminaries}
\label{sec:preliminaries}

Throughout, all spaces are complex and separable Hilbert spaces.  As usual, the direct sum of two subspaces $\M$ and $\N$ of a Hilbert space $\HH$ is indicated by $\M \dotplus \N$ and the
orthogonal direct sum by $\M \oplus \N.$   The orthogonal complement of a subspace $\mc{M} \subseteq \HH$ is written as $\mc{M}^\perp,$ or $\HH \ominus \M$ interchangeably. The symbol $P_{\M}$ denotes the orthogonal projection with range $\M$.

The space of everywhere defined bounded linear operators from $\HH$ to
$\KK$ is written as $L(\HH, \KK)$, or $L(\HH)$ when $\HH = \KK.$
The identity operator on $\HH$ is written as $1$, or $1_{\HH}$ if it
is necessary to disambiguate.

The notion of Schur complement (or shorted operator) of $A$ to $\St$ for a nonnegative selfadjoint operator $A \in L(\HH)$ and $\St \subseteq \HH$ a closed subspace, was introduced by M.G.~Krein \cite{Krein}. When $\leq$ is the usual order in $L(\HH)$, he proved that the set $\{ X \in L(\HH): \ 0\leq X\leq A \mbox{ and } \ran(X)\subseteq \St^{\perp}\}$ has a maximum element, which he defined as the {{Schur complement}} $A_{/ \St}$ of $A$ to $\St.$ This notion was later rediscovered by Anderson and Trapp \cite{Shorted2}. If $A$ is represented as the $2\times 2$ block matrix $( \begin{smallmatrix} a&b\\b^*&d\end{smallmatrix} )$ with respect to the decomposition of $\HH = \St \oplus \St^{\perp},$ they established the formula 
$$A_{/ \St}= \begin{pmatrix} 0 & 0\\ 0& d - y^*y\end{pmatrix}$$ where $y$ is the unique solution of the equation $b = a^{1/2} x$ such that the range inclusion $\ran(y) \subseteq \clran(a)$ holds.

\medskip
Although familiarity with the theory of linear relations is presumed, some background material from \cite{Hassi} is summarized below.

A linear relation (l.r.) from a Hilbert space $\HH$ to a Hilbert space $\KK$ is a linear subspace $T$ of the cartesian product $\HH \times \KK.$ 
The domain,  range, null space or kernel and multivalued part of $T$ is denoted by $\dom(T),$  $\ran(T),$ $\ker(T)$ and  $\mul(T),$ respectively. When $\mul(T)=\{0\},$ $T$ is an operator; in this case, the operator $T$ is uniquely determinated by $Tx=y$ for $(x,y) \in T.$ 

The sum of two linear relations $T$ and $S$ from $\HH$ to $\KK$ is the linear relation defined by
 $$T+S:=\{(x,y+z): (x,y ) \in T \mbox{ and } (x,z) \in S\}.$$
 The componentwise sum is the linear relation defined by
 $$T \ \hat{+}  \ S:=\{(x_1+x_2,y+z): (x_1,y ) \in T \mbox{ and } (x_2,z) \in S\}.$$ The componentwise sum of $T$ and $S$ with $T \perp S$ is denoted by $T\ \hat{\oplus} \ S.$ 
Let $T$ be a linear relation from  $\HH$ to a Hilbert space $\mc{E}$ and let $S$ be a linear relation from $\mc{E}$ to  $\mc{K}$ then the product $ST$ is a linear relation from $\HH$ to $\KK$ defined by
 $$ST:=\{(x,y): (x,z) \in T \mbox{ and } (z,y) \in S \mbox{ for some } z \in \mc{E}\}.$$
 If $T \in L(\HH, \mc{E})$ then $(x,y) \in ST$ if and only if $(Tx,y) \in S.$
 
 The closure of a linear relation from $\HH$ to $\KK$ is the closure of the linear subspace in $\HH \times \KK,$ when the product is provided with the product topology. The closure of an operator need not be an operator; if it is then one speaks of a \emph{closable operator}. The relation $T$ is called \emph{closed} when it is closed as a subspace of $\HH \times \KK.$ 
 The \emph{adjoint} relation from $\KK$ to $\HH$ is defined by $$T^*:=JT^{\perp}=(JT)^{\perp},$$ where $J(x,y)=(y,-x).$ The adjoint 
 is automatically a closed linear relation and, if $\ol{T}$  denotes the closure of $T,$ then  $\ol{T}=T^{**}:=(T^*)^*.$ By definition, it is immediate that $\ol{T}^*=T^*.$  Clearly,
 $$T^*=\{(x,y) \in \KK \times \HH: \PI{g}{x}=\PI{f}{y} \mbox{ for all } (f,g) \in T \}.$$ Hence $\mul(T^*)=\dom(T)^{\perp}$ and $\ker(T^*) =\ran(T)^{\perp}.$ Then, if $T$ is closed both $\ker(T)$ and $\mul(T)$ are closed subspaces.
 
Let $T$ be a linear relation from  $\HH$ to a Hilbert space $\mc{E}$ and let $S$ be a linear relation from $\mc{E}$ to $\mc{K}$ then
\begin{equation} \label{product}
T^*S^* \subset (ST)^*	
\end{equation}
and there is equality in \eqref{product} if $S \in L(\mc{E}, \mc{K})$. If $T$ and $S$ are linear relations from $\HH$ to $\KK$ then 
 \begin{equation} \label{sum}
 T^*+S^* \subset (T+S)^*	
 \end{equation}
 and there is equality in \eqref{sum} if $S \in L(\mc{H}, \mc{K}).$
 
 Let $T$ be a (not necessarily closed) linear relation in $\HH.$ Define $T_0:=T \cap (\cldom(T) \times \cldom(T^*))$ and $T_{\mul}:=\{0\} \times \mul(T).$ Then $T_0$ is a closable operator from $\cldom(T)$ to $\cldom(T^*)$ \cite{Hassi2}. 
 
 \begin{thm}[{\cite[Theorem 3.9]{Hassi2}}]  \label{TdecompHassi} Let $T$ be a (not necessarily closed) linear relation in $\HH.$ If there exists a linear relation $B$ in $\HH$ such that
\begin{equation} \label{decom}
T=B \  \hat{+} \ T_{\mul}, \  \ \ \ran(B) \subseteq \cldom(T^*),
\end{equation}
 then the sum in \eqref{decom} is direct and $B$ is a closable operator which coincides with
 $T_0.$ In particular, the decomposition of $T$ in \eqref{decom} is unique.
 \end{thm}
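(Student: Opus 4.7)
The plan is to establish, in order: directness of the componentwise sum, the fact that $B$ is a closable operator, and the identification $B=T_0$. The single observation that drives everything is the inclusion $\cldom(T^*)\subseteq\mul(T)^\perp$, which follows from $\mul(T)\subseteq\mul(\ol{T})=\dom(T^*)^\perp$ by passing to orthogonal complements.

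First I would check directness of $B \hat{+} T_{\mul}$. If $(x,y)\in B\cap T_{\mul}$, then $x=0$ and $y\in\mul(T)$; but also $y\in\ran(B)\subseteq\cldom(T^*)\subseteq\mul(T)^\perp$, forcing $y=0$. The same reasoning shows that $B$ is an operator: if $y\in\mul(B)$, then $(0,y)\in B\subseteq T$ yields $y\in\mul(T)$, while simultaneously $y\in\ran(B)\subseteq\mul(T)^\perp$.

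Next I would verify closability. From $B\subseteq T$ we get $\ol{B}\subseteq\ol{T}$, hence $\mul(\ol{B})\subseteq\mul(\ol{T})=\dom(T^*)^\perp$. On the other hand, $\mul(\ol{B})\subseteq\ol{\ran(B)}\subseteq\cldom(T^*)$. Combining these two, $\mul(\ol{B})\subseteq\cldom(T^*)\cap\cldom(T^*)^\perp=\{0\}$, so $\ol{B}$ is an operator and $B$ is closable.

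Finally, to show $B=T_0$: the inclusion $B\subseteq T_0$ is immediate since $B\subseteq T$, $\dom(B)\subseteq\cldom(T)$ and $\ran(B)\subseteq\cldom(T^*)$. For the reverse, given $(x,y)\in T_0\subseteq T$, the assumed decomposition provides $(x_1,y_1)\in B$ and $z\in\mul(T)$ with $x=x_1$ and $y=y_1+z$. Since $y\in\cldom(T^*)$ and $y_1\in\ran(B)\subseteq\cldom(T^*)$, the difference $z=y-y_1$ lies in $\cldom(T^*)\cap\mul(T)\subseteq\mul(T)^\perp\cap\mul(T)=\{0\}$, so $(x,y)=(x_1,y_1)\in B$. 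Uniqueness of the decomposition then falls out automatically, since any $B$ satisfying the hypotheses must coincide with the intrinsic object $T_0$. The main obstacle, in my view, is organizing the bookkeeping around the asymmetric inclusion $\cldom(T^*)\subseteq\mul(T)^\perp$ (rather than conflating $\mul(T)$ with $\mul(\ol{T})$); once that is handled cleanly, each of the three claims reduces to a short manipulation.
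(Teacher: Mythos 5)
Your proof is correct and complete. Note that the paper does not prove this statement at all---it is quoted verbatim from Hassi--de Snoo--Szafraniec \cite[Theorem 3.9]{Hassi2}---so there is no in-paper argument to compare against; your argument is the natural one, and it hinges on exactly the inclusion $\cldom(T^*)=\mul(\ol{T})^\perp\subseteq\mul(T)^\perp$ that the paper itself singles out in the remark following the theorem. Each of your three steps (directness and single-valuedness of $B$, closability via $\mul(\ol{B})\subseteq\clran(B)\cap\dom(T^*)^\perp=\{0\}$, and the two inclusions giving $B=T_0$) checks out.
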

 
Hence if $T$ admits a componentwise sum decomposition of the form  \eqref{decom} then, since $\cldom(T^*) =\mul(\ol{T})^{\perp} \subseteq \mul(T)^{\perp},$ it follows that
 \begin{equation} \label{Tdecom}
T=T_0 \ \hat{\oplus} \ T_{\mul}.
\end{equation} 
We say that $T$ is \emph{decomposable} if $T$  admits the componentwise sum decomposition \eqref{decom}, or equivalently, \eqref{Tdecom}.
 
%
%
In particular, if $T$ is a closed linear relation in $\HH$  then $\mul(T)=\dom(T^*)^{\perp}$ and $T$ is decomposable and \eqref{Tdecom} is valid.
In this case, $T_0$ is a closed operator from $\cldom(T)$ to $\cldom(T^*)$ and $T_{\mul}$ is a closed linear relation. 
Also, $\dom(T_0)=\dom(T)$ and $\ran(T_0) \subseteq  \cldom(T^*).$ The \emph{operator part} $T_0$ is densely defined in $\cldom(T)$ and maps into $\cldom(T^*)$. The operator parts $T_0$ and $(T^*)_0$ are connected by
 \begin{equation} \label{T0}
 	(T_0)^{\times}=(T^*)_0,
 \end{equation}
 where $(T_0)^{\times}$ denotes the adjoint of $T_0$ when viewed as an operator from $\cldom(T)$ to $\cldom(T^*).$
 
 A linear relation $T$  in $\HH$ is \emph{symmetric} if $T \subset T^*,$   \emph{selfadjoint} if $T=T^*$ and \emph{nonnegative} if $\PI{y}{x} \geq 0$ for all $(x,y) \in T.$  If $T$ is a nonnegative selfadjoint linear relation we write $T \geq 0.$

 \begin{lema} \label{lemaTsa}  Let $T$ be a closed linear relation in $\HH$ and suppose that $T =T_0 \ \hat{\oplus} \ T_{\mul}$ as in \eqref{Tdecom}.
Then $T$ is selfadjoint if and only if $\cldom(T^*)=\cldom(T)$ and $T_0$ is a selfadjoint operator in $\cldom(T).$
 \end{lema}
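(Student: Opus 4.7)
The plan is to use the uniqueness of the decomposition $T = T_0 \ \hat{\oplus}\ T_{\mul}$ together with the conjugate relation $(T_0)^\times = (T^*)_0$ from \eqref{T0}. Since $T$ is closed, $T^*$ is also closed, so both $T$ and $T^*$ admit the decomposition \eqref{Tdecom}, and the proof reduces to matching operator parts and multivalued parts.

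For the forward implication, assume $T = T^*$. Then trivially $\dom(T) = \dom(T^*)$, so $\cldom(T) = \cldom(T^*)$. From \eqref{T0} we get $(T_0)^\times = (T^*)_0 = T_0$; since $\cldom(T)=\cldom(T^*)$, the map $(T_0)^\times$ is precisely the Hilbert space adjoint of $T_0$ regarded as a densely defined closed operator in $\cldom(T)$, and hence $T_0$ is selfadjoint in $\cldom(T)$.

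For the converse, assume $\cldom(T^*) = \cldom(T)$ and $T_0$ is selfadjoint in $\cldom(T)$. Since $T$ and $T^*$ are both closed, they are decomposable as in \eqref{Tdecom}:
\[
T = T_0 \ \hat{\oplus} \ T_{\mul}, \qquad T^* = (T^*)_0 \ \hat{\oplus} \ (T^*)_{\mul}.
\]
For the operator parts, \eqref{T0} gives $(T^*)_0 = (T_0)^\times$, and the selfadjointness hypothesis (together with $\cldom(T) = \cldom(T^*)$) yields $(T_0)^\times = T_0$, so $(T^*)_0 = T_0$. For the multivalued parts, since $T$ is closed we have $\mul(T) = \dom(T^*)^\perp$ and $\mul(T^*) = \dom(T)^\perp$; the hypothesis $\cldom(T) = \cldom(T^*)$ gives $\dom(T)^\perp = \dom(T^*)^\perp$, so $\mul(T) = \mul(T^*)$ and therefore $T_{\mul} = (T^*)_{\mul}$. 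By the uniqueness of the decomposition (Theorem \ref{TdecompHassi}) we conclude $T = T^*$.

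The main subtlety is the interpretation of $(T_0)^\times$: in \eqref{T0} it is the adjoint of $T_0$ as an operator from $\cldom(T)$ to $\cldom(T^*)$, so the identification of this adjoint with the adjoint of $T_0$ as an operator in the single space $\cldom(T)$ (which is what selfadjointness of $T_0$ refers to) requires precisely the identity $\cldom(T) = \cldom(T^*)$. Thus the two conditions in the statement are not redundant: one is needed to make sense of the selfadjointness of $T_0$ in a single Hilbert space, and together they match both components of the decomposition.
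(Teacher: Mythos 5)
Your proof is correct and follows essentially the same route as the paper: both directions rest on the identity $(T_0)^{\times}=(T^*)_0$ from \eqref{T0}, the equality $\mul(T)=\dom(T^*)^{\perp}$ for closed relations, and matching the operator and multivalued parts of the decompositions of $T$ and $T^*$. Your closing remark on why $\cldom(T)=\cldom(T^*)$ is needed to interpret $(T_0)^{\times}$ as an adjoint in a single space is a useful clarification, but the argument itself is the paper's.
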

\begin{proof} If $T=T^*$ then clearly $\cldom(T^*)=\cldom(T)$ and, by \eqref{T0}, $(T_0)^{\times}=(T^*)_0=T_0$ \cite{Hassi}. Conversely, suppose that $\cldom(T^*)=\cldom(T)$ and $T_0$ is a selfadjoint operator in $\cldom(T).$ Then $\mul(T)=\dom(T^*)^{\perp}=\dom(T)^{\perp}=\mul(T^*)$ and,  by \eqref{T0}, $(T^*)_0=(T_0)^{\times}=T_0.$ So that
$$T^*=(T^*)_0 \ \hat{\oplus} \ (\{0\} \times \mul(T^*))=T_0 \ \hat{\oplus} \ (\{0\} \times \mul(T))=T.$$
\end{proof}



Next  a well-known result due to von Neumann (see \cite[Proposition 3.18]{Schmudgen}) is extended to closed linear relations:

\begin{thm}[{\cite[Lemma 2.4]{Hassi}}] \label{thmVN} Let $T$ be a closed linear relation in $\HH.$ Then  $T^{*}T$ is a nonnegative selfadjoint linear relation in $\HH.$ Furthermore, 
\begin{equation} \label{TTLR1}
T^*T =T^*T_0={T_0}^*T_0,
\end{equation}
where $T_0$ is the operator part of $T.$
In particular
\begin{equation} \label{TTLR2}
\ker(T^*T)=\ker(T)=\ker(T_0) \mbox{ and } \mul(T^*T)=\mul(T^*)=\mul({T_0}^*).
\end{equation}
Also, the operator part of $T^*T$ is 
\begin{equation} \label{TTLR3}
(T^*T)_0=(T^*)_0T_0=(T_0)^{\times}T_0.
\end{equation}	
\end{thm}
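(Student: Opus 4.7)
The strategy is to reduce $T^*T$ to the classical operator product $(T_0)^{\times} T_0$ acting on $\cldom(T)$, apply the classical von Neumann theorem there, and then repackage the result via Theorem \ref{TdecompHassi} and Lemma \ref{lemaTsa}.

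First I would establish $T^*T = T^*T_0$. Given $(x,y) \in T^*T$ with intermediate element $w$, decompose $w = w_0 + w_1$ according to the orthogonal splitting $\HH = \cldom(T^*) \oplus \mul(T)$, which is valid because $T = T^{**}$ forces $\mul(T) = \dom(T^*)^{\perp}$. The decomposition $T = T_0 \,\hat{\oplus}\, T_{\mul}$ matches this splitting, so $(x,w_0) \in T_0$ and $w_1 \in \mul(T)$. The condition $(w,y) \in T^*$ requires $w \in \dom(T^*) \subseteq \cldom(T^*)$, forcing $w_1 = 0$. Hence $w \in \ran(T_0)$ and $(x,y) \in T^*T_0$; the reverse inclusion is immediate from $T_0 \subseteq T$. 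A parallel argument, using $T^* \subseteq T_0^*$ together with $\ran(T_0) \subseteq \cldom(T^*) = \mul(T)^{\perp}$ to absorb the extra $\mul(T)$-component present in $T_0^*$ but not in $T^*$, yields $T^*T_0 = T_0^*T_0$.

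Next I would expand $T^*T_0$ using $T^* = (T^*)_0 \,\hat{\oplus}\, (\{0\}\times\mul(T^*))$ and the identity $(T^*)_0 = (T_0)^{\times}$, obtaining
\[
T^*T \,=\, (T_0)^{\times} T_0 \,\,\hat{\oplus}\,\, \bigl(\{0\}\times\mul(T^*)\bigr),
\]
with the sum orthogonal because $\ran((T_0)^{\times} T_0) \subseteq \cldom(T) = \mul(T^*)^{\perp}$. Since $T$ is closed, $T_0$ is a closed densely defined operator from $\cldom(T)$ into $\cldom(T^*)$, and the classical von Neumann theorem gives that $(T_0)^{\times} T_0$ is a nonnegative selfadjoint operator in $\cldom(T)$. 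By Theorem \ref{TdecompHassi}, this is the canonical decomposition of $T^*T$, so $(T^*T)_0 = (T_0)^{\times} T_0 = (T^*)_0 T_0$ and $\mul(T^*T) = \mul(T^*) = \mul(T_0^*)$.

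Nonnegativity of $T^*T$ then follows from $\langle y, x \rangle = \|T_0 x\|^2 \ge 0$ on the operator part plus orthogonality of the $\mul(T^*)$-summand to $\dom(T) \subseteq \cldom(T)$. Selfadjointness follows from Lemma \ref{lemaTsa}, since $\cldom((T^*T)^*) = \mul(T^*T)^{\perp} = \mul(T^*)^{\perp} = \cldom(T) = \cldom(T^*T)$ and the operator part $(T_0)^{\times} T_0$ is selfadjoint in $\cldom(T)$. Finally, $\ker(T^*T) = \ker((T_0)^{\times} T_0) = \ker(T_0)$ by the classical identity, and $\ker(T_0) = \ker(T)$ because $(x,0) \in T$ forces $T_0 x \in \mul(T) \cap \cldom(T^*) = \{0\}$. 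I expect the most delicate step to be the first identity $T^*T = T^*T_0$, which hinges on the orthogonality $\cldom(T^*) \perp \mul(T)$ that closedness of $T$ provides.
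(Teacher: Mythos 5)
Your argument is correct: the reduction $T^*T=T^*T_0=T_0^*T_0=(T_0)^{\times}T_0\ \hat{\oplus}\ (\{0\}\times\mul(T^*))$ via the splitting $\HH=\cldom(T^*)\oplus\mul(T)$, followed by the classical von Neumann theorem for the closed densely defined operator $T_0$, is exactly the standard route, and the paper itself gives no proof here — it imports the statement from \cite[Lemma 2.4]{Hassi}. The only cosmetic point is that invoking Lemma \ref{lemaTsa} presupposes $T^*T$ closed; it is cleaner to note directly that $(T_0)^{\times}T_0\ \hat{\oplus}\ (\{0\}\times\mul(T^*))$ is selfadjoint because $(T_0)^{\times}T_0$ is selfadjoint and densely defined in $\cldom(T)=\mul(T^*)^{\perp}$, which also yields closedness.
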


%
%

Let $T \geq 0$ be  a linear relation in $\HH.$ Since $T$ is selfadjoint (and therefore closed), $\mul(T)=\dom(T)^{\perp}.$  Hence
$\HH= \cldom(T) \oplus \mul(T).$ In this case  $T$ can be written as
$T=T_0 \ \hat{\oplus} \ T_{\mul}$ where, by Lemma \ref{lemaTsa}, $T_0$ is a nonnegative selfadjoint operator in $\cldom(T).$ 
For $T \geq 0,$ the (unique) nonnegative selfadjoint \emph{square root} of $T$ is defined by
$$T^{1/2}:=T_0^{1/2}\ \hat{\oplus} \ (\{0\} \times \mul(T)),$$
where $T_0^{1/2}$ is the square root of $T_0$  \cite{Bernau}. 
Then, $\mul(T^{1/2})=\mul(T),$  $T_0^{1/2}=(T^{1/2})_0$ and $\cldom(T)=\cldom(T^{1/2})$ \cite[Lemma 2.5]{Hassi}.
 Also, by \eqref{TTLR2},
 \begin{equation} \label{eqker}
 	\ker(T)= \ker(T^{1/2})=	\ker(T_0).
 \end{equation}

There is a natural ordering for nonnegative selfadjoint relations in $\HH.$  For two nonnegative selfadjoint relations $A$ and $B,$ we write $A \leq B$ if
\begin{equation}
\dom(B_0^{1/2}) \subseteq\dom(A_0)^{1/2}  \mbox{ and } \Vert A_0^{1/2} u \Vert \leq  \Vert B_0^{1/2} u \Vert, \mbox{ for all } u \in \dom(B_0^{1/2}). \!
\end{equation}

The following is a result given in \cite[Theorem 3.4]{Hassi}; we include its proof for the sake of completeness. 
\begin{lema} \label{lemalr} Let $A, B$ be nonnegative selfadjoint linear relations such that
	$A \leq B.$ Then, there exists a contraction $W \in L(\cldom(B), \cldom(A))$ such that
\begin{equation} \label{contencion}
WB_0^{1/2} \subset A_0^{1/2}
\end{equation}
where $A_0$ and $B_0$ are the operator parts of $A$ and $B,$ respectively.
\end{lema}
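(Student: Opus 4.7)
The plan is to define $W$ directly on $\ran(B_0^{1/2})$ and then extend it by continuity and by zero to all of $\cldom(B)$. For $v\in\ran(B_0^{1/2})$, pick $u\in\dom(B_0^{1/2})$ with $v=B_0^{1/2}u$; since the hypothesis $A\leq B$ gives $\dom(B_0^{1/2})\subseteq\dom(A_0^{1/2})$, the vector $A_0^{1/2}u$ is defined, and I set $W_0v:=A_0^{1/2}u$.

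The first thing to verify is that $W_0$ is well-defined: if $B_0^{1/2}u_1=B_0^{1/2}u_2$, then $\|A_0^{1/2}(u_1-u_2)\|\leq\|B_0^{1/2}(u_1-u_2)\|=0$, so $A_0^{1/2}u_1=A_0^{1/2}u_2$. The same inequality immediately shows $\|W_0v\|\leq\|v\|$, so $W_0$ is a contraction from $\ran(B_0^{1/2})\subseteq\cldom(B)$ into $\cldom(A)$, and hence extends uniquely by continuity to a contraction, still denoted $W_0$, from $\clran(B_0^{1/2})$ into $\cldom(A)$.

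Now I extend to all of $\cldom(B)$ using the selfadjoint decomposition of $B_0^{1/2}$ in $\cldom(B)$: since $B_0^{1/2}$ is selfadjoint in $\cldom(B)$, one has the orthogonal decomposition
\[
\cldom(B)=\clran(B_0^{1/2})\oplus\ker(B_0^{1/2}).
\]
Define $W$ to agree with $W_0$ on $\clran(B_0^{1/2})$ and to be $0$ on $\ker(B_0^{1/2})$. This yields $W\in L(\cldom(B),\cldom(A))$ with $\|W\|\leq 1$.

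To check \eqref{contencion}, take $(u,v)\in B_0^{1/2}$, so $u\in\dom(B_0^{1/2})$ and $v=B_0^{1/2}u\in\ran(B_0^{1/2})$. By the definition of $W_0$, $Wv=A_0^{1/2}u$; together with $(v,Wv)\in W$ and the product rule for linear relations, this gives $(u,Wv)\in WB_0^{1/2}$ and simultaneously $(u,Wv)=(u,A_0^{1/2}u)\in A_0^{1/2}$, proving $WB_0^{1/2}\subset A_0^{1/2}$. There is no real obstacle here; the only point demanding a little care is noticing that $\cldom(B)$ splits orthogonally as $\clran(B_0^{1/2})\oplus\ker(B_0^{1/2})$, which is what allows $W$ to be defined everywhere on $\cldom(B)$ while keeping the contraction bound.
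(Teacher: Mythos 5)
Your proof is correct and follows essentially the same route as the paper: define $W$ on $\ran(B_0^{1/2})$ by $B_0^{1/2}u \mapsto A_0^{1/2}u$, use the inequality $\Vert A_0^{1/2}u\Vert \leq \Vert B_0^{1/2}u\Vert$ for both well-definedness and the contraction bound, extend by continuity to $\clran(B_0^{1/2})$, and set $W=0$ on the orthogonal complement in $\cldom(B)$. Your explicit identification of that complement as $\ker(B_0^{1/2})$ and your separate well-definedness check are just slightly more detailed versions of what the paper does.
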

\begin{proof} Since $A \leq B,$ $\dom(B_0^{1/2})\subseteq \dom(A_0^{1/2})$ and 
\begin{equation} \label{eqLR1}
\Vert A_0^{1/2} u \Vert \leq \Vert B_0^{1/2} u \Vert,
\end{equation}
for every $u \in \dom(B_0^{1/2}).$
Define the linear relation
$$W:=\{(B_0^{1/2}h,A_0^{1/2}h): h \in \dom(B_0^{1/2})\}.$$ If $(x,y) \in W$ then $(x,y)=(B_0^{1/2}h,A_0^{1/2}h)$ for some $h \in \dom(B_0^{1/2}).$ Then, by \eqref{eqLR1},
$$\Vert y \Vert = \Vert A_0^{1/2}h \Vert \leq \Vert B_0^{1/2}h\Vert=\Vert x \Vert.$$
So that $W$ is a contraction  from $\ran(B_0^{1/2})$ to $\ran(A_0^{1/2}).$ Then $W$ has a unique extension named again $W$ from $\clran(B_0^{1/2}) \subseteq \cldom(B)$ to $\clran(A_0^{1/2}) \subseteq \cldom(A).$ Defining $W$ as zero in $\cldom(B)  \ominus \ran(B_0^{1/2}),$ the result follows. 
\end{proof}


 If $T$ is a linear relation in $\HH \times \KK$ and $\St$ is a subspace of $\dom(T)$ then 
 $$T|_{\St}:=\{(x,y) \in T : x \in \St\} \mbox{ and } T(\mc{S}):=\{y : (x,y) \in T \mbox{ for some } x \in \mc{S} \}.$$ 
 
 A linear subspace $\mc{D}$ of $\dom(T)$ is a \emph{core} of $T$ if the set $T|_{\mc{D}}$ is dense in $T,$ in which case  $\ol{T({\mc{D}})}=\clran{T}.$ If $T$ admits the sum decomposition $T=T_0 \ \hat{\oplus} \ T_{\mul}$ as in \eqref{Tdecom} and $\mc{D}$ is a core of $T_0$ then $\mc{D}$ is a core of $T.$
 If $T$ is a selfadjoint linear relation in $\HH$ and $\mc{D}$ is a core of $T$ then  $(T|_{\mc{D}})^*=T.$
 
\section{Matrix decomposition  of nonnegative selfadjoint relations}
Let $\St$ be a closed subspace of $\HH$ and let $a \subseteq \St \times\St,$ $b  \subseteq \St^{\perp} \times\St,$ $c  \subseteq \St \times\St^{\perp}$ and $d  \subseteq \St^{\perp} \times \St^{\perp}$ be linear relations. In \cite[Definition 5.1]{HassiMR},  the  linear relation in $\HH \times \HH$ \emph{generated} by the blocks $a,$  $b,$ $c$ and $d$ is defined as
$$\matriz{a}{b}{c}{d}:=\left\{ \left(\vecd{x_1}{x_2}, \vecd{w_1+z_1}{w_2+z_2}\right): \vecdm{(x_1,w_1) \in a, (x_2,z_1)\in b}{(x_1,w_2) \in c, (x_2,z_2)\in d}\right\}.$$

On the other hand, given a linear relation $A$ in $\HH$ and $\St$ a closed subspace of $\HH,$ we say that $A$ \emph{admits a $2 \times 2$ block matrix representation with respect to $\St \oplus \St^{\perp}$} if there exist blocks  $a \subseteq \St \times\St,$ $b \subseteq \St^{\perp} \times\St,$ $c  \subseteq \St \times\St^{\perp}$ and $d \subseteq \St^{\perp} \times \St^{\perp}$ such that 
$A=\matriz{a}{b}{c}{d}.$ 
In this case, it is easy to check that:
\begin{enumerate}
\item [1.] $\dom(a)  \cap \dom(c) = \St \cap \dom(A)$ and $\dom(b)  \cap \dom(d) = \St^{\perp} \cap \dom(A).$ 
\item  [2.] $\mul(a)  + \mul(b) = \St \cap \mul(A)$ and $\mul(c)  + \mul(d) = \St^{\perp} \cap \mul(A).$ 
\end{enumerate}

\begin{lema} \label{lemasubs} Let $\M$ and $\St$ be subspaces of $\HH$ with $\St$ closed.
Then the following are equivalent:
\begin{enumerate}
	\item $P_{\St} (\M) \subseteq \M;$
	\item  $\M=\St \cap \M \oplus  \St^{\perp}\cap \M;$
	\item $P_{\St}(\M)=\St \cap \M.$ 
\end{enumerate}
\end{lema}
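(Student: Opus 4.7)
The plan is to prove the chain of implications (i) $\Rightarrow$ (iii) $\Rightarrow$ (ii) $\Rightarrow$ (i), each of which is routine.

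For (i) $\Rightarrow$ (iii), I would note that the inclusion $\St \cap \M \subseteq P_{\St}(\M)$ is automatic since any $y \in \St \cap \M$ satisfies $y = P_{\St} y$. For the reverse inclusion, I would pick $x \in \M$ and observe that under the hypothesis $P_{\St}x \in \M$, while by definition $P_{\St} x \in \St$, so $P_{\St} x \in \St \cap \M$.

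For (iii) $\Rightarrow$ (ii), I would decompose an arbitrary $x \in \M$ as $x = P_{\St}x + (1-P_{\St})x$. The first summand lies in $P_{\St}(\M) = \St \cap \M$ by hypothesis, and then the second summand $x - P_{\St}x \in \M$ (as a difference of elements of $\M$) and lies in $\St^\perp$, hence in $\St^\perp \cap \M$. The reverse inclusion $\St \cap \M + \St^\perp \cap \M \subseteq \M$ is trivial, and the sum is orthogonal since $\St \perp \St^\perp$.

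Finally, for (ii) $\Rightarrow$ (i), any $x \in \M$ decomposes as $x = x_1 + x_2$ with $x_1 \in \St \cap \M$ and $x_2 \in \St^\perp \cap \M$, and uniqueness of the orthogonal decomposition of $x$ in $\HH = \St \oplus \St^\perp$ forces $P_{\St} x = x_1 \in \M$.

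There is no real obstacle here; the only minor point to be careful about is that in (iii) $\Rightarrow$ (ii) one must verify $(1-P_{\St})x \in \M$, which uses that $P_{\St}x$ itself lies in $\M$ (guaranteed by (iii)), not just in $\St$.
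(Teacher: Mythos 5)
Your proof is correct; the paper states this lemma without proof, and your chain (i)~$\Rightarrow$~(iii)~$\Rightarrow$~(ii)~$\Rightarrow$~(i) is exactly the standard elementary argument one would supply. The one subtlety worth flagging --- that $(1-P_{\St})x\in\M$ requires $P_{\St}x\in\M$ and not merely $P_{\St}x\in\St$ --- is handled correctly.
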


\begin{thm}[\normalfont{cf. \cite[Theorem 5.1]{HassiMR}}] \label{theoMLR} Let $A$ be a linear relation in $\HH$ and let $\St$ be a closed subspace of $\HH.$ Then the following are equivalent:
	\begin{enumerate}
		\item $A$ admits a $2 \times 2$ block matrix representation with respect to $\St\oplus \St^{\perp};$
		\item  $P_{\St} (\dom(A)) \subseteq \dom(A)$ and $P_{\St} (\mul(A)) \subseteq \mul(A);$ 
		\item $A$ admits a representation as 
		\begin{equation} \label{Acanon}
		A=\matriz{a}{b}{c}{d},
		\end{equation} where $a:=P_{\St}A|_{\St},$  $b:=P_{\St}A|_{\St^{\perp}},$ $ c:=P_{\St^{\perp}}A|_{\St}$ and  $d:=P_{\St^{\perp}}A|_{\St^{\perp}}.$
	\end{enumerate}
\end{thm}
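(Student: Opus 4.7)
The proof will proceed in the cycle $(iii)\Rightarrow (i) \Rightarrow (ii) \Rightarrow (iii)$.

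The implication $(iii)\Rightarrow (i)$ is immediate, since the stated formula is itself a $2\times 2$ block matrix representation. For $(i)\Rightarrow (ii)$, I would read off both invariances from observations 1 and 2 recorded just before Lemma \ref{lemasubs}: writing $A=\left(\begin{smallmatrix} a&b\\c&d\end{smallmatrix}\right)$, any element $x\in\dom(A)$ decomposes as $x=x_1+x_2$ with $x_1\in\dom(a)\cap\dom(c)\subseteq\St$ and $x_2\in\dom(b)\cap\dom(d)\subseteq\St^\perp$, so $P_\St x=x_1\in\dom(A)$. The same argument applied to $\mul(A)$, together with the decomposition $\mul(A)=(\mul(a)+\mul(b))\oplus(\mul(c)+\mul(d))$, gives $P_\St(\mul(A))\subseteq\mul(A)$.

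For the main implication $(ii)\Rightarrow(iii)$, I would first note that the invariance of $\mul(A)$ under $P_\St$ automatically gives the corresponding invariance under $P_{\St^\perp}$, because $P_{\St^\perp}m=m-P_\St m$ and $\mul(A)$ is a subspace; the same holds for $\dom(A)$. Applying Lemma \ref{lemasubs} then yields the orthogonal decompositions
\[
\dom(A)=(\St\cap\dom(A))\oplus(\St^\perp\cap\dom(A)),\qquad \mul(A)=(\St\cap\mul(A))\oplus(\St^\perp\cap\mul(A)).
\]
Define $a,b,c,d$ by the formulas in $(iii)$. The inclusion $\left(\begin{smallmatrix} a&b\\c&d\end{smallmatrix}\right)\subseteq A$ is the easier direction: if $(x_1,w_1)\in a$ and $(x_1,w_2)\in c$, unfolding the definitions produces $y',y''\in\HH$ with $(x_1,y'),(x_1,y'')\in A$, $P_\St y'=w_1$ and $P_{\St^\perp}y''=w_2$. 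Then $y'-y''\in\mul(A)$, and, by the invariance, $P_{\St^\perp}(y'-y'')\in\mul(A)$; adding this element to $y'$ produces the vector $w_1+w_2$ so $(x_1,w_1+w_2)\in A$. The analogous argument yields $(x_2,z_1+z_2)\in A$, and adding gives $(x_1+x_2,w_1+z_1+w_2+z_2)\in A$.

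The reverse inclusion $A\subseteq\left(\begin{smallmatrix} a&b\\c&d\end{smallmatrix}\right)$ is where I expect the bookkeeping to be heaviest. Given $(x,y)\in A$, split $x=x_1+x_2$; the invariance of $\dom(A)$ gives $x_1,x_2\in\dom(A)$, so pick $(x_1,y_1),(x_2,y_2)\in A$, and set $m:=y-y_1-y_2\in\mul(A)$. Using the invariance of $\mul(A)$, both $P_\St m$ and $P_{\St^\perp}m$ lie in $\mul(A)$. Then the choices
\[
w_1:=P_\St y_1,\ \ z_1:=P_\St y_2+P_\St m,\ \ w_2:=P_{\St^\perp}y_1,\ \ z_2:=P_{\St^\perp}y_2+P_{\St^\perp}m
\]
belong respectively to $a,b,c,d$ (each is obtained from an element of $A$ by projecting and, where needed, adding $(0,P_\St m)$ or $(0,P_{\St^\perp}m)$, both of which lie in the appropriate block because $0\in\St\cap\St^\perp$); and a direct calculation gives $w_1+z_1+w_2+z_2=P_\St y+P_{\St^\perp}y=y$, completing the proof. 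The main obstacle is precisely this step of absorbing the multivalued ``defect'' $m$ into the correct blocks, which is why the invariance hypothesis on $\mul(A)$ is needed in addition to the one on $\dom(A)$.
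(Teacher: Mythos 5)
Your proof is correct. Note that the paper itself gives no argument for this statement --- it is quoted from \cite[Theorem 5.1]{HassiMR} --- so there is nothing to compare against; your direct verification from the definition of the generated block relation is a perfectly good self-contained substitute. All the essential points are in place: the two invariances are exactly what is read off from the identities $\dom(a)\cap\dom(c)=\St\cap\dom(A)$ and $\mul(a)+\mul(b)=\St\cap\mul(A)$; the inclusion $\bigl(\begin{smallmatrix} a&b\\c&d\end{smallmatrix}\bigr)\subseteq A$ does require the $\mul$-invariance to reconcile the two representatives $y'$, $y''$; and the reverse inclusion is correctly handled by absorbing the defect $m=y-y_1-y_2$ into the blocks via $(0,P_\St m)\in b$ and $(0,P_{\St^\perp}m)\in d$. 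One microscopic slip: in the first inclusion you should \emph{subtract} $P_{\St^\perp}(y'-y'')$ from $y'$, since $w_1+w_2=P_\St y'+P_{\St^\perp}y''=y'-P_{\St^\perp}(y'-y'')$; as $\mul(A)$ is a subspace this changes nothing.
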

 

\begin{lema} \label{lemmamul}
Let $A$ be a selfadjoint  linear relation in $\HH$ and let $\St$ be a closed subspace of $\HH.$
If $P_{\St} (\dom(A)) \subseteq \dom(A)$ then $P_{\St} (\mul(A)) \subseteq \mul (A).$ 

\end{lema}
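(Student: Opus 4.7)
The plan is to reduce the assertion about $\mul(A)$ to an assertion about the orthogonal complement of $\dom(A)$, and then use a one-line duality argument based on the selfadjointness of $P_\St$. The key observation is that for any linear relation $T$ the identity $\mul(T^*)=\dom(T)^{\perp}$ recorded in the preliminaries, combined with $A=A^*$, gives $\mul(A)=\dom(A)^{\perp}$. Thus the statement to prove is transformed into
$$P_\St\bigl(\dom(A)^{\perp}\bigr)\subseteq \dom(A)^{\perp}.$$

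To establish this, I would take an arbitrary $x\in\dom(A)^{\perp}$ and any $y\in\dom(A)$ and exploit the selfadjointness of the orthogonal projection:
$$\PI{P_\St x}{y}=\PI{x}{P_\St y}.$$
By hypothesis $P_\St y\in\dom(A)$, so the right-hand side vanishes because $x\perp\dom(A)$. Consequently $P_\St x\in\dom(A)^{\perp}=\mul(A)$, which is exactly what is required.

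I do not anticipate any real obstacle here; the argument is essentially bookkeeping. The only nontrivial ingredient is the identity $\mul(A)=\dom(A)^{\perp}$, and this is available immediately from selfadjointness (and closedness, which selfadjointness entails). Note that selfadjointness cannot be dropped: for a general symmetric relation one only has $\mul(A)\subseteq\dom(A)^{\perp}$, and the above argument would only show $P_\St x\in\dom(A)^{\perp}$, not $P_\St x\in\mul(A)$.
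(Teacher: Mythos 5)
Your argument is correct and coincides with the paper's own proof: both use $\mul(A)=\dom(A)^{\perp}$ (from selfadjointness) and the self\-adjointness of $P_\St$ to show $\PI{P_\St x}{y}=\PI{x}{P_\St y}=0$ for all $y\in\dom(A)$. Nothing to add.
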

\begin{proof} Since $A$ is selfadjoint,  $\mul(A)=\dom(A)^{\perp}.$  Let  $y \in \mul(A).$  Then, for all $h \in \dom(A)$
$$\PI{P_{\St} y}{h}=\PI{y}{P_{\St} h}=0,$$ because $P_{\St} h \in \dom(A).$
Therefore $P_{\St} y \in \dom(A)^{\perp}=\mul(A).$

\end{proof}

Let $A$ be a selfadjoint linear relation in $\HH$ and let $\St$ be a closed subspace of $\HH.$ Define
\begin{align}
\label{D1} \mc{D}_1&:=\St \cap \dom(A), \ \mc{D}_2:=\St^{\perp} \cap \dom(A), \\
\label{M1} \M_1&:=\St\cap \mul(A) \mbox{ and } \M_2:=\St^{\perp}\cap \mul(A).
\end{align} 
If $P_{\St} (\dom(A)) \subseteq \dom(A)$ then, by Lemmas \ref{lemasubs} and  \ref{lemmamul}, 
\begin{equation} \label{dom1}
\dom(A)=\mc{D}_1 \oplus \mc{D}_2 \mbox{ and } 	\mul(A)=\M_1 \oplus \M_2,
\end{equation}
and $A$ admits a $2 \times 2$ block matrix representation with respect to $\St\oplus \St^{\perp}.$

Define $\N_i:=\ol{\mc{D}_i}, \mbox{ for } i=1,2.$
Clearly, $\cldom(A)=\N_1 \oplus \N_2.$

\begin{lema} \label{lemaRMN2} Let $A$ be a selfadjoint  linear relation in $\HH$ and let $\St$ be a closed subspace of $\HH.$ Then, the following are equivalent:
\begin{enumerate}
\item $P_{\St} (\dom(A)) \subseteq \dom(A);$
\item $P_{\N_1}(\dom(A)) \subseteq \dom(A)$ and  $\St=\N_1 \oplus \M_1;$
\item  $P_{\N_2}(\dom(A)) \subseteq \dom(A)$ and  $\St^{\perp}=\N_2 \oplus \M_2.$
\end{enumerate}
In this case, $\N_1=\St\cap \cldom(A)$  and $\N_2=\St^{\perp}\cap \cldom(A).$
\end{lema}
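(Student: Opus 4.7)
The plan is to exploit systematically that $A$ being selfadjoint gives $\mul(A)=\dom(A)^{\perp}$, hence $\cldom(A)=\mul(A)^{\perp}$ and every $x\in\dom(A)$ is orthogonal to $\mul(A)$. The key consequence is that $P_{\M_1}|_{\dom(A)}=0$ (since $\M_1\subseteq\mul(A)$), and symmetrically $P_{\M_2}|_{\dom(A)}=0$. This will allow me to replace $P_{\St}$ by $P_{\N_1}$ on $\dom(A)$ whenever $\St$ splits orthogonally as $\N_1\oplus\M_1$.

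For $(i)\Rightarrow(ii)$ I would start from \eqref{dom1}, which under $(i)$ already gives $\dom(A)=\mc{D}_1\oplus\mc{D}_2$ and $\mul(A)=\M_1\oplus\M_2$. Taking closure in the first identity yields $\cldom(A)=\N_1\oplus\N_2$, so $\HH=(\N_1\oplus\N_2)\oplus(\M_1\oplus\M_2)$. Since $\N_1,\M_1\subseteq\St$ and $\N_2,\M_2\subseteq\St^{\perp}$, uniqueness of the orthogonal decomposition $\HH=\St\oplus\St^{\perp}$ forces $\St=\N_1\oplus\M_1$ and $\St^{\perp}=\N_2\oplus\M_2$. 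Then $P_{\St}=P_{\N_1}+P_{\M_1}$, and for every $x\in\dom(A)$ the key observation yields $P_{\N_1}x=P_{\St}x\in\dom(A)$.

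The reverse $(ii)\Rightarrow(i)$ is immediate from $P_{\St}=P_{\N_1}+P_{\M_1}$ together with $P_{\M_1}|_{\dom(A)}=0$: for $x\in\dom(A)$, $P_{\St}x=P_{\N_1}x\in\dom(A)$. For $(i)\Leftrightarrow(iii)$, the identity $P_{\St^{\perp}}=1-P_{\St}$ and linearity of $\dom(A)$ show that $(i)$ is equivalent to $P_{\St^{\perp}}(\dom(A))\subseteq\dom(A)$; one then repeats the $(i)\Leftrightarrow(ii)$ argument verbatim with the roles of $\St,\mc{D}_1,\N_1,\M_1$ and $\St^{\perp},\mc{D}_2,\N_2,\M_2$ interchanged.

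For the final assertion, $\N_1\subseteq\St\cap\cldom(A)$ is immediate since $\N_1\subseteq\St$ and $\N_1=\ol{\mc{D}_1}\subseteq\cldom(A)$. For the reverse inclusion, take $x\in\St\cap\cldom(A)=\St\cap\mul(A)^{\perp}$; using $\St=\N_1\oplus\M_1$ write $x=n_1+m_1$ with $m_1\in\M_1\subseteq\mul(A)$, and then $x\perp m_1$ forces $m_1=0$, so $x=n_1\in\N_1$. The identity $\N_2=\St^{\perp}\cap\cldom(A)$ follows by the symmetric argument. No step is a serious obstacle; the only place that can go wrong is forgetting in $(ii)\Rightarrow(i)$ to invoke selfadjointness to secure $P_{\M_1}|_{\dom(A)}=0$, without which the added information in $(ii)$ would not suffice.
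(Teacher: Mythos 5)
Your proof is correct and follows essentially the same route as the paper: both arguments rest on selfadjointness giving $\mul(A)=\dom(A)^{\perp}$, the splitting $\dom(A)=\mc{D}_1\oplus\mc{D}_2$, $\mul(A)=\M_1\oplus\M_2$ from \eqref{dom1}, and the resulting orthogonal decomposition of $\HH$ to force $\St=\N_1\oplus\M_1$. Your derivation of $\St=\N_1\oplus\M_1$ via $\HH=(\N_1\oplus\M_1)\oplus(\N_2\oplus\M_2)$ and your proof of $\N_1=\St\cap\cldom(A)$ using $\St=\N_1\oplus\M_1$ rather than $\cldom(A)=\N_1\oplus\N_2$ are only cosmetic variants of the paper's steps.
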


\begin{proof}  $(\mathit{i}\kern0.5pt) \Leftrightarrow (\mathit{ii}\kern0.5pt)$: If $P_{\St} (\dom(A)) \subseteq \dom(A)$ then by \eqref{dom1}, $\cldom(A)=\N_1 \oplus \N_2,$ 
$\mc{D}_1=\N_1 \cap \dom(A)$ and $\mc{D}_2=\N_2 \cap \dom(A).$ 
Therefore 
\begin{equation}\label{Ndom}
\dom(A)=\N_1 \cap \dom(A) \oplus \N_2 \cap \dom(A).
\end{equation}
Hence  $P_{\N_1}(\dom(A))= \mc{D}_1\subseteq \dom(A).$ 

Also $\cldom(A) \subseteq (\St \ominus \N_1)^{\perp}$ or, equivalently,  $\St \ominus \N_1 \subseteq \mul(A).$ In fact, 
$(\St \ominus \N_1)^{\perp} = \St^{\perp} \oplus \N_1 \supseteq \N_2 \oplus \N_1 = \cldom(A).$
Hence
$$\St = \N_1 \oplus (\St \ominus \N_1) \subseteq \N_1 \oplus (\St \cap \mul(A))=\N_1 \oplus \M_1 \subseteq \St.$$
Conversely, suppose that $P_{\N_1}(\dom(A)) \subseteq \dom(A)$ and  $\St=\N_1 \oplus \M_1.$ Then $P_{\St}=P_{\N_1} + P_{\M_1}.$ Since $\dom(A) \subseteq \mul(A)^{\perp} \subseteq \M_1^{\perp},$ it follows that
$$P_{\St} (\dom(A))=(P_{\N_1} + P_{\M_1})(\dom(A))= P_{\N_1}(\dom(A))\subseteq \dom(A).$$

$(\mathit{i}\kern0.5pt) \Leftrightarrow (\mathit{iii}\kern0.5pt)$: It follows as $(\mathit{i}\kern0.5pt) \Leftrightarrow (\mathit{ii}\kern0.5pt)$ using that $P_{\St^{\perp}}(\dom(A)) \subseteq \dom(A).$

In this case,  $\N_1=\St\cap \cldom(A).$ The inclusion $\N_1 =\ol{\St \cap \dom(A)} \subseteq  \St\cap \cldom(A)$ always holds. Conversely, if $x \in  \St\cap \cldom(A)$  write $x=x_1+x_2,$ with $x_1 \in \N_1$ and $x_2 \in  \N_2.$ Then $x_2=x-x_1 \in \St \cap \St^{\perp}.$ So that $x_2 =0.$ Likewise, $\N_2=\St^{\perp}\cap \cldom(A).$
\end{proof}

Now, suppose that the selfadjoint linear relation $A$ is written as
\begin{equation} \label{A0}
A=A_0 \ \hat{\oplus} \ A_{\mul},
\end{equation} 
where $A_0$ is the selfadjoint operator part of $A$ in $\cldom(A).$ 

\begin{prop} \label{propMDLR} Let  $A$ be a selfadjoint linear relation in $\HH,$ let $\St$ be a closed subspace of $\HH$ and suppose that $A$ is written as in \eqref{A0}. Then $A$ admits a $2 \times 2$ block matrix representation with respect to $\St\oplus \St^{\perp}$ if and only if $A_0$ admits a $2 \times 2$ block matrix representation with respect to $\N_1 \oplus \N_2$ and $\St=\N_1 \oplus \M_1,$ where $\N_1=\ol{\mc{D}_1},$ $\N_2=\ol{\mc{D}_2},$ and $\mc{D}_1, \mc{D}_2 $ and $\mc{M}_1$ are defined as in \eqref{D1} and \eqref{M1}.
\end{prop}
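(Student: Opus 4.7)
The plan is to chain together Theorem~\ref{theoMLR} and Lemma~\ref{lemaRMN2}, applied first to $A$ on $\HH$ and then to its operator part $A_0$ on $\cldom(A)$.

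First, since $A$ is selfadjoint, Lemma~\ref{lemmamul} tells us that $P_\St(\dom(A))\subseteq\dom(A)$ automatically forces $P_\St(\mul(A))\subseteq\mul(A)$. Hence by Theorem~\ref{theoMLR}, $A$ admits a $2\times 2$ block matrix representation with respect to $\St\oplus\St^\perp$ if and only if $P_\St(\dom(A))\subseteq\dom(A)$. By Lemma~\ref{lemaRMN2} this in turn is equivalent to $P_{\N_1}(\dom(A))\subseteq\dom(A)$ together with $\St=\N_1\oplus\M_1$.

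Second, I would regard $A_0$ as a linear relation on the Hilbert space $\cldom(A)=\N_1\oplus\N_2$. Since $A=A_0\,\hat\oplus\,A_{\mul}$, we have $\dom(A_0)=\dom(A)$ and $\mul(A_0)=\{0\}$, so the multivalued hypothesis in Theorem~\ref{theoMLR}(ii) is vacuous for $A_0$. Applying that theorem to $A_0$ with the orthogonal decomposition $\cldom(A)=\N_1\oplus\N_2$ then yields: $A_0$ admits a $2\times 2$ block matrix representation with respect to $\N_1\oplus\N_2$ if and only if $P_{\N_1}(\dom(A))\subseteq\dom(A)$.

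Combining the two equivalences gives precisely the claimed statement. The only point requiring mild care is that the orthogonal projection $P_{\N_1}$ has the same meaning whether viewed on $\HH$ or on $\cldom(A)$; this is immediate from $\N_1\subseteq\cldom(A)$ and $\dom(A)\subseteq\cldom(A)$. I do not anticipate a substantive obstacle, since the proposition is essentially a repackaging of the preceding lemmas; the key observation is simply that the selfadjointness of $A$ lets one eliminate the multivalued hypothesis on both sides of the equivalence.
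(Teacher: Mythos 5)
Your proof is correct and follows essentially the same route as the paper's: both directions are obtained by applying Theorem~\ref{theoMLR} to $A$ and to $A_0$ and translating between the two projection conditions via Lemma~\ref{lemaRMN2}, with Lemma~\ref{lemmamul} (used implicitly in the paper) disposing of the multivalued-part hypothesis. No substantive difference to report.
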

\begin{proof} If $A$ admits a $2 \times 2$ block matrix representation with respect to $\St\oplus \St^{\perp},$ by Theorem \ref{theoMLR},
	$P_{\St} (\dom(A)) \subseteq \dom(A).$ Then, by Lemma \ref{lemaRMN2}, equation \eqref{Ndom} follows and $P_{\N_1 {\mathbin{/\mkern-3mu/}} \N_2}(\dom(A_0)) \subseteq \dom(A_0),$ where $P_{\N_1 {\mathbin{/\mkern-3mu/}} \N_2}$ is the orthogonal projection onto $\N_1$ in $L(\cldom(A_0)).$ Therefore, by Theorem \ref{theoMLR} the linear operator $A_0$ admits a $2 \times 2$ block matrix representation (in $\cldom(A_0)$) with respect to $\N_1 \oplus \N_2$ and, by Lemma \ref{lemaRMN2},  $\St=\N_1 \oplus \M_1.$ Conversely, if the linear operator $A_0$ admits a $2 \times 2$ block matrix representation with respect to $\N_1 \oplus \N_2,$ by Theorem \ref{theoMLR}, $P_{\N_1 {\mathbin{/\mkern-3mu/}} \N_2}(\dom(A_0)) \subseteq \dom(A_0).$ So that, by Lemma \ref{lemasubs}, equation \eqref{Ndom} follows. Then, $P_{\N_1} (\dom(A)) \subseteq \dom(A)$ and, since  $\St=\N_1 \oplus \M_1,$ by Lemma \ref{lemaRMN2}, $P_{\St} (\dom(A)) \subseteq \dom(A).$ Hence, by Theorem \ref{theoMLR}, $A$ admits a $2 \times 2$ block matrix representation with respect to $\St\oplus \St^{\perp}.$ 	
\end{proof}

\begin{cor} \label{corMDLR} Let  $A$ be a selfadjoint linear relation in $\HH,$ let $\St$ be a closed subspace of $\HH$ such that $P_\St(\dom(A)) \subseteq \dom(A)$ and suppose that $A$ is written as in \eqref{A0}. 
	
If $A_0$ admits the representation with respect to $\N_1 \oplus \N_2,$ $A_0=\matriz{a_0}{b_0}{c_0}{d_0},$ then $A$ admits the representation with respect to $\St\oplus \St^{\perp},$
$$A=\matriz{a_0 \ \hat{\oplus } \ (\{0\} \times \M_1')}{b_0 \ \hat{\oplus } \ (\{0\} \times \M_1'')}{c_0 \ \hat{\oplus } \ (\{0\} \times \M_2')}{d_0 \ \hat{\oplus } \ (\{0\} \times \M_2'')},$$ where $\M_1', \M_1''$ are subspaces of $\St$ and  $\M_2', \M_2''$ are subspaces of  $\St^{\perp}$ such that $\M_1'+\M_1''=\M_1$ and $\M_2'+\M_2''=\M_2.$

Conversely, if $A$ admits the representation with respect to $\St\oplus \St^{\perp},$
$
A=\matriz{a}{b}{c}{d},
$ then $A_0$ admits the representation with respect to $\N_1 \oplus \N_2,$
$$
A_0=\matriz{P_{\N_1}a}{P_{\N_1}b}{P_{\N_2}c}{P_{\N_2}d}.
$$
\end{cor}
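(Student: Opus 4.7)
The plan is to exploit the orthogonal decompositions supplied by the hypotheses and then verify each implication by direct computation against the matrix definition.

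For the forward direction, recall that Lemma \ref{lemaRMN2} gives $\St = \N_1 \oplus \M_1$ and $\St^\perp = \N_2 \oplus \M_2$ (the sums are orthogonal in view of $\cldom(A) \perp \mul(A)$), together with $\dom(A) = \mc{D}_1 \oplus \mc{D}_2$ and $\mul(A) = \M_1 \oplus \M_2$. Take an arbitrary $(x, y) \in A = A_0 \ \hat{\oplus} \ A_{\mul}$, write $x = x_1 + x_2$ with $x_i \in \mc{D}_i$, and $y = A_0 x + m_1 + m_2$ with $m_i \in \M_i$. The given representation of $A_0$ expands $A_0 x = a_0 x_1 + b_0 x_2 + c_0 x_1 + d_0 x_2$, with the first two terms in $\N_1 \subseteq \St$ and the last two in $\N_2 \subseteq \St^\perp$. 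Choose any splittings $\M_1 = \M_1' + \M_1''$ and $\M_2 = \M_2' + \M_2''$ and decompose $m_i = m_i' + m_i''$ accordingly. Then the four pairs $(x_1, a_0 x_1 + m_1')$, $(x_2, b_0 x_2 + m_1'')$, $(x_1, c_0 x_1 + m_2')$, $(x_2, d_0 x_2 + m_2'')$ lie in the four prescribed blocks, and their matrix recombination returns exactly $(x, y)$; running the argument backwards gives the reverse inclusion. The use of $\hat{\oplus}$ in each block is legitimate because $\N_i \perp \M_i$.

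For the converse, the first task is to show that $P_{\N_1} a$, $P_{\N_1} b$, $P_{\N_2} c$, $P_{\N_2} d$ are single-valued. Setting $x_1 = x_2 = 0$ in the matrix for $A$ shows that each of $\mul(a), \mul(b)$ lies in $\St \cap \mul(A) = \M_1$ and each of $\mul(c), \mul(d)$ in $\M_2$; since $\N_i \perp \M_i$, the projections $P_{\N_1}$ and $P_{\N_2}$ annihilate these multivalued parts. Now take $(u, v) \in A$ with $u = u_1 + u_2 \in \mc{D}_1 \oplus \mc{D}_2$ and decompose $v = w_1 + z_1 + w_2 + z_2$ as prescribed by the matrix for $A$. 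On the one hand, because $w_2, z_2 \in \St^\perp \perp \N_1$, one gets $P_{\N_1} v = P_{\N_1} w_1 + P_{\N_1} z_1 = (P_{\N_1} a) u_1 + (P_{\N_1} b) u_2$. On the other hand, $v = A_0 u + m$ with $m \in \mul(A) \perp \cldom(A) \supseteq \N_1$, so $P_{\N_1} v = P_{\N_1} A_0 u$, which is the $\N_1$-component of the operator value $A_0 u \in \N_1 \oplus \N_2$. A symmetric computation with $P_{\N_2}$ identifies the second row, yielding the claimed matrix for $A_0$.

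The main obstacle is simply bookkeeping: one must keep track of several interlocking orthogonal decompositions of $\HH$ (by $\St / \St^\perp$, by $\cldom(A) / \mul(A)$, and inside $\St$ and $\St^\perp$ by $\N_i / \M_i$) and verify that projecting $v = A_0 u + m$ onto each $\N_i$ splits cleanly across the $A_0$-part and the multivalued part. The non-uniqueness of the matrix for $A$ then manifests exactly as the freedom in choosing the splittings $\M_i = \M_i' + \M_i''$.
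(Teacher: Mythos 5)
Your argument is correct and follows essentially the same route as the paper: everything is decomposed along $\dom(A)=\mc{D}_1\oplus\mc{D}_2$, $\mul(A)=\M_1\oplus\M_2$, $\St=\N_1\oplus\M_1$ and $\St^{\perp}=\N_2\oplus\M_2$, and the block identities are verified directly. The only difference is cosmetic: in the converse you establish the inclusion of $A_0$ \emph{into} the projected block matrix by computing $P_{\N_1}v$ and $P_{\N_2}v$ for $(u,v)\in A$, whereas the paper proves the reverse inclusion and then invokes equality of domains; either direction closes the argument once one notes, as you implicitly do, that both relations are operators with common domain $\mc{D}_1\oplus\mc{D}_2$.
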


\begin{proof} 
Suppose that $A_0$ admits the representation with respect to $\N_1 \oplus \N_2$
$$A_0=\matriz{a_0}{b_0}{c_0}{d_0}.$$

Set $a:=a_0 \ \hat{\oplus } \ \{ \{0\} \times \M_1' \},$ $b:=b_0 \ \hat{\oplus } \ (\{0\} \times \M_1''),$ $c:=c_0 \ \hat{\oplus } \ (\{0\} \times \M_2'),$ $d:=d_0 \ \hat{\oplus } \ (\{0\} \times \M_2'').$  Since $\M_1', \M_1'' \subseteq \M_1,$ $\M_2', \M_2'' \subseteq \M_2,$ $\St=\N_1\oplus \M_1$ and $\St^{\perp}=\N_2 \oplus \M_2,$ it is clear that $a \subseteq \St \times\St,$ $b  \subseteq \St^{\perp} \times\St,$ $c  \subseteq \St \times\St^{\perp}$ and $d \subseteq \St^{\perp} \times \St^{\perp}.$ 
Also,
\begin{align*}
\dom \matriz{a}{b}{c}{d}&=\dom(a) \cap \dom(c) \oplus \dom(b) \cap \dom(d)\\
&=\dom(a_0) \cap \dom(c_0) \oplus \dom(b_0) \cap \dom(d_0)\\
&=\N_1\cap \dom(A) \oplus \N_2 \cap \dom(A)\\
&=\mc{D}_1 \oplus \mc{D}_2 =\dom(A), 
\end{align*}
and
\begin{align*}
\mul\matriz{a}{b}{c}{d}&= \mul(a) + \mul(b) \oplus \mul(c) + \mul(d)\\
&=\M_1'+\M_1'' \oplus \M_2'+\M_2''=\M_1 \oplus \M_2= \mul(A).
\end{align*}

Let $(x,y) \in A=A_0 \ \hat{\oplus} \  (\{0\} \times \mul(A)).$ Then there exists $m \in \mul(A)$ such that $(x,y)=(x,A_0x)+(0,m).$
Then $x=x_1+x_2$ for some $x_1 \in \mc{D}_1$ and $x_2 \in \mc{D}_2$ and $m=m_1+m_2$ for some $m_1 \in \M_1$ and $m_2 \in \M_2.$  Since $m_1 \in \M_1$ and $m_2 \in \M_2,$ there exist $m_1' \in \M_1',$ $m_1'' \in \M_1'',$ $m_2' \in \M_2'$ and $m_2'' \in \M_2''$ such that
$m_1=m_1'+m_1''$ and $m_2=m_2'+m_2''.$ Then
\begin{align*}
	(x,y)&=(x,A_0x)+(0,m)=\left(\vecd{x_1}{x_2}, \matriz{a_0}{b_0}{c_0}{d_0} \vecd{x_1}{x_2}\right)+\left(0, \vecd{m_1}{m_2}\right)\\
	&=\left(\vecd{x_1}{x_2}, \vecd{a_0x_1+b_0x_2+m_1}{c_0x_1+d_0x_2+m_2}\right)\\
	&=\left(\vecd{x_1}{x_2}, \vecd{a_0x_1+b_0x_2+m_1'+m_1''}{c_0x_1+d_0x_2+m_2'+m_2''}\right).
\end{align*}
Now, since $(x_1, a_0x_1+m_1') =(x_1,a_0x_1)+(0,m_1')\in a,$ $(x_2,b_0x_2+m_1'') = (x_2,b_0x_2)+(0,m_1'')\in b,$ $(x_1,c_0x_1+m_2')=(x_1,c_0x_1)+(0,m_2') \in c$ and $(x_2,d_0x_2+m_2'')=(x_2,d_0x_2)+(0,m_2'') \in d,$ it follows that
$(x,y)\in \matriz{a}{b}{c}{d}.$
Hence, $A \subset \matriz{a}{b}{c}{d}$ and, since $\dom(A)=\dom\matriz{a}{b}{c}{d}$ and $\mul(A)=\mul\matriz{a}{b}{c}{d},$ by \cite[Corollary 2.2]{Hassi}, $A= \matriz{a}{b}{c}{d}.$ 

Conversely, suppose that $A$ is represented as $A=\matriz{a}{b}{c}{d}.$ 
Set  $a_0:=P_{\N_1}a,$ $b_0:=P_{\N_1}b,$ $c_0:=P_{\N_2}c$ and $d_0:=P_{\N_2}d.$ 
Then $a_0$ is an operator in $\N_1.$ In fact, if $(0,y) \in a_0,$ then there exists $z \in \St$ such that $(0,z) \in a$ and $y=P_{\N_1}z.$ 
Therefore, $z\in \mul(a) \subseteq \M_1 \perp \N_1$  and then $y=0.$ Analogously, $b_0,$ $c_0$  and $d_0$ are operators.
Also,
\begin{align*}
\dom \matriz{a_0}{b_0}{c_0}{d_0}&=\dom(a_0) \cap \dom(c_0) \oplus \dom(b_0) \cap \dom(d_0)\\
&= \dom(a) \cap \dom(c) \oplus \dom(b) \cap \dom(d)\\
&=\mc{D}_1 \oplus 
\mc{D}_2=\dom(A_0).
\end{align*}

Let $(x,y) \in \matriz{a_0}{b_0}{c_0}{d_0}.$ Then $x=x_1+x_2 \in \mc{D}_1 \oplus \mc{D}_2 \subseteq \cldom(A)$ and $y=\vecd{a_0x_1+b_0x_2}{c_0x_1+d_0x_2}\in \N_1 \oplus \N_2=\cldom(A).$ 

Set $w_1:=a_0x_1$ and $z_1:=b_0x_2.$  Then $(x_1,w_1) \in a_0=P_{\N_1}a$ and $(x_2,z_1) \in b_0=P_{\N_1}b.$ Then, there exists $s_1 \in \St$  such that $(x_1, s_1)\in a$ and $w_1=P_{\N_1}s_1,$ and there exists $t_1 \in \St$ such that $(x_2, t_1)\in b$ and $z_1=P_{\N_1}t_1.$ Recall that $\St=\N_1 \oplus \M_1$ then $P_{\N_1}+P_{\M_1}=P_{\St}$ so that $$w_1=P_{\N_1}s_1=s_1-P_{\M_1}s_1 \mbox { and } z_1=P_{\N_1}t_1=t_1-P_{\M_1}t_1.$$ Hence, since $P_{\M_1}s_1+P_{\M_1}t_1 \in \M_1 =\mul(a)+\mul(b),$ there exist $m_1 \in \mul(a)$ and $n_1 \in\mul(b)$ such that $P_{\M_1}s_1+P_{\M_1}t_1 =m_1+n_1.$ Then $(0,m_1) \in a$ and $(0,n_1) \in b.$ Therefore $w_1+z_1=(s_1-m_1)+(t_1-n_1)$ and 
$$(x_1,s_1-m_1) = (x_1,s_1) - (0, m_1) \in a \mbox{ and } (x_2,t_1-n_1) = (x_2,t_1) - (0, n_1) \in b.$$ 
Similarly, set $w_2:=c_0x_1$ and $z_2:=d_0x_2.$ Then, there exist $s_2, t_2 \in \St^{\perp},$ $m_2 \in \mul(c)$ and $n_2 \in \mul(d)$ such that $w_2+z_2=(s_2-m_2)+(t_2-n_2)$, $(x_1,s_2-m_2) \in c$  and $(x_2,t_2-n_2) \in d .$ 
Therefore,
$$(x,y)=\left(\vecd{x_1}{x_2}, \vecd{w_1+z_1}{w_2+z_2}\right)=\left(\vecd{x_1}{x_2}, \vecd{(s_1-m_1)+(t_1-n_1)}{(s_2-m_2)+(t_2-n_2)}\right) \in A.$$
Hence, $(x,y) \in A \cap (\cldom(A) \times \cldom(A))=A_0.$ Then,  $\matriz{a_0}{b_0}{c_0}{d_0} \subset A_0$ and, since  $\dom \matriz{a_0}{b_0}{c_0}{d_0} =\dom(A_0),$ it follows that $A_0=\matriz{a_0}{b_0}{c_0}{d_0}.$
\end{proof}



\begin{cor} \label{corMDLR3} Let  $A$ be a selfadjoint linear relation in $\HH,$ let $\St$ be a closed subspace of $\HH$ such that $P_\St(\dom(A)) \subseteq \dom(A)$ and suppose that $A$ admits the representation with respect to $\St\oplus \St^{\perp},$
$A=\matriz{a}{b}{c}{d}.$ If $\dom(a) \subseteq \dom(c)$ and $\mul(b) \subseteq \mul (a)$ then
	$$
	a=P_{\N_1}a \ \hat{\oplus } \ (\{0\} \times \mul(a)).
	$$
	Similar results can be stated for $b, c$ and $d.$
\end{cor}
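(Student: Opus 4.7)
The plan is to verify the claimed componentwise-orthogonal decomposition of $a$ by a direct set-equality argument, exploiting Properties 1--2 of the block matrix representation of $A$ together with the decomposition $\St = \N_1 \oplus \M_1$ which Lemma~\ref{lemaRMN2} provides under the standing assumption $P_\St(\dom(A)) \subseteq \dom(A)$.

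First I would record the preparatory facts. Since $A$ is selfadjoint, $\mul(A) = \dom(A)^{\perp}$, so $\M_1 \perp \N_1$. Property~2 of the block representation gives $\mul(a) + \mul(b) = \St \cap \mul(A) = \M_1$; combining this with the hypothesis $\mul(b) \subseteq \mul(a)$ and the routine inclusion $\mul(a) \subseteq \M_1$ yields the critical equality $\mul(a) = \M_1$. Moreover $P_{\N_1} a$ is an operator: if $(0, y) \in P_{\N_1} a$ then $y = P_{\N_1} z$ for some $z \in \mul(a) \subseteq \M_1 \perp \N_1$, forcing $y = 0$. Orthogonality of $P_{\N_1} a$ and $\{0\} \times \mul(a)$ as subspaces of $\HH \times \HH$ is then immediate from $\ran(P_{\N_1} a) \subseteq \N_1 \perp \M_1 = \mul(a)$, so the right-hand side of the claimed identity is indeed a legitimate $\hat{\oplus}$ sum.

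The main step is the set equality. For $a \subseteq P_{\N_1} a \ \hat{+} \ (\{0\} \times \mul(a))$, I would take $(x, y) \in a$, decompose $y = P_{\N_1} y + P_{\M_1} y$ via $\St = \N_1 \oplus \M_1$, note that $P_{\M_1} y \in \M_1 = \mul(a)$, and write $(x, y) = (x, P_{\N_1} y) + (0, P_{\M_1} y)$, with the first summand in $P_{\N_1} a$ by definition and the second in $\{0\} \times \mul(a)$. For the reverse inclusion, given $(x, z+m)$ with $(x, z) \in P_{\N_1} a$ and $m \in \mul(a)$, I would pick a witness $y \in \St$ such that $(x, y) \in a$ and $z = P_{\N_1} y$; since $y - z = P_{\M_1} y \in \mul(a)$, one has $(0, y-z) \in a$, so $(x, z) = (x, y) - (0, y-z) \in a$ and finally $(x, z + m) \in a$.

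The only genuine obstacle is upgrading $P_{\M_1} y$ from a vector in $\M_1 = \mul(a) + \mul(b)$ to a vector in $\mul(a)$; this is precisely where the hypothesis $\mul(b) \subseteq \mul(a)$ is indispensable. The hypothesis $\dom(a) \subseteq \dom(c)$ does not seem to intervene in the argument for $a$ itself, but its role becomes clear when formulating the symmetric statements for $c$ (and, after exchanging the roles of $\St$ and $\St^{\perp}$, for $b$ and $d$) referred to in the last sentence of the corollary.
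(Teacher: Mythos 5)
Your proof is correct, but it takes a genuinely different and more elementary route than the paper. The paper first invokes Corollary \ref{corMDLR} to rewrite $A$ with the blocks $\tilde a=P_{\N_1}a \ \hat{\oplus}\ (\{0\}\times\mul(a))$, $\tilde b$, $\tilde c$, $\tilde d$; then, for $(x,y)\in a$, it uses $\dom(a)\subseteq\dom(c)$ to produce a $y'$ with $(x,y')\in c$ so that the pair embeds into an element of $A$, re-reads that element in the tilde representation, absorbs the $\tilde b$-component into $\tilde a$ via $\mul(b)\subseteq\mul(a)$, and closes with the criterion of \cite[Corollary 2.2]{Hassi} (equal domains and multivalued parts plus an inclusion force equality). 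You instead never leave the block $a$: you extract the single fact $\mul(a)=\M_1$ from $\mul(a)+\mul(b)=\M_1$ and $\mul(b)\subseteq\mul(a)$, and then prove both inclusions directly using the orthogonal splitting $\St=\N_1\oplus\M_1$ from Lemma \ref{lemaRMN2}. Both inclusions check out (in particular $P_{\N_1}a\subseteq a$ and $\{0\}\times\mul(a)\subseteq a$ give the reverse containment since $a$ is a linear subspace), and your orthogonality verification legitimizes the $\hat{\oplus}$. What your argument buys is self-containedness and the observation that $\dom(a)\subseteq\dom(c)$ is actually superfluous for the conclusion about $a$; what the paper's route buys is uniformity, since the same computation handles all four blocks at once through the already-established representation. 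One small imprecision: your closing remark that $\dom(a)\subseteq\dom(c)$ becomes essential for the analogues for $b$, $c$, $d$ is not borne out by your own method, under which each block needs only the corresponding multivalued-part inclusion ($\mul(a)\subseteq\mul(b)$ for $b$, $\mul(d)\subseteq\mul(c)$ for $c$, $\mul(c)\subseteq\mul(d)$ for $d$) and no domain hypothesis; this does not affect the validity of your proof of the stated claim.
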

\begin{proof} By Corollary \ref{corMDLR}, $A$ admits the representation with respect to $\St\oplus \St^{\perp},$
$$A=\matriz{P_{\N_1}a \ \hat{\oplus } \ (\{0\} \times \mul(a))}{P_{\N_1}b \ \hat{\oplus } \ (\{0\} \times \mul(b))}{P_{\N_2}c \ \hat{\oplus } \ (\{0\} \times \mul(c))}{P_{\N_2}d \ \hat{\oplus } \ (\{0\} \times \mul(d))}.$$ Set  $\tilde{a}:=P_{\N_1}a \ \hat{\oplus } \ (\{0\} \times \mul(a)),$ $\tilde{b}:=P_{\N_1}b \ \hat{\oplus } \ (\{0\} \times \mul(b)),$ $\tilde{c}:=P_{\N_2}c \ \hat{\oplus } \ (\{0\} \times \mul(c))$ and $\tilde{d}:=P_{\N_2}d \ \hat{\oplus } \ (\{0\} \times \mul(d)).$

Clearly, $\dom(a)=\dom(\tilde{a})$ and $\mul(a)=\mul(\tilde{a}).$ Let $(x,y) \in a$ then there exists $y' \in \St^{\perp}$ such that $(x,y') \in c$ because $\dom(a)\subseteq \dom(c).$ So that
$$(x,y)=\left(\vecd{x}{0}, \vecd{y+0}{y'+0}\right) \in A=\matriz{\tilde{a}}{\tilde{b}}{\tilde{c}}{\tilde{d}}.$$ Then
$(x,y)=\left(\vecd{x}{0}, \vecd{y}{y'}\right) =\left(\vecd{x}{0}, \vecd{w+z}{w'+z'}\right)$ with $(x,w) \in \tilde{a},$ $(x,w') \in 
\tilde{c},$  $(0,z)\in \tilde{b}$  and $(0,z') \in \tilde{d}.$ 

Then $(0,z) \in \mul(\tilde{b})=\mul(b) \subseteq \mul(a)=\mul(\tilde{a})$ so that, $(0,z) \in \tilde{a}.$
Hence $$(x,y)=(x,w+z)=(x,w)+(0,z) \in  \tilde{a}.$$ Then $a \subseteq \tilde{a}$ and since $\dom(a)=\dom(\tilde{a})$ and $\mul(a)=\mul(\tilde{a}),$ by \cite[Corollary 2.2]{Hassi}, $a=\tilde{a}=P_{\N_1}a \ \hat{\oplus } \ (\{0\} \times \mul(a)).$ The analogous results for $b, c$ and $d$  follow in a similar way.
\end{proof}


Next we focus on describing the matrix decompositions of nonnegative selfadjoint linear relations (operators).

The following lemmas are needed for the proof of Proposition \ref{proprepreA}.

\begin{lema} \label{lemapreA} Let $A$ be a selfadjoint linear relation in $\HH$ and let $\St$ be a closed subspace of $\HH$ such that $P_{\St} (\dom(A)) \subseteq \dom(A).$ Consider the matrix representation of $A$ as in \eqref{Acanon}. 
Then $a$ and $d$ are symmetric
linear relations, $c \subset b^{*}$ and $a, b, c$ and $d$ are decomposable linear relations with (unique) decompositions: $a=P_{\N_1}a \ \hat{\oplus } \ (\{0\} \times \M_1),$ $b=P_{\N_1}b \ \hat{\oplus } \ (\{0\} \times \M_1),$ $c=P_{\N_2}c \ \hat{\oplus } \ (\{0\} \times \M_2)$ and $d=P_{\N_2}d \ \hat{\oplus } \ (\{0\} \times \M_2).$
\end{lema}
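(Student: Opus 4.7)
The plan is to verify the four assertions directly from the formulas $a = P_\St A|_\St$, $b = P_\St A|_{\St^\perp}$, $c = P_{\St^\perp} A|_\St$, $d = P_{\St^\perp} A|_{\St^\perp}$ furnished by Theorem \ref{theoMLR}, using only the selfadjointness $A = A^*$ together with the orthogonal decompositions $\St = \N_1 \oplus \M_1$ and $\St^\perp = \N_2 \oplus \M_2$ given by Lemma \ref{lemaRMN2}.

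For the symmetry of $a$, I would pick $(x_i, y_i) \in a$ for $i=1,2$; by the definition of $P_\St A|_\St$ there exist $w_i$ with $(x_i, w_i) \in A$ and $y_i = P_\St w_i$. Since $x_j \in \St$ one has $\PI{y_i}{x_j} = \PI{P_\St w_i}{x_j} = \PI{w_i}{x_j}$, and the identity $\PI{w_1}{x_2} = \PI{x_1}{w_2}$ from $A = A^*$ gives $\PI{y_1}{x_2} = \PI{x_1}{y_2}$, i.e.\ $a \subset a^*$; the identical argument with $P_{\St^\perp}$ handles $d$. The inclusion $c \subset b^*$ is essentially the same calculation spread across two blocks: for $(x,y) \in c$ and $(u,v) \in b$, write $y = P_{\St^\perp} w$ and $v = P_\St w'$ with $(x,w), (u,w') \in A$; then $\PI{v}{x} = \PI{w'}{x} = \PI{u}{w} = \PI{u}{y}$, which places $(x,y)$ in $b^*$.

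For decomposability I would begin by identifying the multivalued parts. Since $(0,y) \in a$ is equivalent to $y = P_\St \mu$ for some $\mu \in \mul(A) = \M_1 \oplus \M_2$, one gets $\mul(a) = \M_1$; the analogous calculation yields $\mul(b) = \M_1$ and $\mul(c) = \mul(d) = \M_2$. To prove $a = P_{\N_1} a \ \hat{\oplus}\ (\{0\} \times \M_1)$, for $(x,y) \in a$ I decompose $y = P_{\N_1} y + P_{\M_1} y$ using $\St = \N_1 \oplus \M_1$, which gives the ``$\subseteq$'' inclusion. For ``$\supseteq$'', given $(x, P_{\N_1} y) \in P_{\N_1} a$ arising from $(x,y) \in a$ and $z \in \M_1$, the vector $z - P_{\M_1} y$ lies in $\M_1 \subseteq \mul(A) \cap \St$, so $(0, z - P_{\M_1} y) \in a$; adding this to $(x,y)$ yields $(x, P_{\N_1} y + z) \in a$. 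Orthogonality of the two summands is immediate from $\N_1 \perp \M_1$, and uniqueness of the decomposition follows from Theorem \ref{TdecompHassi}. The decompositions for $b$, $c$, $d$ are obtained by exactly the same procedure, with $\N_2$, $\M_2$, $\St^\perp$ substituted in the appropriate positions.

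There is no substantive obstacle here; the statement is a careful reading of the matrix formulas through the direct sums $\St = \N_1 \oplus \M_1$ and $\St^\perp = \N_2 \oplus \M_2$. The only delicate point is bookkeeping: $a$ and $b$ share the multivalued space $\M_1$ (their ranges lie in $\St$), so their decompositions project the second coordinate onto $\N_1$, whereas $c$ and $d$ share $\M_2$ and are projected onto $\N_2$; once this is kept straight, everything reduces to applying the orthogonal decomposition of $\mul(A)$ inside each block.
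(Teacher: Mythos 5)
Your proof is correct, but it takes a genuinely more elementary route than the paper's. The paper first passes to the operator part $A_0$ via Corollaries \ref{corMDLR} and \ref{corMDLR3} to obtain the componentwise decompositions, then forms the formal adjoint matrix $\hat{A}_0$ and invokes an external result on adjoints of block operator matrices to deduce that $a_0,d_0$ are symmetric and $b_0,c_0$ closable, and only afterwards verifies $a\subset a^*$ and $c\subset b^*$ by computing through those decompositions. You instead read all the adjoint relations directly off the definitions $a=P_{\St}A|_{\St}$, $b=P_{\St}A|_{\St^{\perp}}$, etc., using only $A=A^*$ and the fact that $P_{\St}$ drops out of inner products against vectors of $\St$; and you obtain the decompositions by hand from $\mul(a)=\mul(b)=\M_1$, $\mul(c)=\mul(d)=\M_2$ and the splitting $\St=\N_1\oplus\M_1$, $\St^{\perp}=\N_2\oplus\M_2$. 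This avoids both the detour through $A_0$ and the citation of the block-matrix adjoint theorem, at the cost of not exhibiting the operator parts $a_0,b_0,c_0,d_0$ as the blocks of $A_0$ (which the paper needs later, but which is not part of the statement). The one step you should make explicit is the hypothesis of Theorem \ref{TdecompHassi}: to conclude closability and uniqueness you must check $\ran(P_{\N_1}a)\subseteq\cldom(a^*)$ (and likewise for the other blocks). This is immediate from what you already proved, since $a\subset a^*$ gives $\N_1=\ol{\mc{D}_1}=\cldom(a)\subseteq\cldom(a^*)$, and $c\subset b^*$ together with $b\subset c^*$ gives $\N_1\subseteq\cldom(b^*)$ and $\N_2\subseteq\cldom(c^*)$; so the gap is one line, not a flaw in the approach.
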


\begin{proof} Let
\begin{equation*}\label{matrixA}
A=\matriz{a}{b}{c}{d}
\end{equation*} 
be the matrix representation of $A$ with respect to $\St \oplus \St^{\perp}$ given by Theorem \ref{theoMLR}. From Lemma \ref{lemaRMN2}, $\St=\N_1 \oplus \M_1$ and $\St^{\perp}=\N_2 \oplus \M_2.$ 	Write $A=A_0 \ \hat{\oplus} \ A_{\mul}$ as in \eqref{A0}. Then, by Corollary \ref{corMDLR}, $A_0$ admits the matrix representation with respect to $\N_1 \oplus \N_2$ 
\begin{equation}\label{matrixA01}
A_0=\matriz{a_0}{b_0}{c_0}{d_0},
\end{equation} 
where  $a_0:=P_{\N_1}a,$ $b_0:=P_{\N_1}b,$ $c_0:=P_{\N_2}c$ and $d_0:=P_{\N_2}d.$ 
Since $\dom(a)=\dom(c)=\mc{D}_1$ and $\mul(a)=\mul(b)=\M_1,$ by Corollary \ref{corMDLR3}, $a=a_0 \ \hat{\oplus } \ (\{0\} \times \M_1).$ Likewise, $b=b_0 \ \hat{\oplus } \ (\{0\} \times \M_1),$ $c=c_0 \ \hat{\oplus } \ (\{0\} \times \M_2)$ and $d=d_0 \ \hat{\oplus } \ (\{0\} \times \M_2).$

Define $$\hat{A}_0:=\matriz{a_0^\times}{c_0^\times}{b_0^\times}{d_0^\times}$$ with $\dom(\hat{A}_0)=  \dom(a_0^\times) \cap \dom(b_0^\times) \oplus \dom(c_0^\times)\cap \dom(d_0^\times),$ where $a_0^{\times}$ denotes the adjoint of $a_0$ when viewed as an operator from $\N_1$ to $\N_1,$ likewise $b_0^{\times}, c_0^{\times}$ and $d_0^{\times}.$

Since $A$ is selfadjoint, $A_0=A_0^{\times},$ where $A_0^{\times}$ denotes the adjoint of $A_0$ when viewed as an operator from $\cldom(A)$ to $\cldom(A).$ Then  $A_0^\times$ admits a matrix decomposition with respect to $\N_1 \oplus \N_2$. Then, by \cite[Theorem 2.2]{Chen},  $A_0=A_0^\times=\hat{A}_0.$
So that
\begin{equation*} \label{eqaux1}
	A_0=\matriz{a_0}{b_0}{c_0}{d_0}=\matriz{a_0^\times}{c_0^\times}{b_0^\times}{d_0^\times}=\hat{A}_0.
\end{equation*}

Then   $$a_0 \subset  a_0^\times, \ \ d_0 \subset  d_0^\times, \ \ b_0 \subset c_0^\times  \mbox{ and } \ \ c_0 \subset b_0^\times.$$ 

So that $a_0$ and $d_0$ are  symmetric operators on $\N_1$ and $\N_2,$ respectively, and $b_0$ and $c_0$ are closable operators. Also, since $a_0, b_0, c_0, d_0$ are closable operators, by Theorem \ref{TdecompHassi}, $a,b, c$ and $d$ are decomposable with (unique) decompositions: $a=P_{\N_1}a \ \hat{\oplus } \ (\{0\} \times \M_1),$ $b=P_{\N_1}b \ \hat{\oplus } \ (\{0\} \times \M_1),$ $c=P_{\N_2}c \ \hat{\oplus } \ (\{0\} \times \M_2)$ and $d=P_{\N_2}d \ \hat{\oplus } \ (\{0\} \times \M_2).$

Let us see that $a \subset a^*.$ Let $(x_1,w_1)\in a,$ then $x_1 \in \mc{D}_1$ and there exists $m_1 \in \M_1$ such that
$$(x_1,w_1)=(x_1,a_0x_1)+(0,m_1).$$
Also, let $(f,g)\in a,$ then $f \in \mc{D}_1$ and there exists $m \in \M_1$ such that
$$(f,g)=(f,a_0f)+(0,m).$$ Hence
\begin{align*}
\PI{g}{x_1}_\HH&=\PI{a_0f+m}{x_1}_\HH=\PI{a_0f}{x_1}_\HH=\PI{a_0f}{x_1}_{\N_1}\\
&=\PI{a_0^{\times}f}{x_1}_{\N_1}=\PI{f}{a_0x_1}_{\N_1}=\PI{f}{a_0x_1+m_1}_\HH=\PI{f}{w_1}_\HH.
\end{align*}
Then $(x_1,w_1) \in a^*.$ Likewise, $d \subset d^*$ and $c \subset b^*.$ 
\end{proof}

By the proof of the last lemma, $A \subset \matriz{a^*}{c^*}{b^*}{d^*}$ and, by \cite[Proposition 6.1]{HassiMR}, the other inclusion always holds. So that $A$ admits the matrix representation
$$A=\matriz{a^*}{c^*}{b^*}{d^*}.$$

\begin{lema} [\normalfont{cf. \cite[Chapter VI]{Kato}, \cite[Lemma 5.3.1]{Behrndt}}]  \label{lemaFr} Let $A$ be a nonnegative symmetric linear relation in $\HH.$ If $A_F$ is the Friedrichs extension of $A$, then  $\dom(A)$ is a core of $A_F^{1/2}$ and $\mul(A_F)=\mul(A^*).$ 
\end{lema}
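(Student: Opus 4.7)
The plan is to reduce to the classical Friedrichs extension theorem for densely defined nonnegative symmetric operators. Since $A$ is symmetric, one has $\mul(A) \subseteq \mul(A^*) = \dom(A)^{\perp} = \cldom(A)^{\perp}$, so whenever $(x,y_1),(x,y_2)\in A$ the difference $y_1-y_2 \in \mul(A)$ lies in $\cldom(A)^{\perp}$; hence $P_{\cldom(A)}y$ depends only on $x$ and defines the operator part $A_0$ as a densely defined nonnegative symmetric operator in the Hilbert space $\cldom(A)$, with $\dom(A_0)=\dom(A)$.

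Applying the classical Friedrichs extension theorem inside $\cldom(A)$ yields a nonnegative selfadjoint operator $(A_0)_F$ in $\cldom(A)$ for which Kato's classical result guarantees that $\dom(A_0)$ is a core of $(A_0)_F^{1/2}$. I would then lift this back to $\HH$ by setting
$$A_F := (A_0)_F \ \hat{\oplus}\ (\{0\} \times \cldom(A)^{\perp}),$$
which is a nonnegative selfadjoint relation in $\HH$ extending $A$ and with multivalued part $\cldom(A)^{\perp}$. Identifying this $A_F$ with the Friedrichs extension of $A$ in the form-theoretic sense of the cited references amounts to observing that the natural quadratic form $t_A[x,y]:=\PI{f}{y}$ for $(x,f)\in A$, $y\in\dom(A)$, is well-defined precisely because $\mul(A)\perp\dom(A)$, and that $t_A$ coincides with the form of the densely defined operator $A_0$ in $\cldom(A)$; the form closures therefore match. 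By the definition of the square root given in Section~\ref{sec:preliminaries}, $A_F^{1/2}=(A_0)_F^{1/2}\ \hat{\oplus}\ (\{0\}\times \cldom(A)^{\perp})$, so the operator part of $A_F^{1/2}$ is $(A_0)_F^{1/2}$.

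Both conclusions then follow directly. First, $\mul(A_F) = \cldom(A)^{\perp} = \dom(A)^{\perp} = \mul(A^*)$. Second, by the remark in Section~\ref{sec:preliminaries} that a core of the operator part of a decomposable linear relation is a core of the relation itself, and because $\dom(A)=\dom(A_0)$ is a core of $(A_0)_F^{1/2}$, it follows that $\dom(A)$ is a core of $A_F^{1/2}$. The main obstacle is the identification step in the middle paragraph: reconciling the operator-part construction of $A_F$ with the form-theoretic definition of the Friedrichs extension used in Kato and Behrndt; everything else is a routine consequence of the preliminaries.
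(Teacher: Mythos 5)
The paper offers no proof of this lemma---it is imported from Kato and from Behrndt--Hassi--de Snoo---so there is nothing internal to compare against; your argument has to stand on its own, and it does. The reduction is the standard one: since $\mul(A)\subseteq\mul(A^*)=\dom(A)^{\perp}$, the map $x\mapsto P_{\cldom(A)}y$ for $(x,y)\in A$ is a well-defined, densely defined, nonnegative symmetric operator $A_0$ in $\cldom(A)$ whose form equals $\mathfrak{t}_A$; the classical Friedrichs theorem gives that $\dom(A_0)$ is a core of $(A_0)_F^{1/2}$, the paper's definition of the square root of a nonnegative selfadjoint relation transfers this to $A_F^{1/2}=(A_0)_F^{1/2}\ \hat{\oplus}\ (\{0\}\times\cldom(A)^{\perp})$ via the remark that a core of the operator part is a core of the relation, and $\mul(A_F)=\cldom(A)^{\perp}=\dom(A)^{\perp}=\mul(A^*)$. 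The one step you rightly single out---that $(A_0)_F\ \hat{\oplus}\ (\{0\}\times\cldom(A)^{\perp})$ is the Friedrichs extension in the sense of the cited references---is exactly the first representation theorem for semibounded forms and relations applied to $\ol{\mathfrak{t}_A}=\ol{\mathfrak{t}_{A_0}}$; since the paper already leans on the second representation theorem from the same source, invoking the first is fair game, but be aware that your proof is a reduction to that machinery rather than a replacement for it. One terminological caution: your $A_0$ is the compression $P_{\cldom(A)}A$, which for a non-decomposable symmetric $A$ need not agree with the paper's operator part $A\cap(\cldom(A)\times\cldom(A^*))$; nothing in your argument depends on that identification (you only use $A\subseteq A_0\ \hat{+}\ (\{0\}\times\cldom(A)^{\perp})$ and the nonnegativity and symmetry of $A_0$ in $\cldom(A)$), so simply avoid calling it \emph{the} operator part in the sense of Section 2.
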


\begin{prop}\label{proprepreA} Let $A \geq 0$ be a linear relation in $\HH$ and let $\St$ be a closed subspace of $\HH$ such that $P_{\St} (\dom(A)) \subseteq \dom(A).$ Then $A$ admits the $2 \times 2$ block matrix representation with respect to $\St \oplus \St^{\perp}$
\begin{equation}\label{repreA}
\matriz{a_F}{b}{c}{d_F}
\end{equation}
where $a_F$ and $d_F$ are the Friedrichs extensions of $a:=P_{\St}A|_{\St}$ and $d:=P_{\St^{\perp}}A|_{\St^{\perp}},$ respectively, $b:=P_{\St}A|_{\St^{\perp}},$  $c:=P_{\St^{\perp}}A|_{\St}$ are decomposable linear relations and $c \subset b^{*}.$ 

Moreover, if $A$ is written as in \eqref{A0} then 
$A_0$ admits the matrix representation with respect to $\N_1 \oplus \N_2:$ 
\begin{equation}\label{repreA01}
A_0=\matriz{(a_F)_0}{b_0}{c_0}{(d_F)_0}
\end{equation}
where $(a_F)_0$ and $(d_F)_0$ are the nonnegative selfadjoint operator parts of $a_F$ and $d_F,$ respectively and
$a_F=(a_F)_0\ \hat{\oplus } \ (\{0\} \times \M_1),$ $b=b_0 \ \hat{\oplus } \ (\{0\} \times \M_1),$ $c=c_0\ \hat{\oplus } \ (\{0\} \times \M_2)$ and $d=(d_F)_0 \ \hat{\oplus } \ (\{0\} \times \M_2),$  where $b_0=P_{\N_1}b,$  $c_0=P_{\N_2}c$ and $(a_F)_0$ and $(d_F)_0$ are  the Friedrichs extensions of $a_0=P_{\N_1}a$ and $d_0=P_{\N_2}d,$ respectively.
\end{prop}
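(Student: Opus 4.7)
The plan is to begin with the canonical matrix representation $A=\matriz{a}{b}{c}{d}$ of Theorem \ref{theoMLR}, where $a=P_{\St}A|_{\St}$, $b=P_{\St}A|_{\St^\perp}$, $c=P_{\St^\perp}A|_{\St}$ and $d=P_{\St^\perp}A|_{\St^\perp}$. By Lemma \ref{lemapreA}, $a$ and $d$ are symmetric, $c\subset b^*$, and all four blocks are decomposable with operator parts $P_{\N_1}a$, $P_{\N_1}b$, $P_{\N_2}c$, $P_{\N_2}d$ and multivalued parts $\M_1$ or $\M_2$ as appropriate. Using $A\geq 0$ one deduces $a\geq 0$: for $(x,w)\in a$ there exists $y$ with $(x,y)\in A$ and $w=P_{\St}y$, so $\PI{w}{x}=\PI{P_{\St}y}{x}=\PI{y}{x}\geq 0$ since $x\in\St$. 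Analogously $d\geq 0$, so the Friedrichs extensions $a_F$ in $\St$ and $d_F$ in $\St^\perp$ exist.

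The core step is to establish $A=\matriz{a_F}{b}{c}{d_F}$. The inclusion $A\subset \matriz{a_F}{b}{c}{d_F}$ is immediate from $a\subset a_F$ and $d\subset d_F$. For the reverse inclusion, I would invoke \cite[Corollary 2.2]{Hassi} and match domains and multivalued parts. Since $\dom(a)=\dom(c)=\mc{D}_1\subseteq\dom(a_F)$ and $\dom(b)=\dom(d)=\mc{D}_2\subseteq\dom(d_F)$, the domain of the extended matrix coincides with $\mc{D}_1\oplus\mc{D}_2=\dom(A)$. For the multivalued parts, Lemma \ref{lemaFr} applied inside $\St$ gives $\mul(a_F)=\mul(a^*)=\St\ominus\ol{\dom(a)}=\St\ominus\N_1=\M_1=\mul(b)$, and an analogous computation yields $\mul(d_F)=\M_2=\mul(c)$; thus the multivalued part of the extended matrix equals $\M_1\oplus\M_2=\mul(A)$. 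Equality of relations then follows.

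For the statement about $A_0$, I would apply Corollary \ref{corMDLR} to the just-established representation to obtain $A_0=\matriz{P_{\N_1}a_F}{P_{\N_1}b}{P_{\N_2}c}{P_{\N_2}d_F}$. Writing $a_F=(a_F)_0\ \hat{\oplus }\ (\{0\}\times\M_1)$ and using $\M_1\perp\N_1$ shows $P_{\N_1}a_F=(a_F)_0$; similarly $P_{\N_2}d_F=(d_F)_0$. The componentwise-sum decompositions of $a_F, b, c, d_F$ then follow from the uniqueness in Theorem \ref{TdecompHassi}. Finally, to identify $(a_F)_0$ with the Friedrichs extension of $a_0=P_{\N_1}a$ in $\N_1$, I would observe that $(a_F)_0$ is a nonnegative selfadjoint operator in $\N_1$ extending $a_0$, and that Lemma \ref{lemaFr} implies $\mc{D}_1=\dom(a_0)$ is a core of $(a_F)_0^{1/2}$; this variational characterisation pins down $(a_F)_0$ as the Friedrichs extension of $a_0$, and likewise for $(d_F)_0$.

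The main obstacle I anticipate is the bookkeeping around Lemma \ref{lemaFr}: one must be careful that adjoints and orthogonal complements are computed in the correct ambient space ($\St$ and $\St^\perp$, not $\HH$), and one must verify that the Friedrichs extension interacts correctly with the passage to operator parts, so that $(a_F)_0$ is itself the Friedrichs extension of $a_0$ in $\N_1$. Once these points are settled, the remaining computations are essentially bookkeeping on domains, ranges and multivalued parts.
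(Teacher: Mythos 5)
Your proposal is correct and follows essentially the same route as the paper: establish the representation \eqref{repreA} by matching the domain and multivalued part of $\matriz{a_F}{b}{c}{d_F}$ against those of $A$ (using Lemma \ref{lemapreA} and $\mul(a_F)=\mul(a^*)=\St\ominus\N_1=\M_1$, computed in the ambient space $\St$), then pass to operator parts and identify $(a_F)_0$ with the Friedrichs extension of $a_0.$ The paper performs this last identification by showing the forms $\mathfrak{t}_a$ and $\mathfrak{t}_{a_0}$ coincide and invoking the Second Representation Theorem, which is the same uniqueness argument you compress into the core/variational characterisation of the Friedrichs extension via Lemma \ref{lemaFr}.
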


\begin{proof} Let
\begin{equation*}
A=\matriz{a}{b}{c}{d}
\end{equation*} 
be the matrix representation of $A$ with respect to $\St \oplus \St^{\perp}$ as in Lemma \ref{lemapreA}. Since $A \geq 0,$ it follows that $a$ and $d$ are nonnegative symmetric linear relations.

Also, by Corollaries \ref{corMDLR} and \ref{corMDLR3}, if $A$ is written as in \eqref{A0} then 
$A_0$ admits the matrix representation with respect to $\N_1 \oplus \N_2:$ 
$
A_0=\matriz{a_0}{b_0}{c_0}{d_0},$
where
$a=a_0\ \hat{\oplus } \ (\{0\} \times \M_1),$ $b=b_0 \ \hat{\oplus } \ (\{0\} \times \M_1),$ $c=c_0 \ \hat{\oplus } \ (\{0\} \times \M_2)$ and $d=d_0\ \hat{\oplus } \ (\{0\} \times \M_2).$

 Let $a_F$ and $d_F$ be the Friedrichs extensions of $a$ and $d,$ respectively.  By Lemma \ref{lemaFr}, $\dom(a)=\mc{D}_1$ is a core of $a_F^{1/2}$ and $\dom(d)=\mc{D}_2$ is a core of $d_F^{1/2}.$ 

Set $$A':=\matriz{a_F}{b}{c}{d_F}.$$ Then $\dom(A')=\dom(a_F)\cap \dom(c) \oplus \dom(b)\cap \dom(d_F)=\mc{D}_1\oplus \mc{D}_2=\dom(A),$ because $\dom(c)=\mc{D}_1$ and $\dom(b)=\mc{D}_2.$ Also,
$\mul(A')=\mul(a_F)+ \mul(b) \oplus \mul(c) + \mul(d_F)=\M_1 \oplus \M_2=\mul(A),$ because $\mul(a_F)=\mul(a^*)=\dom(a)^{\perp}= \M_1,$ $\mul(b)=\M_1,$ $\mul(d_F)=\mul(d^*)= \dom(d)^{\perp}=\M_2$ and $\mul(c)=\M_2.$ 
But, since
$A \subset A',$ it follows that $$A=A'=\matriz{a_F}{b}{c}{d_F}.$$
Since $a_F$ and $d_F$ are selfadjoint, $a_F$ and $d_F$ are decomposable and $a_F=(a_F)_0 \ \hat{\oplus} \ (\{0\}  \times \M_1)$ and $d_F=(d_F)_0 \ \hat{\oplus} \ (\{0\}  \times \M_2)$ where $(a_F)_0$ and $(d_F)_0$ are the nonnegative selfadjoint operator parts of $a_F$ and $d_F,$ respectively.

Let us see that $(a_F)_0$ is the Friedrichs extension of $a_0$ and $(d_F)_0$ is the Friedrichs extension of $d_0,$ cf. \cite[Theorem 5.3.3]{Behrndt}. Since $a$ is a nonnegative symmetric linear relation in $\St,$  the form $\mathfrak{t}_a$ given by $\mathfrak{t}_a[u,v]:=\PI{u'}{v}$ for $(u,u'), (v,v') \in a$ with $\dom(\mathfrak{t}_a)=\dom(a),$ is nonnegative and closable, \cite[Lemma 5.1.17]{Behrndt}. Also, by the proof of Lemma \ref{lemapreA}, $a_0$ is a nonnegative symmetric linear operator on $\N_1,$ then the form $\mathfrak{t}_{a_0}$ given by $\mathfrak{t}_{a_0}[u,v]:=\PI{a_0u}{v}$ for $u,v \in \dom(a_0),$ with $\dom(\mathfrak{t}_{a_0})=\dom(a_0),$ is nonnegative and closable. But
$$\mathfrak{t}_a=\mathfrak{t}_{a_0}.$$ In fact, it is clear that $\dom(\mathfrak{t}_{a_0})=\dom(\mathfrak{t}_{a}).$ Let $u,v \in  \dom(\mathfrak{t}_{a})=\dom(a)$ then there exist $u', v' \in \HH$ such that $(u,u'), (v,v') \in a.$ Then $u'=a_0u+m$ for some $m \in \M_1 \perp \N_1.$ Then
$$\mathfrak{t}_a[u,v]=\PI{a_0u+m}{v}=\PI{a_0u}{v}=\mathfrak{t}_{a_0}[u,v],$$ because $v \in \mc{D}_1.$
Hence, the closures of the forms coincide, i.e., $\ol{\mathfrak{t}_a}=\ol{\mathfrak{t}_{a_0}}.$ 
Then, by the Second Representation Theorem \cite[Theorem 5.1.23]{Behrndt}, 
$$\ol{\mathfrak{t}_a}[u,v]=\PI{(a_F)_0^{1/2}u}{(a_F)_0^{1/2}v}$$ for every $u,v \in \dom((a_F)_0^{1/2})=\dom(\ol{\mathfrak{t}_a})$ and 
$$\ol{\mathfrak{t}_{a_0}}[u,v]=\PI{(a_0)_F^{1/2}u}{(a_0)_F^{1/2}v}$$ for every $u,v \in \dom((a_0)_F^{1/2})=\dom(\ol{\mathfrak{t}_{a_0}}),$ where $(a_0)_F$ is the Friedrichs extension of $a_0.$ So that $(a_F)_0=(a_0)_F.$
Likewise, $(d_F)_0=(d_0)_F.$
Then, $a_0 \subset(a_F)_0,$ $d_0 \subset(d_F)_0$ and, by Lemma \ref{lemaFr}, $\dom(a_0)=\mc{D}_1$ is a core of $(a_F)_0^{1/2}$ and $\dom(d_0)=\mc{D}_2$ is a core of $(d_F)_0^{1/2}.$ 
Then
$$A_0 \subset A'':=\matriz{(a_F)_0}{b_0}{c_0}{(d_F)_0}.
$$
But, $\dom(A'')=\dom((a_F)_0)\cap \dom(c_0) \oplus \dom(b_0)\cap \dom((d_F)_0)=\mc{D}_1\oplus \mc{D}_2=\dom(A_0),$ because $\dom(c_0)=\mc{D}_1$ and $\dom(b_0)=\mc{D}_2.$ Then $A_0=A''.$
\end{proof}

\begin{thm} \label{thmRMRL} Let  $A \geq 0$ be a  linear relation in $\HH$ and let $\St$ be a closed subspace of $\HH$ such that $P_{\St} (\dom(A)) \subseteq \dom(A).$ Then $A$ admits a matrix decomposition in $\HH$ with respect to $\St \oplus \St^{\perp},$
	\begin{equation}\label{repreARL}
		A=\matriz{a}{b}{c}{d},
	\end{equation}
such that:
	\begin{enumerate}
		\item[1.]   $a$ and $d$  are nonnegative selfadjoint linear relations with $\mc{D}_1\subseteq \dom(a),$ $\mc{D}_2\subseteq \dom(d),$ $\mc{D}_2=\dom(b),$ $\mc{D}_1=\dom(c),$ and $c \subset b^{*};$ 
		\item[2.]  $\mc{D}_1$ is a core of $a^{1/2}$ and  $\mc{D}_2$ is a core of $d^{1/2};$
		\item [3.] there exists a contraction $g: \St^{\perp}\ra \St$ such that $$b=a^{1/2}gd^{1/2}|_{\mc{D}_2} \mbox{ and } c=d^{1/2}g^*a^{1/2}|_{\mc{D}_1}.$$
	\end{enumerate}
\end{thm}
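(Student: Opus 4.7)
The plan is to use Proposition \ref{proprepreA} to dispose of parts (1) and (2), and then to build the contraction $g$ of part (3) via a Cauchy--Schwarz argument at the level of the operator part of $A$. By Proposition \ref{proprepreA}, $A$ admits the decomposition $\matriz{a_F}{b}{c}{d_F}$; I take $a:=a_F$, $d:=d_F$. Parts (1) and (2) then hold: $a,d$ are nonnegative selfadjoint with the stated domain inclusions, $\dom(b)=\mc{D}_2$ and $\dom(c)=\mc{D}_1$ by definition, $c\subset b^{*}$ comes from Lemma \ref{lemapreA}, and $\mc{D}_1,\mc{D}_2$ are cores of $a^{1/2}, d^{1/2}$ by Lemma \ref{lemaFr}. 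Everything reduces to constructing $g$.

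Writing $a_0:=(a_F)_0$, $d_0:=(d_F)_0$, $b_0:=P_{\N_1}b$, $c_0:=P_{\N_2}c$, Proposition \ref{proprepreA} gives $A_0=\matriz{a_0}{b_0}{c_0}{d_0}$ on $\N_1\oplus\N_2$, and $A_0\geq 0$. Evaluating the quadratic form on $(u_1,u_2)\in \mc{D}_1\oplus\mc{D}_2=\dom(A_0)$ yields
\[
\|a_0^{1/2}u_1\|^2 + 2\operatorname{Re}\PI{b_0 u_2}{u_1} + \|d_0^{1/2}u_2\|^2 \geq 0.
\]
Replacing $u_1$ by $\lambda e^{i\theta}u_1$ and optimizing over $\theta$ and $\lambda>0$ upgrades this to the Cauchy--Schwarz-type inequality
\[
|\PI{b_0 u_2}{u_1}|\leq \|a_0^{1/2}u_1\|\,\|d_0^{1/2}u_2\|
\]
for all $u_1\in \mc{D}_1$, $u_2\in \mc{D}_2$. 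In particular, $\PI{b_0 u_2}{u_1}$ depends only on $a_0^{1/2}u_1$ and $d_0^{1/2}u_2$.

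This lets me define a sesquilinear form $\psi$, bounded by $1$, on $\ran(d_0^{1/2}|_{\mc{D}_2})\times \ran(a_0^{1/2}|_{\mc{D}_1})$ by $\psi(d_0^{1/2}u_2, a_0^{1/2}u_1):=\PI{b_0 u_2}{u_1}$. Since $\mc{D}_1,\mc{D}_2$ are cores, these ranges are dense in $\clran(d_0^{1/2})$ and $\clran(a_0^{1/2})$, so $\psi$ extends by continuity to the closures. The Riesz representation theorem then delivers a contraction $g_0:\clran(d_0^{1/2})\to \clran(a_0^{1/2})$ with $\psi(y,x)=\PI{g_0 y}{x}$. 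Extending $g_0$ by zero on $\St^{\perp}\ominus \clran(d_0^{1/2})$ produces a contraction $g:\St^{\perp}\to \St$, with range inside $\clran(a_0^{1/2})\subseteq \N_1\subseteq \St$.

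Finally, I would verify the factorization as follows. For $u_1\in \mc{D}_1$ and $u_2\in \mc{D}_2$, $\PI{b_0 u_2}{u_1}=\PI{g_0 d_0^{1/2}u_2}{a_0^{1/2}u_1}$, and the core property of $\mc{D}_1$ extends this identity to every $u_1\in \dom(a_0^{1/2})$. Since $a_0^{1/2}$ is selfadjoint, this forces $g_0 d_0^{1/2}u_2\in \dom(a_0^{1/2})$ and $a_0^{1/2}g_0 d_0^{1/2}u_2=b_0 u_2$. Unpacking the decompositions $a^{1/2}=a_0^{1/2}\,\hat\oplus\,(\{0\}\times\M_1)$, $d^{1/2}=d_0^{1/2}\,\hat\oplus\,(\{0\}\times\M_2)$ and noting that $g$ annihilates $\M_2$ then yields $b=a^{1/2}g d^{1/2}|_{\mc{D}_2}$; the formula $c=d^{1/2}g^{*}a^{1/2}|_{\mc{D}_1}$ follows by the symmetric argument (or from $c\subset b^{*}$ combined with the first factorization). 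The main obstacle I expect is the scaling step together with the promotion from the scalar identity to the operator factorization $b_0=a_0^{1/2}g_0 d_0^{1/2}|_{\mc{D}_2}$; once that is secured, translating back to linear relations is routine bookkeeping.
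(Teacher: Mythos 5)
Your argument is correct, and for the heart of the theorem (item 3) it takes a genuinely different route from the paper. Items 1 and 2 are handled exactly as in the paper, via Proposition \ref{proprepreA} and Lemma \ref{lemaFr}. For item 3, the paper does not argue through a Cauchy--Schwarz inequality for the quadratic form; instead it uses the identity $\Vert A_0^{1/2}h_1\Vert^2=\Vert a_0^{1/2}h_1\Vert^2$ on $\mc{D}_1$ (and its analogue on $\mc{D}_2$) to build two partial isometries $V_1,V_2$ with $V_1a_0^{1/2}=A_0^{1/2}$ on $\mc{D}_1$ and $V_2d_0^{1/2}=A_0^{1/2}$ on $\mc{D}_2$, and then sets $f:=V_1^*V_2$, reading off $b_0=a_0^{1/2}fd_0^{1/2}$ from $b_0=a_0^{1/2}V_1^*A_0^{1/2}$. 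Your construction instead extracts $|\PI{b_0u_2}{u_1}|\leq\Vert a_0^{1/2}u_1\Vert\,\Vert d_0^{1/2}u_2\Vert$ from nonnegativity of the form of $A_0$ (this step does use $c_0\subset b_0^{\times}$ from Lemma \ref{lemapreA} to identify the cross terms as $2\operatorname{Re}\PI{b_0u_2}{u_1}$, which you have available), and then produces the contraction $g_0$ by Riesz representation of the resulting bounded sesquilinear form; the promotion from the scalar identity to $a_0^{1/2}g_0d_0^{1/2}u_2=b_0u_2$ via the core property and selfadjointness of $a_0^{1/2}$ is sound, and the two constructions in fact yield the same contraction. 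What your route buys is a more elementary, self-contained derivation that never mentions $A_0^{1/2}$; what the paper's route buys is the auxiliary objects $V_1,V_2$ themselves, which are reused heavily in Section 4 (e.g.\ $f=V_1^*V_2$, $P_{\LL}=V_1V_1^*$, $s=V_1^*A_0^{1/2}$ in Theorem \ref{thmSchur} and Corollary \ref{proppekarev}), so if you adopted your proof you would need to reintroduce them later anyway.
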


\begin{proof} Items $1$ and $2$ are proved in Proposition \ref{proprepreA}. 

$3:$ Let $A=\matriz{a}{b}{c}{d}$ be  the block matrix representation of $A$ given in \eqref{repreA}. From Lemma \ref{lemaRMN2}, $\St=\N_1 \oplus \M_1$ and $\St^{\perp}=\N_2 \oplus \M_2.$ Write $A=A_0 \ \hat{\oplus} \ A_{\mul}$ as in \eqref{A0}. Then, by Proposition \ref{proprepreA}, $A_0$ admits the matrix representation with respect to $\N_1 \oplus \N_2:$ 
$$
A_0=\matriz{a_0}{b_0}{c_0}{a_0},
$$
where $a_0$ and $d_0$ are the nonnegative selfadjoint operator parts of $a$ and $d,$ respectively, $\mc{D}_1$ is a core of $a_0^{1/2},$ $\mc{D}_2$ is a core of $d_0^{1/2},$ 
$a=a_0\ \hat{\oplus } \ (\{0\} \times \M_1),$ $b=b_0 \ \hat{\oplus } \ (\{0\} \times \M_1),$ $c=c_0\ \hat{\oplus } \ (\{0\} \times \M_2)$ and $d=d_0 \ \hat{\oplus } \ (\{0\} \times \M_2).$

Since $A \geq 0,$ then $A_0$ is a nonnegative selfadjoint operator on $\cldom(A).$ 
Then
$$\PI{A_0^{1/2}h}{A_0^{1/2}k}=\PI{A_0h}{k} \mbox{ for every } h, k \in \dom(A),$$ because $A_0=A_0^{1/2}A_0^{1/2}.$ In particular, for every  $h_1 \in \mc{D}_1$
	\begin{align*}
		\PI{A_0^{1/2}h_1}{A_0^{1/2}h_1}&=\PI{A_0 h_1}{h_1}=\PI{a_0h_1}{h_1}=\PI{a_0^{1/2}h_1}{a_0^{1/2}h_1}.
	\end{align*}
	Then the map $a_0^{1/2}(\mc{D}_1) \ra  A_0^{1/2}(\mc{D}_1),$  $$a_0^{1/2}h_1 \mapsto A_0^{1/2}h_1$$  can be extended to a partial isometry $V_1$ on all of $\N_1,$ with initial space $\ol{a_0^{1/2}(\mc{D}_1)}=\clran(a_0^{1/2})$ (where we used that $\mc{D}_1$ is a core of $a_0^{1/2}$), so that $\ker(V_1)=\ker(a_0^{1/2}),$ and final space $\ol{A_0^{1/2}(\mc{D}_1)}.$  Therefore
	\begin{equation}
		V_1a_0^{1/2} =A_0^{1/2}\mbox{ on } \mc{D}_1.
	\end{equation}
	So, for  every $h_2 \in \mc{D}_2$ and $k_1 \in \mc{D}_1,$ 
	\begin{align*}
		\PI{b_0h_2}{k_1} &=\PI{A_0h_2}{k_1}=\PI{A_0^{1/2}h_2}{A_0^{1/2}k_1}=\PI{A_0^{1/2}h_2}{V_1a_0^{1/2}k_1}\\
		&=\PI{{V_1}^*A_0^{1/2}h_2}{a_0^{1/2}k_1}.
	\end{align*}
	Therefore, ${V_1}^*A_0^{1/2}h_2 \in \dom((a_0^{1/2})^\times)$ and $(a_0^{1/2})^\times{V_1}^*A_0^{1/2}h_2=b_0h_2.$ Since $a_0^{1/2}$ is selfadjoint and the above holds for any $h_2 \in \mc{D}_2,$ it follows that
	$$b_0=a_0^{1/2}V_1^*A_0^{1/2} \mbox { on } \mc{D}_2.$$
	Likewise, there exists a partial isometry $V_2$ in $\N_2$ with initial space $\ol{d_0^{1/2}(\mc{D}_2)}$ and final space $\ol{A_0^{1/2}(\mc{D}_2)},$ such that
	$$V_2d_0^{1/2}=A_0^{1/2} \mbox{ on } \mc{D}_2  \mbox{ and } c_0=d_0^{1/2} V_2^* A_0^{1/2} \mbox{ on }  \mc{D}_1.$$ 
	Then $$b_0h_2=a_0^{1/2} {V_1}^* A_0^{1/2}h_2=a_0^{1/2} {V_1}^*V_2 d_0^{1/2}h_2 \mbox{ for every } h_2 \in \mc{D}_2.$$ 	Set $f:={V_1}^* V_2.$ Then $f$ is a contraction from $\N_1$ to $\N_2$ such that $b_0=a_0^{1/2} f d_0^{1/2}$ on $\mc{D}_2.$ Likewise,
	$c_0=d_0^{1/2}f^*a_0^{1/2}$ on $\mc{D}_1.$

	 Using that $\St^{\perp}=\N_2 \oplus \M_2,$ $f$ has an extension, again a contraction from $\St^{\perp}$ to $\St$, named $g$ such that 
	 $gx=0$ for every $x \in \M_2.$ 
	Let $(x,y) \in a^{1/2} gd^{1/2}|_{\mc{D}_2}.$ Then there exists $z \in \St^{\perp}$ such that $(x,z) \in d^{1/2}|_{\mc{D}_2}$ and $(z,y) \in a^{1/2} g.$ Then $$(x,z)=(x,d_0^{1/2}x)+(0,m_2)$$ for some $m_2 \in \M_2$ and so $z=d_0^{1/2}x+m_2.$ Also, since $(z,y) \in a^{1/2} g,$ it follows that $(gz,y)  \in a^{1/2}.$ Then 
	$(gz,y)=(gz,a_0^{1/2}gz)+(0,m_1)$ for some $m_1 \in \M_1.$ Then, since $m_2 \in \ker(g)$ and $d_0^{1/2}x \in \N_2,$
	$$y=a_0^{1/2}gz+m_1=a_0^{1/2}g(d_0^{1/2}x+m_2)+m_1=a_0^{1/2}fd_0^{1/2}x+m_1=b_0x+m_1.$$ Hence,
	$$(x,y)=(x,b_0x)+(0,m_1) \in b.$$ Conversely, suppose that $(x,y) \in b,$ then  $x \in \mc{D}_2$ and $$(x,y)=(x,b_0x)+(0,m_1)=(x,a_0^{1/2}fd_0^{1/2}x)+(0,m_1)$$ for some $m_1 \in \M_1$ and so $y=a_0^{1/2}fd_0^{1/2}x+m_1.$ Set $z:=d_0^{1/2}x \in \N_2$ then $(x,z)= (x, d_0^{1/2}x) \in d^{1/2}|_{\mc{D}_2}.$ Also, 
	\begin{align*}
		(gz,y)&=(gz,a_0^{1/2}fd_0^{1/2}x)+ (0,m_1)=(gz,a_0^{1/2}fz)+ (0,m_1)\\
		&=(gz,a_0^{1/2}gz)+ (0,m_1) \in a^{1/2}.
	\end{align*} So that $(z,y) \in a^{1/2}  g$ and then $(x,y) \in  a^{1/2}gd^{1/2}|_{\mc{D}_2}.$ 

Likewise,  $c=d^{1/2}g^*a^{1/2}|_{\mc{D}_1}.$
\end{proof}

\bigskip
\begin{cor} \label{propDP} Let $A \geq 0$ be a linear operator in $\HH$ and let $\St$ be a closed subspace of $\HH$ such that $P_{\St} (\dom(A)) \subseteq \dom(A).$ Let $A=\matriz{a}{b}{c}{d}$ be the block matrix representation of $A$ given in \eqref{repreA}. Set $Z:=\matriz{a^{1/2}|_{\mc{D}_1}}{0}{0}{d^{1/2}|_{\mc{D}_2}}$ and  $W:=\matriz{1}{f}{0}{(1-f^*f)^{1/2}} \in L(\HH),$ where $f: \St^{\perp}\ra \St$  is the contraction in the proof of Theorem \ref{thmRMRL}. Then the operator $WZ$ is closable and
$$A=(WZ)^*WZ=(WZ)^*\ol{WZ}.$$
\end{cor}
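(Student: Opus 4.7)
The plan is to recognize $WZ$ as the composition of $A^{1/2}$ with an explicit partial isometry, then appeal to Theorem \ref{thmVN} for $\ol{WZ}$ together with the maximality of selfadjoint relations. The starting point is the quadratic-form identity
$$\|WZx\|^2=\PI{Ax}{x}=\|A^{1/2}x\|^2,\qquad x\in\dom(A),$$
which follows from a direct computation using the block-matrix representation of $A$ and the factorizations $b=a^{1/2}fd^{1/2}|_{\mc{D}_2}$ and $c=d^{1/2}f^*a^{1/2}|_{\mc{D}_1}$ supplied by Theorem \ref{thmRMRL}. The block $(1-f^*f)^{1/2}d^{1/2}x_2$ inside $WZx$ is designed precisely so that the cross terms match.

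Using this identity, I define an isometry $V_0\colon A^{1/2}(\dom(A))\to\HH$ by $V_0(A^{1/2}x):=WZx$. Since $\dom(A)$ is a core of $A^{1/2}$ (as $A$ is selfadjoint), $V_0$ extends by continuity to an isometry on $\ol{\ran(A^{1/2})}$, and setting $V$ equal to zero on $\ker(A^{1/2})=\ol{\ran(A^{1/2})}^{\perp}$ yields a partial isometry $V\in L(\HH)$ with $WZ=VA^{1/2}|_{\dom(A)}$. Closability of $WZ$ now transfers from that of $A^{1/2}$: if $x_n\in\dom(A)$, $x_n\to x$ and $WZx_n\to y$, then $V^*WZx_n=V^*VA^{1/2}x_n=A^{1/2}x_n$ converges, so closedness of $A^{1/2}$ forces $x\in\dom(A^{1/2})$ and, after applying $V$, $y=VA^{1/2}x$. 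In particular $\ol{WZ}=VA^{1/2}$.

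Using equality in \eqref{product} for the bounded factor $V$, one gets $(WZ)^*=(\ol{WZ})^*=(VA^{1/2})^*=A^{1/2}V^*$, hence
$$(WZ)^*\ol{WZ}=A^{1/2}V^*VA^{1/2}=A^{1/2}PA^{1/2}=A^{1/2}A^{1/2}=A,$$
where $P=V^*V$ is the projection onto $\ol{\ran(A^{1/2})}$ and acts as the identity on $\ran(A^{1/2})$; a routine check of domains gives $\dom((WZ)^*\ol{WZ})=\dom(A)$. Since $WZ\subseteq\ol{WZ}$ we deduce $(WZ)^*WZ\subseteq(WZ)^*\ol{WZ}=A$, while polarizing the opening identity and invoking $A=A^*$ gives $\PI{WZh}{WZx}=\PI{h}{Ax}$ for $h,x\in\dom(A)$, so $(x,Ax)\in(WZ)^*WZ$ and $A\subseteq(WZ)^*WZ$. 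The main obstacle is the middle step: extracting the operator factorization $\ol{WZ}=VA^{1/2}$ from the norm identity while keeping precise control of domains, which works precisely because $V$ is isometric on $\ol{\ran(A^{1/2})}$, so $V^*$ inverts $V$ on the relevant subspace.
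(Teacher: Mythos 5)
Your proof is correct, but it follows a genuinely different route from the paper's. The paper works entirely at the level of block matrices: it computes $Z^*=\bigl(\begin{smallmatrix}a^{1/2}&0\\0&d^{1/2}\end{smallmatrix}\bigr)$ from the core property of $\mc{D}_1,\mc{D}_2$, verifies $\Gamma Zh\in\dom(Z^*)$ for $\Gamma=W^*W$ using the factorizations of $b$ and $c$, identifies $Z^*\Gamma Z$ with $\bigl(\begin{smallmatrix}a|_{\mc{D}_1}&b\\c&d|_{\mc{D}_2}\end{smallmatrix}\bigr)\subseteq A$ and concludes $A=(WZ)^*WZ$ by comparing domains; closability of $WZ$ is then obtained by citing the Sebesty\'en--Tarcsay criterion ($\dom(Y^*Y)=\dom(Y)$ implies $Y$ closable), and the identity with $\ol{WZ}$ comes from the adjoint sandwich $A=A^*=(Y^*Y)^*\supset Y^*\ol{Y}\supset Y^*Y=A$. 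You instead extract from the quadratic-form identity $\Vert WZx\Vert^2=\PI{Ax}{x}$ a partial isometry $V$ with $WZ=VA^{1/2}|_{\dom(A)}$, and let the known properties of $A^{1/2}$ (closedness, $\dom(A)$ being a core, $(VA^{1/2})^*=A^{1/2}V^*$) deliver closability, the computation of $(WZ)^*\ol{WZ}$, and the reverse inclusion by polarization. Your approach buys a self-contained argument that avoids both external citations and makes the structural reason for closability transparent; the paper's approach buys an explicit block-matrix identity $A=Z^*\Gamma Z$ that is reused later (e.g.\ in the proof of Theorem \ref{thmSchur}). Two small points you should make explicit: the inclusion $VA^{1/2}\subseteq\ol{WZ}$ (needed for the equality $\ol{WZ}=VA^{1/2}$, not just $\subseteq$) follows from $\dom(A)$ being a core of $A^{1/2}$, and the domain identity $\dom(A^{1/2}PA^{1/2})=\dom(A)$ uses that $P$ fixes $\ran(A^{1/2})$ pointwise, so that $PA^{1/2}x\in\dom(A^{1/2})$ iff $A^{1/2}x\in\dom(A^{1/2})$.
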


\begin{proof} Define  $\Gamma:=W^*W=\matriz{1}{f}{f^*}{1}.$ Then $\Gamma \in L(\HH)$ and $\Gamma \geq 0,$  because $f$ is a contraction, and $Z$ is a densely defined operator with $\dom(Z)=\mc{D}_1 \oplus \mc{D}_2.$ Since $\mc{D}_1$ is a core of $a^{1/2}$ and $\mc{D}_2$ is a core of $d^{1/2},$  
$$Z^*=\matriz{a^{1/2}}{0}{0}{d^{1/2}}.$$
Consider the operator $Z^*\Gamma Z.$ Then $$\dom(Z^*\Gamma Z)=\mc{D}_1 \oplus \mc{D}_2.$$
Clearly, $\dom(Z^*\Gamma Z)\subseteq \dom(Z)=\mc{D}_1 \oplus \mc{D}_2.$
On the other hand, take $h=\vecd{h_1}{h_2} \in \mc{D}_1 \oplus \mc{D}_2,$ then
$$\Gamma Z\vecd{h_1}{h_2}=\matriz{1}{f}{f^*}{1}  \vecd{a^{1/2}h_1}{d^{1/2}h_2}=\vecd{a^{1/2}h_1+fd^{1/2}h_2}{f^*a^{1/2}h_1+d^{1/2}h_2}.$$
Since $b=a^{1/2}fd^{1/2}$ on $\mc{D}_2$ and $a^{1/2}(\mc{D}_1) \subseteq \dom(a^{1/2}),$ it follows that  $a^{1/2}h_1+fd^{1/2}h_2 \in \dom(a^{1/2}).$ Likewise, since
$c=d^{1/2}f^*a^{1/2}$ on $\mc{D}_1$ and $d^{1/2}(\mc{D}_2) \subseteq  \dom(d^{1/2}),$ it follows that  $f^*a^{1/2}h_1+d^{1/2}h_2 \in \dom(d^{1/2}).$ Hence, $\Gamma Zh \in \dom(Z^*)$ and $h \in \dom(Z^*\Gamma Z).$ Then $Z^*\Gamma Z$ has matrix representation and, by \cite[Theorem 2.1]{Chen}, 
\begin{align*}
	Z^*\Gamma Z&=\matriz{a^{1/2}}{0}{0}{d^{1/2}} \matriz{1}{f}{f^*}{1}\matriz{a^{1/2}|_{\mc{D}_1}}{0}{0}{d^{1/2}|_{\mc{D}_2}}\\
	&=\matriz{a|_{\mc{D}_1}}{b}{c}{d|_{\mc{D}_2}} \subseteq A.
\end{align*}
But, since $\dom(Z^*\Gamma Z)=\dom(A)$ it follows that $A=Z^*\Gamma Z=Z^*W^*WZ=(WZ)^*WZ.$

If $Y:=WZ,$ then $\dom(Y)=\dom(Z)=\dom(A).$ 
Therefore, $\dom(Y^*Y)=\dom(A)=\dom(Y).$ Then, by \cite[Theorem 5.1]{Seb}, $Y=WZ$ is closable. Finally,
$$A=Y^*Y=A^*=(Y^*Y)^*\supset Y^*\ol{Y}\supset Y^*Y=A.$$
\end{proof}

\section{The Schur complement of nonnegative selfadjoint  linear relations}
Let $A \geq 0$ be a linear relation in $\HH$ and let $\St$ be a closed subspace of $\HH$ such that $P_{\St} (\dom(A)) \subseteq \dom(A).$  Let  
\begin{equation}\label{repreAS}
A=\matriz{a}{b}{c}{d}
\end{equation}  
be the $2 \times 2$ block matrix representation of $A$ with respect to $\St \oplus \St^{\perp}$ as in Theorem \ref{thmRMRL}. That is, $a$ and $d$  are nonnegative selfadjoint   linear relations with $\mc{D}_1\subseteq \dom(a),$ $\mc{D}_2\subseteq \dom(d),$ $\mc{D}_2=\dom(b),$ $\mc{D}_1=\dom(c),$ and $c \subset b^{*}.$ 
Also, $\mc{D}_1$ is a core of $a^{1/2},$ $\mc{D}_2$ is a core of $d^{1/2}$ and there exists a contraction $g: \St^{\perp}\ra \St$ such that $$b=a^{1/2}gd^{1/2}|_{\mc{D}_2} \mbox{ and } c=d^{1/2}g^*a^{1/2}|_{\mc{D}_1}.$$ Write $A=A_0 \ \hat{\oplus} \ A_{\mul}$ as in \eqref{A0}. Then, 
$a=a_0 \ \hat{\oplus } \ (\{0\} \times \M_1),$ $b=b_0 \ \hat{\oplus } \ (\{0\} \times \M_1),$ $c=c_0 \ \hat{\oplus } \ (\{0\} \times \M_2)$ and $d=d_0 \ \hat{\oplus } \ (\{0\} \times \M_2),$ where   
\begin{equation}\label{repreA0}
A_0=\matriz{a_0}{b_0}{c_0}{d_0}
\end{equation} is the $2 \times 2$ block matrix representation of $A_0$ with respect to $\N_1 \oplus \N_2$ given in \eqref{repreA01}. 
By Theorem \ref{thmRMRL}, there exists a contraction $f: \N_2 \ra \N_1$ such that $$b_0=a_0^{1/2}fd_0^{1/2}|_{\mc{D}_2} \mbox{ and } c_0=d_0^{1/2}f^*a_0^{1/2}|_{\mc{D}_1}.$$

By Lemma \ref{lemaRMN2}, $\St=\N_1 \oplus \M_1$ and $\St^{\perp}=\N_2 \oplus \M_2.$  Then $g=\matriz{f}{0}{0}{0}$ is the matrix decomposition of $g: \N_2 \oplus \M_2 \ra \N_1 \oplus \M_1.$  

In order to define the Schur complement of $A,$ consider $D_g:=(1-g^*g)^{1/2} \in L(\St^{\perp})$ and the closed linear relation
$$T:=\ol{D_gd^{1/2}|_{\mc{D}_2}} \subseteq \St^{\perp}\times \St^{\perp}.$$ 

\begin{lema}\label{lemaRL1} Under the above hypotheses, 
$$T^*T=d_0^{1/2}D_f \ol{D_fd_0^{1/2}|_{\mc{D}_2}}\ \hat{\oplus} \ ( \{0\} \times\M_2),$$
where $D_f:=(1-f^*f)^{1/2} \in L(\N_2).$
\end{lema}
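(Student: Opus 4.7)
The plan is to unfold $T$ via the block structure of $g$ and the canonical decomposition of $d$, and then to compute $T^*T$ directly by computing $T^*$ and composing, pushing $D_f$ across the selfadjoint $d_0^{1/2}$ by means of \eqref{product}.

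First, since $g=\matriz{f}{0}{0}{0}:\N_2\oplus\M_2\to\N_1\oplus\M_1$, a direct block calculation gives $g^*g=\matriz{f^*f}{0}{0}{0}$ and hence $D_g=\matriz{D_f}{0}{0}{1_{\M_2}}\in L(\St^\perp)$. From Proposition~\ref{proprepreA} we have $d=d_0\ \hat{\oplus}\ (\{0\}\times\M_2)$, so by the definition of the square root $d^{1/2}=d_0^{1/2}\ \hat{\oplus}\ (\{0\}\times\M_2)$, and left-composing with the bounded block-diagonal $D_g$ yields
$$D_g d^{1/2}|_{\mc{D}_2}=D_f d_0^{1/2}|_{\mc{D}_2}\ \hat{\oplus}\ (\{0\}\times\M_2).$$
Because $\{0\}\times\M_2$ is closed and $\ran(D_f d_0^{1/2}|_{\mc{D}_2})\subseteq\N_2\perp\M_2$, convergent sequences in this orthogonal componentwise sum decouple into their $\N_2$- and $\M_2$-parts, so the closure distributes and
$$T=R\ \hat{\oplus}\ (\{0\}\times\M_2),\qquad R:=\ol{D_f d_0^{1/2}|_{\mc{D}_2}}\subseteq\N_2\times\N_2.$$

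Second, I would compute $T^*$ from the defining pairing $T^*=JT^\perp$. Testing $(u,v)\in T^*$ against $(x_1,y_1)\in R$ and $(0,m)\in\{0\}\times\M_2$ separately forces $u\perp\M_2$ (hence $u\in\N_2$) and $(u,P_{\N_2}v)\in R^\times$, where $R^\times$ denotes the adjoint of $R$ viewed inside $\N_2$, while the $\M_2$-component of $v$ is unconstrained. Thus $T^*=R^\times\ \hat{\oplus}\ (\{0\}\times\M_2)$. Composing, any witness $z$ for $(x,y)\in T^*T$ must lie in $\dom(R^\times)\subseteq\N_2$, which kills the $\M_2$-summand of $T$, so
$$T^*T=R^\times R\ \hat{\oplus}\ (\{0\}\times\M_2).$$
By \eqref{product} with the bounded operator $D_f$, together with the fact that $\mc{D}_2$ is a core of the selfadjoint $d_0^{1/2}$ (item $2$ of Theorem~\ref{thmRMRL}),
$$R^\times=(D_f d_0^{1/2}|_{\mc{D}_2})^*=(d_0^{1/2}|_{\mc{D}_2})^*D_f^*=d_0^{1/2}D_f,$$
so $R^\times R=d_0^{1/2}D_f\cdot\ol{D_f d_0^{1/2}|_{\mc{D}_2}}$, which yields the claimed identity.

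The main obstacle is careful bookkeeping of which subspace of $\St^\perp$ each relation inhabits, at two places: justifying that the closure commutes with the orthogonal componentwise sum $\hat{\oplus}$ (which reduces to the orthogonal decoupling of sequences between $\N_2$ and $\M_2$), and verifying that the composition $T^*T$ picks up no extra contribution from the $\{0\}\times\M_2$ summand of $T$ (which relies on the containment $\dom(R^\times)\subseteq\N_2$). Neither step is deep, but both make essential use of the orthogonal decomposition $\St^\perp=\N_2\oplus\M_2$.
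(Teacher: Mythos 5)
Your argument is correct and follows essentially the same route as the paper: the same block decomposition of $D_g$ and $d^{1/2}$ giving $T=\ol{D_fd_0^{1/2}|_{\mc{D}_2}}\ \hat{\oplus}\ (\{0\}\times\M_2)$, the same identification of $T^*$ via the core property of $\mc{D}_2$ and equality in \eqref{product} for the bounded factor, and the same element-chasing showing the $\M_2$-summand contributes only to the multivalued part. The only (harmless) difference is at the last step: you get $T^*T=R^{\times}R\ \hat{\oplus}\ (\{0\}\times\M_2)$ by direct composition (the reverse inclusion being immediate from the componentwise structure, though you should state it), whereas the paper proves one inclusion and then upgrades to equality using that both operator parts are selfadjoint in $\N_2$.
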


\begin{proof} The matrix decomposition of $D_g$ with respect to $\N_2 \oplus \M_2$ is $D_g=\matriz{D_f}{0}{0}{1}.$  Then $D_gd_0^{1/2}|_{\mc{D}_2}=D_fd_0^{1/2}|_{\mc{D}_2} \subseteq \N_2 \times \N_2$ and, since $d^{1/2}|_{\mc{D}_2}=d_0^{1/2}|_{\mc{D}_2} \ \hat{\oplus} \ (\{0\} \times \M_2),$
\begin{equation}\label{eqDg}
D_gd^{1/2}|_{\mc{D}_2}=D_fd_0^{1/2}|_{\mc{D}_2} \ \hat{\oplus} \ (\{0\} \times \M_2).
\end{equation} 
So that 
\begin{equation}\label{eqT}
T=\ol{D_fd_0^{1/2}|_{\mc{D}_2}  \ \hat{\oplus} \ (\{0\} \times \M_2)}=\ol{t} \ \hat{\oplus} \ ( \{0\} \times\M_2),
\end{equation} 
where $t:=D_fd_0^{1/2}|_{\mc{D}_2}.$ Since $\mc{D}_2 \subseteq \dom(T)= \dom(\ol{t}) \subseteq \N_2,$ then 
$$\cldom(T)=\cldom(\ol{t})=\N_2.$$ 	
Also, $$T^*=(D_gd^{1/2}|_{\mc{D}_2})^*=(d^{1/2}|_{\mc{D}_2})^*D_g=d^{1/2}D_g,$$ where we used that $D_g \in L(\St^{\perp}$) so there is equality in \eqref{product}  and $\mc{D}_2$ is a core of $d^{1/2}.$
Then
$$T^*=(d_0^{1/2} \ \hat{\oplus} \ (\{0\} \times \M_2))D_g=d_0^{1/2}D_f\hat{\oplus} \ ( \{0\} \times\M_2)=t^{\times} \ \hat{\oplus} \ ( \{0\} \times\M_2),$$
where $t^{\times}$ denotes the adjoint of $t$ when viewed as an operator in $\N_2.$
Finally, since $t$ is a densely defined operator in $\N_2,$  $t^{\times}$ is an operator in $\N_2$ and $\mul(t^{\times}\ol{t})=\mul(t^{\times})=\{0\}.$ Therefore, by Theorem \ref{thmVN},  $t^{\times}\ol{t}$ is a nonnegative selfadjoint linear operator in $\N_2$ and
$$\mul(T^*T)=\mul(T^*)=\dom(T)^{\perp}=\St^{\perp} \ominus \N_2 =\M_2.$$ 
Now, suppose that $(x,y) \in T^*T.$ Then $(x,z) \in T$ and $(z,y) \in T^*$ for some $z \in \St^{\perp}.$ Then
\begin{align*}
	(x,z)&=(x,z')+(0,m) \mbox{ for some } m \in \M_2 \mbox{ and } z' \in \N_2 \mbox{ such that } (x,z') \in \ol{t},\\
	(z,y)&=(z, t^{\times}z)+(0, m') \mbox{ for some } m' \in \M_2.
\end{align*}
Since $z \in \dom(T^*) \subseteq \N_2,$ $z' \in \ran(\ol{t}) \subseteq \N_2$ and $z=z'+m,$ it holds  that $m=0$ and $z=z'.$
Then, from the fact that $(x,z)=(x,z') \in \ol{t}$ and  $(z, t^{\times}z) \in t^{\times}$ it follows that $(x,t^{\times}z) \in t^{\times} \ol{t}.$ Hence, since $y=t^{\times}z+m',$ 
$$(x,y)=(x,t^{\times}z)+(0,m') \in t^{\times}\ol{t}  \ \hat{\oplus} ( \{0\} \times\M_2).$$ Therefore
\begin{equation} \label{T*T}
	T^*T \subset t^{\times}\ol{t}  \ \hat{\oplus} \ (\{0\}  \times\M_2).
\end{equation}
By Theorem \ref{thmVN},  $T^*T$ is a nonnegative selfadjoint linear relation in $\St^{\perp}.$ Then $T^*T$  admits a unique decomposition as in \eqref{Tdecom}: 
$$T^*T=(T^*T)_0 \ \hat{\oplus} \ (\{0\} \times \M_2),$$
where $(T^*T)_0$ is a selfadjoint operator in $\cldom(T^*T)=\N_2.$
By \eqref{T*T}, $(T^*T)_0 \subset t^{\times}\ol{t}$ and, since $(T^*T)_0$ and  $t^{\times}\ol{t}$ are selfadjoint operators in $\N_2,$ equality holds, i.e., $(T^*T)_0=t^{\times}\ol{t}.$ Hence
$$T^*T=(T^*T)_0 \ \hat{\oplus} \ (\{0\} \times \M_2)=d_0^{1/2}D_f \ol{D_fd_0^{1/2}|_{\mc{D}_2}}\ \hat{\oplus} \ ( \{0\} \times\M_2).$$
\end{proof}

Consider the set
$$\mc{M}(A,\St^{\perp}):=\{X \mbox{ l.r. in } \HH: 0 \leq X \leq A,  \ran(X) \subseteq \St^{\perp}\}.$$
In \cite{Arlinskii}, Arlinski\u{\i } proved that the set $\mc{M}(A,\St^{\perp})$ has a maximum element and defined the Schur complement of $A$ to $\St$ denoted by $A_{/ \St}$ as the maximum of $\mc{M}(A,\St^{\perp}).$
In what follows we give an alternate proof of the existence of the Schur complement as well as a formula for $A_{/ \St}$  using the matrix decomposition of $A$ when $P_{\St} (\dom(A)) \subseteq \dom(A).$ 

\begin{thm} \label{thmSchur} Let $A$ be a linear relation in $\HH,$ let $\St$ be a closed subspace of $\HH$ such that $P_{\St} (\dom(A)) \subseteq \dom(A)$ and consider the matrix representation of $A$ with respect to $\St \oplus \St^{\perp}$  in \eqref{repreAS}. Then the set $\mc{M}(A,\St^{\perp})$ has a maximum element $A_{/ \St}.$ Moreover, 
$$A_{/ \St}=\matriz{0}{0}{0}{T^*T},$$ where $T:=\ol{D_gd^{1/2}|_{\mc{D}_2}}.$
\end{thm}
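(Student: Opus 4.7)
The plan is to prove that $X_0:=\matriz{0}{0}{0}{T^*T}$ is precisely the maximum of $\mc{M}(A,\St^{\perp})$. By Theorem \ref{thmVN}, $T^*T$ is a nonnegative selfadjoint linear relation, so $X_0$ is a nonnegative selfadjoint relation with $\ran(X_0)\subseteq \St^{\perp}$. It remains to verify $X_0\leq A$ (so that $X_0\in \mc{M}(A,\St^{\perp})$) and that every $Y\in \mc{M}(A,\St^{\perp})$ satisfies $Y\leq X_0$.

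The engine for both steps is an explicit form identity on the core $\dom(A)=\mc{D}_1\oplus \mc{D}_2$ of $A_0^{1/2}$. Expanding $\PI{A_0(h_1+h_2)}{h_1+h_2}$ via the matrix representation \eqref{repreA0}, the factorizations $b_0=a_0^{1/2}fd_0^{1/2}$ and $c_0=d_0^{1/2}f^*a_0^{1/2}$ from Theorem \ref{thmRMRL}, and the identity $\|D_fx\|^2=\|x\|^2-\|fx\|^2$, one obtains
\begin{equation*}
\mathfrak{t}_A[h_1+h_2]=\|a_0^{1/2}h_1+fd_0^{1/2}h_2\|^2+\|D_fd_0^{1/2}h_2\|^2.
\end{equation*}
By \eqref{eqT} and Lemma \ref{lemaRL1}, $T_0=\ol{D_fd_0^{1/2}|_{\mc{D}_2}}$ and $(T^*T)_0^{1/2}=|T_0|$, so $\mathfrak{t}_{X_0}[h_1+h_2]=\|T_0h_2\|^2=\|D_fd_0^{1/2}h_2\|^2$ on the core. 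Hence $\mathfrak{t}_{X_0}\leq \mathfrak{t}_A$ there, and the standard form-closure argument (approximating $h\in \dom(A_0^{1/2})$ by elements of $\dom(A)$ in the form norm of $A$ and using closedness of $\mathfrak{t}_{X_0}$) promotes this pointwise inequality to $X_0\leq A$ on all of $\dom(A_0^{1/2})$.

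For the maximality step, take $Y\in \mc{M}(A,\St^{\perp})$. Since $Y=Y^*$ and $\ran(Y)\subseteq \St^{\perp}$, one has $\St\subseteq \ker(Y)\subseteq \dom(Y)$, and in particular $P_{\St}(\dom(Y))\subseteq \dom(Y)$; Theorem \ref{thmRMRL} applied to $Y$ then forces $Y=\matriz{0}{0}{0}{\delta}$ with $\delta\geq 0$ selfadjoint in $\St^{\perp}$ (the off-diagonal blocks vanish because the $(1,1)$-block of $Y$ is the zero operator on $\St$, hence so is its square root). The form identity above and $Y\leq A$ yield, for every $h_1\in \mc{D}_1$ and $h_2\in \mc{D}_2$,
\begin{equation*}
\|\delta_0^{1/2}h_2\|^2\leq \|a_0^{1/2}h_1+fd_0^{1/2}h_2\|^2+\|D_fd_0^{1/2}h_2\|^2.
\end{equation*}
The decisive observation is that $f=V_1^*V_2$ constructed in the proof of Theorem \ref{thmRMRL} satisfies $\ran(f)\subseteq \clran(a_0^{1/2})=\ol{a_0^{1/2}(\mc{D}_1)}$, so the first summand on the right can be driven to zero by choice of $h_1\in \mc{D}_1$. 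Taking the infimum over $h_1\in \mc{D}_1$ gives $\mathfrak{t}_Y[h_2]\leq \mathfrak{t}_{X_0}[h_2]$ on $\mc{D}_2$, and a second form-closure extension (with $\mc{D}_2$ a core of the closed operator $T_0$, by definition of $T$) propagates the inequality to $Y\leq X_0$ on $\dom((T^*T)_0^{1/2})$.

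The main obstacle I anticipate is the careful bookkeeping of cores and form domains in the two closure arguments: one must verify that $\mc{D}_1\oplus\mc{D}_2$ is a form core for $A$ (via Proposition \ref{proprepreA}) and that $\mc{D}_2$ is a form core for $X_0$ (equivalent to $\mc{D}_2$ being a core of $T_0$, which holds by construction of $T$), and then invoke closedness of $\mathfrak{t}_A$, $\mathfrak{t}_{X_0}$, and $\mathfrak{t}_Y$ at the correct moments to pass from the pointwise inequalities on cores to both the required form-domain inclusions, namely $\dom(A_0^{1/2})\subseteq \dom((X_0)_0^{1/2})$ in the first direction and $\dom((X_0)_0^{1/2})\subseteq \dom(Y_0^{1/2})$ in the second.
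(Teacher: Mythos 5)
Your proposal is correct and its overall architecture coincides with the paper's: exhibit the candidate $C=\matriz{0}{0}{0}{T^*T}$, prove $C\le A$ by comparing forms on the core $\mc{D}_1\oplus\mc{D}_2$ of $A_0^{1/2}$, prove maximality over $\mc{M}(A,\St^{\perp})$, and transfer both inequalities from the core to the full form domains by closure arguments. Your identity $\PI{A_0h}{h}=\Vert a_0^{1/2}h_1+fd_0^{1/2}h_2\Vert^2+\Vert D_fd_0^{1/2}h_2\Vert^2$ is exactly the paper's factorization $A_0=Z^*\Gamma Z$ with $\Gamma=\matriz{1}{f}{f^*}{1}$, and dropping the first summand is the paper's passage to $\Gamma_{/\N_1}=\matriz{0}{0}{0}{1-f^*f}$. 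Where you genuinely diverge is the maximality step: the paper invokes Lemma \ref{lemalr} to produce a contraction $W$ with $WA_0^{1/2}\subset X_0^{1/2}$, writes $X_0^{1/2}h_2=W'd_0^{1/2}h_2$ with $W'=WV_2$, and deduces $W'^*W'\le 1-f^*f$ from the bounded-operator Schur complement of $\Gamma$; you instead minimize the right-hand side of the form identity over $h_1\in\mc{D}_1$, using $\ran(f)\subseteq\ran(V_1^*)\subseteq\clran(a_0^{1/2})=\ol{a_0^{1/2}(\mc{D}_1)}$ to drive the first summand to zero. Your route is more direct and bypasses Lemma \ref{lemalr} entirely (it is the variational characterization of the bounded Schur complement applied by hand), while the paper's route isolates the analytic content in a reusable operator inequality.

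One point needs repair. You assert $T_0=\ol{D_fd_0^{1/2}|_{\mc{D}_2}}$ and $\Vert T_0h_2\Vert=\Vert D_fd_0^{1/2}h_2\Vert$ on $\mc{D}_2$; this presupposes that $t:=D_fd_0^{1/2}|_{\mc{D}_2}$ is closable, which is not available at this stage (it is established only later, in Lemma \ref{Propcloslr}, under an additional hypothesis). In general one only has $th_2=T_0h_2+z$ with $z\in\mul(T)$, hence $\Vert T_0h_2\Vert\le\Vert th_2\Vert$. That inequality is all you need for the direction $C\le A$, but in the maximality step the final closure argument must be run as in the paper: for $h_2\in\dom(T_0)$ choose $(h_n,y_n)\in D_gd^{1/2}|_{\mc{D}_2}$ converging to $(h_2,T_0h_2)$ inside the graph of $T$, which forces $\Vert th_n\Vert\to\Vert T_0h_2\Vert$ along that particular sequence; merely knowing that $\mc{D}_2$ is a core of $T_0$ gives $T_0h_n\to T_0h_2$ but does not control $\Vert th_n\Vert$ from above. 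With that adjustment your argument closes.
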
	

\begin{proof} Write $A=A_0 \ \hat{\oplus} \ A_{\mul(A)}$ and set $C:=\matriz{0}{0}{0}{T^*T}.$ Then $\ran(C)=\ran(T^*T)=\ran((T^*T)_0) \oplus \M_2 \subseteq \N_2 \oplus \M_2= \St^{\perp}$ and $C^*=C \geq 0.$ 
Suppose that $T$ is written as $T=T_0 \ \hat{\oplus} \ (\{0\} \times \mul(T))$ as in \eqref{Tdecom}. 
Let $C_0$ be the operator part of $C$ then, by \cite[Proposition 2.7]{Hassi}, 
$$\PI{C_0^{1/2}u}{C_0^{1/2}v}=\PI{T_0u_2}{T_0v_2},$$ 
for every  $u=\vecd{u_1}{u_2}, v=\vecd{v_1}{v_2} \in \dom(C_0^{1/2})=\St \oplus \dom(T_0).$

Then, since $\mc{D}_2 \subseteq \dom(T)=\dom(T_0)$ 
$$
\dom(A)=\mc{D}_1 \oplus \mc{D}_2 \subseteq \St \oplus \dom(T_0)=\dom(C_0^{1/2}).	
$$
	
Let \eqref{repreA0} be the matrix decomposition of $A_0$ (in $\cldom(A)$) with respect to $\N_1 \oplus \N_2.$ 
Let $V_1$ and $V_2$ be the partial isometries given in the proof of Theorem \ref{thmRMRL} such that
$$V_1a_0^{1/2}=A_0^{1/2} \mbox{ on } \mc{D}_1 \mbox{ and } V_2d_0^{1/2}=A_0^{1/2} \mbox{ on } \mc{D}_2,$$
and  $f={V_1}^*V_2.$
Then, by Corollary \ref{propDP}, $A_0=Z^*\Gamma Z,$ where $\Gamma=\matriz{1}{f}{f^*}{1}$ and $Z=\matriz{a_0^{1/2}|_{\mc{D}_1}}{0}{0}{d_0^{1/2}|_{\mc{D}_2}}.$ 
Let $h=\vecd{h_1}{h_2} \in \mc{D}_1 \oplus \mc{D}_2.$ Then
\begin{align*}
\PI{A_0h}{h}&=\PI{\matriz{1}{f}{f^*}{1}\vecd{a_0^{1/2}h_1}{d_0^{1/2}h_2}}{\vecd{a_0^{1/2}h_1}{d_0^{1/2}h_2}}\\
& \geq  \PI{\matriz{1}{f}{f^*}{1}_{/ \N_1}\vecd{a_0^{1/2}h_1}{d_0^{1/2}h_2}}{\vecd{a_0^{1/2}h_1}{d_0^{1/2}h_2}}\\
&=\PI{\matriz{0}{0}{0}{1-f^*f}\vecd{a_0^{1/2}h_1}{d_0^{1/2}h_2}}{\vecd{a_0^{1/2}h_1}{d_0^{1/2}h_2}}\\
&=\PI{D_fd_0^{1/2}h_2}{D_fd_0^{1/2}h_2}=\Vert t h_2 \Vert^2.
\end{align*}
Let us see that $$\Vert th_2 \Vert^2 \geq \Vert T_0 h_2 \Vert^2.$$
In fact, $(h_2,th_2) \in t \subseteq T.$ Since $T=T_0 \ \hat{\oplus} \ (\{0\} \times \mul(T)),$ $$(h_2,th_2)=(h_2,T_0h_2)+(0,z)$$ for some $z \in \mul(T) .$
Then $th_2=T_0h_2+z.$ Since $T_0h_2 \in \ran(T_0) \subseteq \cldom(T^*)\subseteq \mul(T)^{\perp}$ and $\mc{D}_1 \subseteq \St$ it follows that
$$\Vert t h_2 \Vert^2=\Vert T_0 h_2 \Vert^2 + \Vert z \Vert^2 \geq \Vert T_0 h_2 \Vert^2=\Vert C_0^{1/2}h \Vert^2.$$ 
Then
$$\PI{A_0h}{h}=\Vert A_0^{1/2}h\Vert^2 \geq \Vert C_0^{1/2}h \Vert^2 \mbox{ for every } h \in \dom(A).$$
Since $\dom(A)$ is a core for $A_0^{1/2},$ by \cite[Lemma 10.10]{Schmudgen}, it follows that $\dom(A_0^{1/2}) \subseteq \dom(C_0^{1/2}) $ and $\Vert A_0^{1/2}h\Vert \geq \Vert C_0^{1/2}h \Vert \mbox{ for every } h \in \dom(A_0^{1/2}).$	Hence, $A\geq C.$ So that $$C \in \mc{M}(A,\St^{\perp}).$$
	
	Let $X \in \mc{M}(A,\St^{\perp}).$ Then, by Lemma \ref{lemalr}, there exists a contraction $W \in L(\HH)$ such that
	$$X_0^{1/2}\supset WA_0^{1/2},$$ where $X_0$ is the operator part of $X.$ Recall that $X_0$ is a nonnegative selfadjoint linear operator in $\cldom(X).$
	Also, if $h_2 \in \mc{D}_2 \subseteq \dom(A) =\dom(A_0) \subseteq \dom(A_0^{1/2}),$ 
	$$X_0^{1/2}h_2=WA_0^{1/2}h_2=WV_2d_0^{1/2}=W'd_0^{1/2}h_2,$$ with $W'=WV_2.$  
	Also, since $X \leq A,$ we have that $\dom(A) \subseteq \dom(A_0^{1/2}) \subseteq \dom(X_0^{1/2})$ and 
	$$\PI{X_0^{1/2}h}{X_0^{1/2}h} \leq \PI{A_0^{1/2}h}{A_0^{1/2}h}= \PI{A_0h}{h} \mbox{ for every } h \in \dom(A).$$
	Let $h=\vecd{h_1}{h_2} \in \mc{D}_1 \oplus \mc{D}_2.$ Then, since $\mc{D}_1 \subseteq \St \subseteq \ker(X) =\ker(X_0),$ 
	\begin{align*}
		\PI{X_0^{1/2}h}{X_0^{1/2}h}&=\PI{X_0^{1/2}h_2}{X_0^{1/2}h_2}=\PI{W'd_0^{1/2}h_2}{W'd_0^{1/2}h_2}\\
		&=\PI{\matriz{0}{0}{0}{W'^*W'}\vecd{0}{d_0^{1/2}h_2}}{\vecd{0}{d_0^{1/2}h_2}}\\
		&=\PI{\matriz{0}{0}{0}{W'^*W'}\vecd{a_0^{1/2}h_1}{d_0^{1/2}h_2}}{\vecd{a_0^{1/2}h_1}{d_0^{1/2}h_2}}\\
		& \leq \PI{A_0h}{h}=\PI{\matriz{1}{f}{f^*}{1}\vecd{a_0^{1/2}h_1}{d_0^{1/2}h_2}}{\vecd{a_0^{1/2}h_1}{d_0^{1/2}h_2}}.
	\end{align*}

	Since $\mc{D}_1$ is a core of $a_0^{1/2}$ and  $\mc{D}_2$ is a core of $d_0^{1/2},$ we have that $\ol{a_0^{1/2}(\mc{D}_1)}=\clran{(a_0^{1/2})}$ and $\ol{d_0^{1/2}(\mc{D}_2)}=\clran{(d_0^{1/2})}.$ Also, $\ker(d_0^{1/2})=\ker(V_2) \subseteq \ker(W') \cap \ker(f)$ and $\ker(a_0^{1/2}) \subseteq \ker(f^*).$ Hence, by the last inequality, it follows that
	$$0 \leq \matriz{0}{0}{0}{W'^*W'} \leq \matriz{1}{f}{f^*}{1},$$ where the inequality holds in the Hilbert space $\cldom(A)=\N_1 \oplus \N_2.$ 
	Therefore
	$$\matriz{0}{0}{0}{W'^*W'} \leq \matriz{1}{f}{f^*}{1}_{/ \N_1}=\matriz{0}{0}{0}{1-f^*f}.$$  So that $W'^*W' \leq 1-f^*f.$ Then
	\begin{align*}
		\PI{X_0^{1/2}h}{X_0^{1/2}h}&=\PI{W'd_0^{1/2}h_2}{W'd_0^{1/2}h_2}\\ 
		&\leq \PI{(1-f^*f)^{1/2}d_0^{1/2}h_2}{(1-f^*f)^{1/2}d_0^{1/2}h_2}\\
		&=\PI{D_fd_0^{1/2}h_2}{D_fd_0^{1/2}h_2}=\Vert D_fd_0^{1/2}h_2 \Vert^2=\Vert th_2 \Vert^2.
	\end{align*}

	Next we show  that $C \geq X.$ Let $h=\vecd{h_1}{h_2} \in \dom(C_0^{1/2}) = \St \oplus \dom(T_0).$ Then $h_2 \in \dom(T_0).$ So that there exists $k \in \N_2$ such that $(h_2,k) \in T_0 \subset T.$ Since $T_0$ is an operator, it follows that $k=T_0h_2.$
	Also,  since $(h_2,k) \in T=\ol{D_gd^{1/2}|_{\mc{D}_2}},$ there exists a sequence
	$(h_n, y_n)_{n \geq 1} \in D_gd^{1/2}|_{\mc{D}_2}$ such that $\underset{n \ra \infty}{\lim}(h_n,y_n) = (h_2,k).$ 
	
	Since $(h_n, y_n) \in D_gd^{1/2}|_{\mc{D}_2}=D_fd_0^{1/2}|_{\mc{D}_2} \ \hat{\oplus} \ (\{0\} \times \M_2)$ for every $n \in \mathbb{N},$ then $h_n \in \mc{D}_2$ and, for every $n \in \mathbb{N},$ there exits $m_n \in \M_2$ such that $$(h_n,y_n)=(h_n, D_fd_0^{1/2}h_n)+(0,m_n).$$
	Then, $\underset{n \ra \infty}{\lim}h_n =h_2$ and $\underset{n \ra \infty}{\lim} D_fd_0^{1/2}h_n+m_n =k.$ But, since $D_fd_0^{1/2}h_n \in \N_2$ for every $n \in \mathbb{N}$  and $k \in \N_2 \perp \M_2,$ it follows that $\underset{n \ra \infty}{\lim} m_n=0$ and then $\underset{n \ra \infty}{\lim} D_fd_0^{1/2}h_n=\underset{n \ra \infty}{\lim} th_n = k.$
	 From $$\Vert X_0^{1/2}h_n \Vert^2 \leq \Vert t h_n\Vert^2 \mbox{ for every } n \in \mathbb N,$$ it follows that
	 $(X_0^{1/2}h_n)_{n \geq 1}$ is a Cauchy sequence (so it converges). From the fact that $X_0^{1/2}$ is a closed operator, $h_2 \in \dom(X_0^{1/2})$  and $\underset{n \ra \infty}{\lim} X_0^{1/2}h_n =X_0^{1/2}h_2.$ Then, since $\St \subseteq \ker(X_0)=\ker(X_0^{1/2}) \subseteq \dom(X_0^{1/2}),$ $$\dom(C_0^{1/2})= \St  \oplus \dom(T_0) \subseteq \dom(X_0^{1/2}).$$
	Therefore, since $h_1 \in \ker(X_0^{1/2}),$
\begin{align*}
\Vert X_0^{1/2}h \Vert&=\Vert X_0^{1/2}h_2 \Vert=\underset{n \ra \infty}{\lim} \Vert X_0^{1/2}h_n \Vert \\
&\leq \underset{n \ra \infty}{\lim} \Vert th_n \Vert =\Vert k \Vert=\Vert T_0 h_2 \Vert=\Vert C_0^{1/2} h\Vert.
\end{align*}
\end{proof}

\begin{obs} 
	Suppose that $A \geq 0$ is (a densely defined) operator in $\HH.$ If $X \in \mc{M}(A,\St^{\perp})$ then $X$ is an operator in $\HH.$ In fact, if  $X \in \mc{M}(A,\St^{\perp})$ then $\dom(A^{1/2}) \subseteq \dom(X^{1/2})$ and then $$\mul(X) =\mul(X^{1/2}) =\dom(X^{1/2})^{\perp} \subseteq \dom(A^{1/2})^{\perp} =\mul(A)=\{0\}.$$
	In this case, $\N_1=\ol{\mc{D}_1 }=\St$ and $\M_1=\M_2=\{0\}.$ So that $f=g,$ $d=d_0,$ $T^*T=t^{\times}\ol{t}$ and, 
	$$A_{/ \St}=\matriz{0}{0}{0}{T^*T}=\max \ \{X \mbox{ l.o. in } \HH: 0 \leq X \leq A,  \ran(X) \subseteq \St^{\perp}\}.$$
\end{obs}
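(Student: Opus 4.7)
The plan is a two-stage argument: first exhibit $C := \matriz{0}{0}{0}{T^*T}$ as an element of $\mc{M}(A,\St^\perp)$, and then show it dominates every other member. Throughout, I would work with the operator part $A_0$ and the matrix decomposition $A_0 = \matriz{a_0}{b_0}{c_0}{d_0}$ with respect to $\N_1 \oplus \N_2$ produced by Proposition \ref{proprepreA}, and exploit the factorization $A=(WZ)^{*}WZ$ from Corollary \ref{propDP}.

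For the membership stage, that $C=C^{*}\geq 0$ and $\ran(C)\subseteq \St^{\perp}$ is immediate from the block form and Theorem \ref{thmVN}. The substantive point is $C \leq A$. Using Corollary \ref{propDP}, for every $h=\vecd{h_1}{h_2}\in\dom(A)$ one has
$$\PI{A_0 h}{h}=\PI{\Gamma Zh}{Zh},\qquad \Gamma:=\matriz{1}{f}{f^*}{1}\in L(\cldom(A)).$$
The classical bounded Schur-complement inequality $\Gamma \geq \matriz{0}{0}{0}{1-f^*f}$ then gives $\|A_0^{1/2}h\|^{2}\geq \|D_f d_0^{1/2}h_2\|^{2}=\|th_2\|^{2}$ for $t:=D_f d_0^{1/2}|_{\mc{D}_2}$. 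Lemma \ref{lemaRL1} describes $T^*T=t^{\times}\ol{t}\ \hat\oplus\ (\{0\}\times\M_2)$ and $T=\ol{t}\ \hat\oplus\ (\{0\}\times\M_2)$; since $\ran(T_0)\subseteq \N_2\perp \M_2$, the difference $th_2-T_0 h_2$ lies in $\M_2$, so $\|th_2\|^{2}\geq \|T_0 h_2\|^{2}=\|C_0^{1/2}h\|^{2}$ by \cite[Proposition 2.7]{Hassi}. The pointwise bound $\|A_0^{1/2}h\|\geq \|C_0^{1/2}h\|$ on $\dom(A)$ would then be promoted to all of $\dom(A_0^{1/2})$ via \cite[Lemma 10.10]{Schmudgen}, using that $\dom(A)$ is a core of $A_0^{1/2}$.

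For the maximality stage, let $X\in\mc{M}(A,\St^{\perp})$. Lemma \ref{lemalr} furnishes a contraction $W$ with $WA_0^{1/2}\subset X_0^{1/2}$, and combining with the partial isometry $V_2$ from the proof of Theorem \ref{thmRMRL} (satisfying $V_2 d_0^{1/2}=A_0^{1/2}$ on $\mc{D}_2$), the choice $W':=WV_2$ gives $X_0^{1/2}h_2=W'd_0^{1/2}h_2$ on $\mc{D}_2$. The range condition $\ran(X)\subseteq \St^{\perp}$ forces $\St\subseteq \ker(X_0^{1/2})$, so $\|X_0^{1/2}h\|^{2}=\|W'd_0^{1/2}h_2\|^{2}$ for $h=\vecd{h_1}{h_2}\in\mc{D}_1\oplus\mc{D}_2$. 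The inequality $\PI{X_0 h}{h}\leq \PI{A_0 h}{h}$, together with the density of $a_0^{1/2}(\mc{D}_1)$ and $d_0^{1/2}(\mc{D}_2)$ in the respective closed ranges, translates into the bounded operator inequality
$$\matriz{0}{0}{0}{W'^*W'}\leq \matriz{1}{f}{f^*}{1}$$
on $\cldom(A)$. Taking the bounded Schur complement on the right yields $W'^*W'\leq 1-f^*f$, hence $\|X_0^{1/2}h_2\|^{2}\leq \|th_2\|^{2}$ on $\mc{D}_2$.

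The final step extends this bound to all of $\dom(T_0)$. Given $h_2\in\dom(T_0)$, since $T=\ol{t}\ \hat\oplus\ (\{0\}\times\M_2)$ and $\ran(T_0)\perp \M_2$, one can pick a sequence $(h_n)\subset \mc{D}_2$ with $h_n\to h_2$ and $th_n\to T_0 h_2$; the uniform bound just obtained forces $(X_0^{1/2}h_n)$ to be Cauchy, and closedness of $X_0^{1/2}$ delivers $h_2\in\dom(X_0^{1/2})$ with $\|X_0^{1/2}h_2\|\leq \|T_0 h_2\|$. Together with $\St\subseteq \ker(X_0^{1/2})$ this yields $\dom(C_0^{1/2})=\St\oplus\dom(T_0)\subseteq \dom(X_0^{1/2})$ and the norm inequality on this domain, i.e.\ $X\leq C$. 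The main obstacle I foresee is exactly this closure step: one must disentangle the multivalued part $\{0\}\times\M_2$ of $T$ from the operator part $T_0$ when passing to the limit, so that $th_n$ tracks $T_0 h_2$ and not a stray element of $\M_2$; the orthogonality $\ran(T_0)\perp \M_2$ is the key ingredient that makes this separation possible.
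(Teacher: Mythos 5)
Your write-up is a correct argument, but it proves the wrong statement: what you have reproduced, essentially step for step, is the paper's own proof of Theorem \ref{thmSchur} (membership of $( \begin{smallmatrix} 0&0\\0&T^*T\end{smallmatrix} )$ in $\mc{M}(A,\St^{\perp})$ via the bounded Schur-complement inequality for $\Gamma=( \begin{smallmatrix} 1&f\\f^*&1\end{smallmatrix} )$, then maximality via Lemma \ref{lemalr}, the bound $W'^*W'\leq 1-f^*f$, and the closure argument on $\dom(T_0)$). The statement at hand, however, is the Remark that \emph{follows} that theorem, and its substance is the degeneration that occurs when $A$ is a densely defined nonnegative selfadjoint operator. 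None of that substance appears in your proposal.

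Concretely, three things are missing. First, the claim that every $X\in\mc{M}(A,\St^{\perp})$ is itself an operator: by the definition of the order, $X\leq A$ gives $\dom(A^{1/2})\subseteq\dom(X^{1/2})$, hence $\mul(X)=\mul(X^{1/2})=\dom(X^{1/2})^{\perp}\subseteq\dom(A^{1/2})^{\perp}=\mul(A)=\{0\}$; this is precisely what licenses replacing ``l.r.''\ by ``l.o.''\ in the description of the maximum, and it is nowhere in your argument. Second, the simplifications: since $\mul(A)=\{0\}$ one has $\M_1=\St\cap\mul(A)=\{0\}$ and $\M_2=\St^{\perp}\cap\mul(A)=\{0\}$, so Lemma \ref{lemaRMN2} gives $\N_1=\ol{\mc{D}_1}=\St$ and $\N_2=\St^{\perp}$, whence $g=f$, $d=d_0$, and the summand $\{0\}\times\M_2$ in Lemma \ref{lemaRL1} disappears, leaving $T^*T=t^{\times}\ol{t}$. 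Third, only after these two observations does the displayed identity $A_{/\St}=\max\{X \mbox{ l.o. in }\HH: 0\leq X\leq A,\ \ran(X)\subseteq\St^{\perp}\}$ follow from the already-established Theorem \ref{thmSchur}. Since that theorem is available to be quoted, the re-derivation you supply is not needed here; what is needed is the short domain/multivalued-part argument above.
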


\bigskip
In a similar way, we now define $A_{\St}$ the \emph{compression} of $A.$ For this, consider the row linear relation 
$$S:=\vect{a^{1/2}|_{\mc{D}_1}}{gd^{1/2}|_{\mc{D}_2}} \subseteq \HH \times \St$$ with $\dom(S)= \mc{D}_1\oplus \mc{D}_2=\dom(A).$  
Define $A_{ \St}$ by $$A_{\St}:=S^*\ol{S}.$$ Then, by Theorem \ref{thmVN}, $A_{\St}$ is a nonnegative selfadjoint linear relation $\HH.$  

\begin{lema}\label{lemaRL2} Under the above hypotheses, $\ol{S}$ is decomposable and
$$A_{ \St}=s^{\times} \ol{s}\ \hat{\oplus} \ ( \{0\} \times\mul(A)),$$
where $s: \mc{D} \ra \N_1$ is the closable linear operator defined by \begin{equation} \label{sdef}
s:=\vect{a_0^{1/2}|_{\mc{D}_1}}{fd_0^{1/2}|_{\mc{D}_2}}
\end{equation} and $s^{\times}$ is the adjoint of $s$ when viewed as an operator from $\cldom(S)$ to $\cldom(S^*).$
\end{lema}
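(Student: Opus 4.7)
The plan is to mirror the strategy of Lemma \ref{lemaRL1}, with a row relation replacing the column one. I would first unfold $S$ using the already-established decompositions $a^{1/2}=a_0^{1/2}\ \hat{\oplus}\ (\{0\}\times\M_1)$, $d^{1/2}=d_0^{1/2}\ \hat{\oplus}\ (\{0\}\times\M_2)$ and the block form $g=\matriz{f}{0}{0}{0}$ with respect to $\N_2\oplus\M_2\to\N_1\oplus\M_1$. Since $g$ annihilates $\M_2$ and $\ran(d_0^{1/2}|_{\mc{D}_2})\subseteq\N_2$, one computes $gd^{1/2}|_{\mc{D}_2}=fd_0^{1/2}|_{\mc{D}_2}$; combined with $a^{1/2}|_{\mc{D}_1}=a_0^{1/2}|_{\mc{D}_1}\ \hat{\oplus}\ (\{0\}\times\M_1)$, this yields
$$S=s\ \hat{\oplus}\ (\{0\}\times\M_1),$$
with $s$ the operator in \eqref{sdef}, taking values in $\N_1$.

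Next, since $a_0^{1/2}$ and $d_0^{1/2}$ are closed (selfadjoint on $\N_1,\N_2$) with cores $\mc{D}_1,\mc{D}_2$ and $f$ is bounded, the operator $s$ is closable and $\ran(\ol{s})\subseteq\clran(s)\subseteq\N_1$. Passing to $\ol{S}$, the orthogonality $\St=\N_1\oplus\M_1$ forces $\mul(\ol{S})=\M_1$: if $(x_n,y_n)\in S$ satisfies $x_n\to 0$ and $y_n\to y$, the decomposition $y_n=s(x_n)+m_n$ with $s(x_n)\in\N_1$ and $m_n\in\M_1$ gives $s(x_n)\to P_{\N_1}y$, whence closability of $s$ forces $P_{\N_1}y=0$. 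Therefore
$$\ol{S}=\ol{s}\ \hat{\oplus}\ (\{0\}\times\M_1),$$
and, since $\ran(\ol{s})\subseteq\N_1=\mul(\ol{S})^{\perp}\cap\St\subseteq\cldom(S^*)$, Theorem \ref{TdecompHassi} certifies that $\ol{S}$ is decomposable with operator part $(\ol{S})_0=\ol{s}$.

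Finally, I would apply Theorem \ref{thmVN} to the closed relation $\ol{S}$. Because $\cldom(\ol{S})=\cldom(A)$ and $\cldom(S^*)=\N_1$, the operator $s^{\times}$ of the statement agrees with $((\ol{S})_0)^{\times}$. Formula \eqref{TTLR3} then delivers $(S^*\ol{S})_0=s^{\times}\ol{s}$ and \eqref{TTLR2} gives $\mul(S^*\ol{S})=\mul(S^*)=\dom(S)^{\perp}=\dom(A)^{\perp}=\mul(A)$, which assemble to the desired identity $A_{\St}=s^{\times}\ol{s}\ \hat{\oplus}\ (\{0\}\times\mul(A))$. The main technical point is pinning down $\mul(\ol{S})=\M_1$ exactly (not some larger subspace), since this is what lets one identify the operator part of $\ol{S}$ with $\ol{s}$; once that is settled, the remaining computation is a direct transcription of the argument used in Lemma \ref{lemaRL1}.
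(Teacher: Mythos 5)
Your overall route is the same as the paper's: unfold $S$ as $s\ \hat{\oplus}\ (\{0\}\times\M_1)$, pass to closures, identify the operator part of $\ol{S}$ with $\ol{s}$, and invoke Theorem \ref{thmVN}. The one genuinely nontrivial point, however, is precisely the one you dispatch with an invalid argument: the closability of the row operator $s$. You assert it follows ``since $a_0^{1/2}$ and $d_0^{1/2}$ are closed with cores $\mc{D}_1,\mc{D}_2$ and $f$ is bounded.'' That is not a valid deduction. A row operator $\vect{T_1}{T_2}$ defined on $\dom(T_1)\oplus\dom(T_2)$ has adjoint the column $\vecd{T_1^{*}}{T_2^{*}}$ with domain $\dom(T_1^{*})\cap\dom(T_2^{*})$, so it is closable if and only if this intersection is dense; by von Neumann's classical example there are densely defined selfadjoint operators with $\dom(T_1)\cap\dom(T_2)=\{0\}$, and for such a pair the row $\vect{T_1}{T_2}$ is not closable even though each entry is selfadjoint. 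Hence closability of $s$ cannot come from closability of its entries plus boundedness of $f$; it must use the specific structure of $A$.

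The paper's proof supplies exactly this missing ingredient: it computes $s^{\times}=\vecd{a_0^{1/2}}{d_0^{1/2}f^{*}}$ and shows $\dom(s^{\times})\supseteq a_0^{1/2}(\mc{D}_1)\oplus\ker(a_0^{1/2})$, which is dense in $\N_1$ because $\mc{D}_1$ is a core of $a_0^{1/2}$. The key inclusion $a_0^{1/2}(\mc{D}_1)\subseteq\dom(d_0^{1/2}f^{*})$ is where the structure of $A$ enters: it holds because $c_0=d_0^{1/2}f^{*}a_0^{1/2}$ is defined on all of $\mc{D}_1$ (Theorem \ref{thmRMRL}), i.e.\ $f^{*}a_0^{1/2}(\mc{D}_1)\subseteq\dom(d_0^{1/2})$. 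Since your identification of $\mul(\ol{S})$ with $\M_1$ and your final application of \eqref{TTLR3} both rest on $\ol{s}$ being an operator, the gap propagates through the rest of your argument. Once closability is established as above, the remainder of what you wrote is correct and coincides with the paper's proof (your shortcut $\mul(S^{*})=\dom(S)^{\perp}=\mul(A)$ in place of the paper's explicit computation of $S^{*}$ is fine).
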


\begin{proof} Since $a^{1/2}|_{\mc{D}_1}=a_0^{1/2}|_{\mc{D}_1} \ \hat{\oplus} \ (\{0\} \times \M_1)$ and $gd^{1/2}|_{\mc{D}_2}=fd_0^{1/2}|_{\mc{D}_2},$ it follows that
$$S=s \ \hat{\oplus} \ (\{0\} \times \M_1).$$ 
In fact, it is clear that $\ran(s)\subseteq \N_1$ and, since $\mul(gd^{1/2}|_{\mc{D}_2})=\{0\},$  $\mul(S)=\mul(a^{1/2}|_{\mc{D}_1}) + \mul(gd^{1/2}|_{\mc{D}_2}) =\M_1$ and $\dom(S)=\dom(A)=\dom(s).$ Also, if $(h,y) \in S$ then $(h,y)=\left(\vecd{h_1}{h_2},y_1+y_2\right)$ where $h_1 \in \mc{D}_1,$ $h_2 \in \mc{D}_2$ and $(h_1,y_1) \in  a^{1/2}|_{\mc{D}_1}$ and $(h_2,y_2) \in gd^{1/2}|_{\mc{D}_2}=fd_0^{1/2}|_{\mc{D}_2}.$
So that $(h_1,y_1)=(h_1,a_0^{1/2}h_1)+(0,m_1)$ for some $m_1 \in \M_1$ and $y_2=fd_0^{1/2}h_2.$ Hence
\begin{align*}
(h,y)&=\left(\vecd{h_1}{h_2},y_1+y_2\right)\\
&=\left(\vecd{h_1}{h_2},a_0^{1/2}h_1+fd_0^{1/2}h_2\right)+(0,m_1) \in s \ \hat{\oplus} \ (\{0\} \times \M_1).
\end{align*}
Then $S \subset s \ \hat{\oplus} \ (\{0\} \times \M_1)$ and, by \cite[Corollary 2.2]{Hassi}, $S=s \ \hat{\oplus} \ (\{0\} \times \M_1).$

The row operator  $s$ is closable, in fact, $s^{\times}=\vecd{a_0^{1/2}}{d_0^{1/2}f^*}$ and, as $a_0^{1/2}(\mc{D}_1)\subseteq \dom(a_0^{1/2}) \cap \dom(d_0^{1/2}f^*)$ and  $\ker(a_0^{1/2}) \subseteq \dom(a_0^{1/2}) \cap \ker(f^*),$
$$\dom(s^\times)\supseteq a_0^{1/2}(\mc{D}_1) \oplus \ker(a_0^{1/2})$$ which is dense in $\N_1.$
Then $\ol{s}$ is an operator. Moreover, by Theorem \ref{TdecompHassi}, $\ol{S}$ is decomposable and
$$\ol{S}=\ol{s} \  \hat{\oplus} \ (\{0\} \times \M_1).$$

Also, since $\mc{D}_1$ is a core of $a^{1/2}$ and $\mc{D}_2$ is a core of $d^{1/2},$ it follows that
$$S^*=\vecd{a^{1/2}}{d^{1/2}g^*},$$
$\mul(A_{\St})=\mul(S^*)=\mul(a^{1/2}) \oplus \mul(d^{1/2}g^*)=\M_1 \oplus \M_2=\mul(A)$ and, by Theorem \ref{thmVN},  the operator part of $ S^*\ol{S}$ is 
$(S^*\ol{S})_0=((\ol{S})_0)^\times (\ol{S})_0=s^\times\ol{s}.$ 
Then $$A_{ \St}=s^\times \ol{s} \ \hat{\oplus } \ (\{0\} \times \mul(A)).$$
\end{proof}

Let $V_1$ be the partial isometry given in the proof of Theorem \ref{thmRMRL}.
Then 
\begin{equation} \label{eqS}
s={V_1}^*A_0^{1/2} \mbox{ on } \dom(A).
\end{equation}

\begin{prop} Let $A \geq 0$ be a linear relation in $\HH$ and let $\St$ be a closed subspace of $\HH$ such that $P_{\St} (\dom(A)) \subseteq \dom(A).$ Then
	$$A \geq A_{\St}.$$ 
\end{prop}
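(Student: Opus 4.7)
The plan is to verify the two conditions in the definition of $\leq$ directly, by exploiting the factorisation of $s$ through $A_0^{1/2}$ that is recorded in \eqref{eqS} together with the core property.

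First I would reduce the problem to operator parts. By Lemma \ref{lemaRL2}, $A_\St$ is the nonnegative selfadjoint relation with operator part $(A_\St)_0 = s^\times \ol{s}$, where $s$ is the closable operator \eqref{sdef}. Since $(A_\St)_0 = (\ol{s})^\times \ol{s}$, a standard polar-type identity (applied to the closed operator $\ol{s}$) gives $\dom((A_\St)_0^{1/2}) = \dom(\ol{s})$ together with
\[
\|(A_\St)_0^{1/2}u\| = \|\ol{s}\,u\|, \qquad u \in \dom(\ol{s}).
\]
Hence to show $A \geq A_\St$ I must check that $\dom(A_0^{1/2}) \subseteq \dom(\ol{s})$ and that $\|\ol{s}\,u\| \leq \|A_0^{1/2}u\|$ on this common domain.

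Next, for $h \in \dom(A) = \dom(s)$, I would apply the factorisation \eqref{eqS}, namely $s = V_1^* A_0^{1/2}$ on $\dom(A)$, where $V_1$ is the partial isometry produced in the proof of Theorem \ref{thmRMRL}. Since $V_1^*$ is a contraction on $\N_1$, this immediately yields
\[
\|sh\| = \|V_1^* A_0^{1/2}h\| \leq \|A_0^{1/2}h\|, \qquad h \in \dom(A).
\]

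Finally, I would promote the inequality from $\dom(A)$ to $\dom(A_0^{1/2})$ by using that $\dom(A)$ is a core for $A_0^{1/2}$. Given $u \in \dom(A_0^{1/2})$, choose $h_n \in \dom(A)$ with $h_n \to u$ and $A_0^{1/2}h_n \to A_0^{1/2}u$. Because $V_1^* \in L(\N_1)$, one has $s h_n = V_1^* A_0^{1/2}h_n \to V_1^* A_0^{1/2}u$; in particular $(sh_n)$ converges. As $\ol{s}$ is closed, it follows that $u \in \dom(\ol{s})$ with $\ol{s}\,u = V_1^* A_0^{1/2} u$, and therefore
\[
\|(A_\St)_0^{1/2}u\| = \|\ol{s}\,u\| = \|V_1^* A_0^{1/2}u\| \leq \|A_0^{1/2}u\|.
\]
Since this holds for all $u \in \dom(A_0^{1/2}) \subseteq \dom((A_\St)_0^{1/2})$, the order inequality $A \geq A_\St$ is established. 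The only step that requires genuine care is the identification $\dom((A_\St)_0^{1/2}) = \dom(\ol{s})$ with the matching norms; it uses the general fact (implicit in Theorem \ref{thmVN} and \cite[Proposition 2.7]{Hassi}) that for a closed densely defined operator $T$, $(T^*T)^{1/2}$ coincides with $|T|$ on $\dom(T)$.
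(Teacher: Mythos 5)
Your proof is correct and follows essentially the same route as the paper: identify $\dom((A_{\St})_0^{1/2})=\dom(\ol{s})$ with $\Vert (A_{\St})_0^{1/2}u\Vert=\Vert\ol{s}\,u\Vert$ via Lemma \ref{lemaRL2} and \cite[Proposition 2.7]{Hassi}, use \eqref{eqS} to get $\Vert sh\Vert=\Vert V_1^*A_0^{1/2}h\Vert\leq\Vert A_0^{1/2}h\Vert$ on $\dom(A)$, and extend by the core property. The only difference is that you carry out the extension to $\dom(A_0^{1/2})$ explicitly through the closedness of $\ol{s}$ and the boundedness of $V_1^*$, where the paper simply invokes \cite[Lemma 10.10]{Schmudgen}.
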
	
\begin{proof}  Suppose that $(A_{\St})_0$ is the operator part of $A_{\St}$  then, by \cite[Proposition 2.7]{Hassi}, 
$$\PI{(A_{\St})_0^{1/2}u}{(A_{\St})_0^{1/2}v}=\PI{(\ol{S})_0u}{(\ol{S})_0v}=\PI{\ol{s}u}{\ol{s}v},$$ 
for every  $u, v \in \dom((A_{\St})_0^{1/2})=\dom((\ol{S})_0)=\dom(\ol{s}).$ Then
	\begin{align*}
		\dom(A)=\dom(s) \subseteq \dom((A_{\St})_0^{1/2}).
	\end{align*}
 Let $h=\vecd{h_1}{h_2} \in \mc{D}_1 \oplus \mc{D}_2=\dom(s).$ Then, by \eqref{eqS},
	\begin{align*} \Vert (A_{\St})_0^{1/2} h \Vert&=\Vert\ol{s} h \Vert=\Vert sh \Vert =\Vert V_{1}^*A_0^{1/2} h \Vert \leq \Vert A_0^{1/2} h \Vert=\Vert A_0^{1/2}h \Vert.
	\end{align*}	
	Hence, since $\dom(A)$ is a core of $A^{1/2},$ by \cite[Lemma 10.10]{Schmudgen}, $A \geq A_{\St}.$
\end{proof}

Define $$\LL:=\ol{A^{1/2}(\mc{D}_1)} \cap \cldom(A).$$
In the following we show that if the positive relations $A$ and $A^{1/2}$ admit a matrix representation with respect to $\St \oplus \St^{\perp}$ and $\LL \oplus \LL^{\perp},$ respectively, then $$A=A_{\St} + A_{ / \St}.$$

\begin{lema} \label{Propcloslr}  Let $A \geq 0$ be a linear relation in $\HH$ and let $\St$ be a closed subspace of $\HH$ such that $P_{\St} (\dom(A)) \subseteq \dom(A).$ Consider the matrix representation of $A$ with respect to $\St \oplus \St^{\perp}$ in \eqref{repreAS}. Then the following are equivalent:
	\begin{enumerate}
		\item $P_{\LL} (A^{1/2}(\dom(A)) \subseteq \dom(A^{1/2});$
		\item  $\dom(d^{1/2}g^*gd^{1/2}|_{\mc{D}_2})=\mc{D}_2;$ 
		\item  $\dom(d^{1/2}D_g^2d^{1/2}|_{\mc{D}_2})=\mc{D}_2.$ 
	\end{enumerate}
In this case, the linear relation $D_gd^{1/2}$ is decomposable.
\end{lema}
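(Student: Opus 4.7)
The plan is to establish the equivalence of (ii) and (iii) (immediate from the matrix form of $g$), then the equivalence of (i) and (iii), and finally the decomposability of $D_gd^{1/2}$.

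\textbf{(ii)$\Leftrightarrow$(iii).} By Lemma \ref{lemaRMN2}, $\St^{\perp}=\N_2\oplus\M_2$ and $\St=\N_1\oplus\M_1$, so the contraction $g$ from Theorem \ref{thmRMRL} has matrix form $g=\matriz{f}{0}{0}{0}$, whence $g^{*}g=\matriz{f^{*}f}{0}{0}{0}$ and $D_g^{2}=1-g^{*}g=\matriz{D_f^{2}}{0}{0}{1}$. Unwinding the composition, conditions (ii) and (iii) amount respectively to $f^{*}fd_0^{1/2}h_2\in\dom(d_0^{1/2})$ and $D_f^{2}d_0^{1/2}h_2\in\dom(d_0^{1/2})$ for every $h_2\in\mc{D}_2$. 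Since $d_0^{1/2}h_2\in\dom(d_0^{1/2})$ and $D_f^{2}=1-f^{*}f$, these are equivalent.

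\textbf{(i)$\Leftrightarrow$(iii).} For $h=h_1+h_2\in\dom(A)$, $A^{1/2}(h)=A_0^{1/2}h+\mul(A)$ and $\mul(A)\perp\cldom(A)\supseteq\LL$, so $P_{\LL}A^{1/2}(h)=\{P_{\LL}A_0^{1/2}h\}$. Using the partial isometries $V_1,V_2$ from the proof of Theorem \ref{thmRMRL} (with $V_1a_0^{1/2}h_1=A_0^{1/2}h_1$, $V_2d_0^{1/2}h_2=A_0^{1/2}h_2$, $V_1^{*}V_2=f$, and $\ran(V_1)=\ol{A_0^{1/2}(\mc{D}_1)}=\LL$, so $P_{\LL}=V_1V_1^{*}$), a short calculation yields $P_{\LL}A_0^{1/2}h=A_0^{1/2}h_1+V_1fd_0^{1/2}h_2$. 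Since $A_0^{1/2}h_1\in\dom(A_0^{1/2})$ and $A_0^{1/2}h_2\in\dom(A_0^{1/2})$, condition (i) is equivalent to $v_2:=V_2d_0^{1/2}h_2-V_1fd_0^{1/2}h_2\in\dom(A_0^{1/2})$ for every $h_2\in\mc{D}_2$. Taking adjoints of $V_2d_0^{1/2}\subseteq A_0^{1/2}$ shows that $y\in\dom(A_0^{1/2})$ forces $V_2^{*}y\in\dom(d_0^{1/2})$; applied to $y=v_2$ together with $V_2^{*}V_2d_0^{1/2}h_2=d_0^{1/2}h_2$ (since $d_0^{1/2}h_2$ lies in the initial space of $V_2$) and $V_2^{*}V_1=f^{*}$, this gives $V_2^{*}v_2=D_f^{2}d_0^{1/2}h_2\in\dom(d_0^{1/2})$, namely (iii). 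The converse (iii)$\Rightarrow$(i) is the main obstacle: writing $A_0=(WZ)^{*}\ol{WZ}$ from Corollary \ref{propDP} gives $\dom(A_0^{1/2})=\dom(\ol{WZ})$, so one must realize $v_2$ in the graph closure of $WZ$. The natural sequence $(-\alpha_n,h_2)$ with $\alpha_n\in\mc{D}_1$ and $a_0^{1/2}\alpha_n\to fd_0^{1/2}h_2$ yields a convergent $WZ$-image but need not converge in norm; (iii), equivalently $\mc{D}_2\subseteq\dom((T^{*}T)_0)$, supplies the extra control needed to modify this construction so that both the sequence and its $WZ$-image converge simultaneously, with norm limit $v_2$.

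\textbf{Decomposability of $D_gd^{1/2}$.} From $D_g=\matriz{D_f}{0}{0}{1}$ we obtain $D_gd^{1/2}=R_0\,\hat{\oplus}\,(\{0\}\times\M_2)$, where $R_0:=D_fd_0^{1/2}|_{\dom(d_0^{1/2})}$. By Theorem \ref{TdecompHassi} it suffices to show $\ran(R_0)\subseteq\cldom((D_gd^{1/2})^{*})$; once $R_0$ is closable, $\mul(\ol{D_gd^{1/2}})=\M_2$, whence $\cldom((D_gd^{1/2})^{*})=\M_2^{\perp}\supseteq\N_2\supseteq\ran(R_0)$ and decomposability follows. To see that (iii) forces $R_0$ to be closable, suppose $x_n\in\dom(d_0^{1/2})$ with $x_n\to 0$ and $D_fd_0^{1/2}x_n\to y$. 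For any $h_2\in\mc{D}_2$, the selfadjointness of $D_f$ and of $d_0^{1/2}$ combined with $D_f^{2}d_0^{1/2}h_2\in\dom(d_0^{1/2})$ give $\PI{D_fd_0^{1/2}x_n}{D_fd_0^{1/2}h_2}=\PI{x_n}{d_0^{1/2}D_f^{2}d_0^{1/2}h_2}\to 0$, so $y\perp D_fd_0^{1/2}(\mc{D}_2)$. Since $\mc{D}_2$ is a core of $d_0^{1/2}$ and $D_f$ is bounded, $D_fd_0^{1/2}(\mc{D}_2)$ is dense in $\ran(R_0)$, whose closure contains $y$; hence $y=0$.
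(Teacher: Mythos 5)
Your handling of (ii)$\Leftrightarrow$(iii) and of the decomposability of $D_gd^{1/2}$ is essentially the paper's argument and is fine. The genuine gap is the implication (iii)$\Rightarrow$(i), which you yourself call ``the main obstacle'' and then do not prove: reducing (i) to ``$v_2=P_{\LL^{\perp}}A_0^{1/2}h_2\in\dom(A_0^{1/2})$'' and observing that $V_2^{*}v_2=D_f^2d_0^{1/2}h_2\in\dom(d_0^{1/2})$ does not put $v_2$ in $\dom(A_0^{1/2})$, and the closing sentence about modifying the sequence $(-\alpha_n,h_2)$ so that ``both the sequence and its $WZ$-image converge simultaneously'' exhibits no such modification and gives no mechanism by which (iii) would produce one. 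As written you have established (i)$\Rightarrow$(iii) and (ii)$\Leftrightarrow$(iii) only.

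The missing ingredient --- and the way the paper closes the loop --- is an exact description of $\dom(A_0^{1/2})$ in terms of $V_1,V_2$. Since $\dom(A)$ is a core of the selfadjoint operator $A_0^{1/2}$, one has $A_0^{1/2}=\bigl(A_0^{1/2}|_{\dom(A)}\bigr)^{*}=\vect{V_1a_0^{1/2}|_{\mc{D}_1}}{V_2d_0^{1/2}|_{\mc{D}_2}}^{*}=\vecd{a_0^{1/2}V_1^{*}}{d_0^{1/2}V_2^{*}}$, using that the adjoint of a row is the column of the adjoints and that $(V_i\,r|_{\mc{D}_i})^{*}=rV_i^{*}$ because $V_i$ is bounded and $\mc{D}_i$ is a core of $r$. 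Hence $y\in\dom(A_0^{1/2})$ \emph{if and only if} $V_1^{*}y\in\dom(a_0^{1/2})$ and $V_2^{*}y\in\dom(d_0^{1/2})$. Applying this to $y=P_{\LL}A_0^{1/2}h_2=V_1fd_0^{1/2}h_2$ gives $V_1^{*}y=fd_0^{1/2}h_2\in\dom(a_0^{1/2})$ automatically (from $b_0=a_0^{1/2}fd_0^{1/2}|_{\mc{D}_2}$) and $V_2^{*}y=f^{*}fd_0^{1/2}h_2$, so condition (i) is equivalent, in both directions at once, to $f^{*}fd_0^{1/2}(\mc{D}_2)\subseteq\dom(d_0^{1/2})$, i.e.\ to (ii). With this two-sided characterization replacing your one-sided adjoint argument and the $WZ$ discussion, the rest of your proof goes through.
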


\begin{proof} Since $A^{1/2}(\dom(A)) =A_0^{1/2}(\dom(A)) \oplus \mul(A)$ and $\mul(A) \subseteq \LL^{\perp},$ it follows that
\begin{equation} \label{eqif}
P_{\LL} (A^{1/2}(\dom(A))) = P_{\LL} (A_0^{1/2}(\dom(A)) \oplus \mul(A)) = P_{\LL} (A_0^{1/2}(\dom(A))).
\end{equation}

Let $V_1$ and $V_2$ be the partial isometries given in  the proof of Theorem \ref{thmRMRL}.  
Then $f={V_1}^* V_2$ and, since $\LL=\ol{A_0^{1/2}(\mc{D}_1)}$, $P_{\LL}=V_1{V_1}^*.$ Also, 
$$A_0^{1/2}|_{\dom(A)}=\vect{V_1a_0^{1/2}|_{\mc{D}_1}}{V_2d_0^{1/2}|_{\mc{D}_2}},$$ and
$A_0^{1/2}=\vecd{a_0^{1/2}{V_1}^*}{d_0^{1/2}V_2^*},$ so that
$\dom(A_0^{1/2})=\dom(a_0^{1/2}{V_1}^*) \cap\dom(d_0^{1/2}V_2^*).$
Then 
\begin{equation} \label{eqif2}
P_{\LL} (A_0^{1/2}({\mc{D}_2})) \subseteq \dom(A_0^{1/2}) \Leftrightarrow
\dom(d_0^{1/2}f^*fd_0^{1/2}|_{\mc{D}_2})=\mc{D}_2.
\end{equation}
In fact, by Theorem \ref{thmRMRL},
$$
V_{1}^*P_{\LL} (A_0^{1/2}({\mc{D}_2}))=fd_0^{1/2}(\mc{D}_2) \subseteq \dom(a_0^{1/2})
$$
and
$$
V_{2}^*P_{\LL}  (A_0^{1/2}({\mc{D}_2}))=f^*fd_0^{1/2}(\mc{D}_2).
$$
Then \eqref{eqif2} follows. 

Since $gd^{1/2}|_{\mc{D}_2}=fd_0^{1/2}|_{\mc{D}_2}$ we have that \begin{equation}\label{dis1}
d^{1/2}g^*gd^{1/2}|_{\mc{D}_2}=d_0^{1/2}f^* fd_0^{1/2}|_{\mc{D}_2} \ \hat{\oplus} \ (\{0\} \times \M_2).
\end{equation}
Then  $(\mathit{i}\kern0.5pt) \Leftrightarrow (\mathit{ii}\kern0.5pt)$ follows from \eqref{eqif} and \eqref{eqif2}.

Applying \eqref{eqDg}, it can be seen that
\begin{equation}\label{dis2}
 d^{1/2}D_g^2d^{1/2}|_{\mc{D}_2}=d_0^{1/2}D_f^2d_0^{1/2}|_{\mc{D}_2} \ \hat{\oplus} \ (\{0\} \times \M_2).
\end{equation}
By \eqref{dis1},  $\dom(d^{1/2}g^*gd^{1/2}|_{\mc{D}_2})=\mc{D}_2$ if and only if $\dom(d_0^{1/2}f^* fd_0^{1/2}|_{\mc{D}_2})=\mc{D}_2.$
Then $(\mathit{ii}\kern0.5pt) \Leftrightarrow (\mathit{iii}\kern0.5pt)$ follows from \eqref{dis2} and from the fact that  $f^*fd_0^{1/2}|_{\mc{D}_2}+D_f^2d_0^{1/2}|_{\mc{D}_2}=d_0^{1/2}|_{\mc{D}_2}.$ 

Since equation \eqref{eqDg} holds and $\mul(D_gd^{1/2}|_{\mc{D}_2})=\M_2$ to see that $D_gd^{1/2}$ is decomposable it is sufficient to prove that the operator $D_fd_0^{1/2}$ is closable \cite[Theorem 3.10]{Hassi2}.
In fact, let $(y_n)_{n \geq 1} \subseteq \mc{D}_2$ be such that $y_n \ra 0$ and $D_fd_0^{1/2}y_n \ra h.$ Then, for every $h_2 \in \mc{D}_2,$
\begin{align*}
	\PI{h}{D_fd_0^{1/2}h_2}&=\underset{n \ra \infty}{\lim}\PI{D_fd_0^{1/2}y_n}{D_fd_0^{1/2}h_2}\\
	&=\underset{n \ra \infty}{\lim}\PI{y_n}{d_0^{1/2}D_f^2d_0^{1/2}h_2}=0,
\end{align*}
where we used that, by \eqref{eqif2}, $\dom(d_0^{1/2}D_f^2d_0^{1/2}|_{\mc{D}_2})=\mc{D}_2.$ Then $h \in \clran{(D_fd_0^{1/2})} \cap \ran(D_fd_0^{1/2})^{\perp}$ and  $h=0.$ 
\end{proof}

\begin{thm} \label{propsuma} Let $A \geq 0$ be a linear relation in $\HH,$ let $\St$ be a closed subspace of $\HH$ such that $P_{\St} (\dom(A)) \subseteq \dom(A).$
Then the following are equivalent:
\begin{enumerate}
	\item $\dom(A) \subseteq \dom(A_{\St});$
	\item $P_{\LL}( A^{1/2}({\dom(A)})) \subseteq \dom(A^{1/2});$
	\item $A=A_{\St}+A_{/ \St}.$
\end{enumerate}
\end{thm}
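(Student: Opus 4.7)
My plan is to establish $(\mathit{iii}\kern0.5pt) \Rightarrow (\mathit{i}\kern0.5pt) \Leftrightarrow (\mathit{ii}\kern0.5pt) \Rightarrow (\mathit{iii}\kern0.5pt)$. The implication $(\mathit{iii}\kern0.5pt) \Rightarrow (\mathit{i}\kern0.5pt)$ is immediate, since by definition of the sum of two linear relations, $\dom(A_{\St} + A_{/\St}) = \dom(A_{\St}) \cap \dom(A_{/\St})$.

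For $(\mathit{i}\kern0.5pt) \Leftrightarrow (\mathit{ii}\kern0.5pt)$, I would exploit identity \eqref{eqS}, which gives $\ol{s}\,h = s h = V_1^* A_0^{1/2} h$ for every $h \in \dom(A)$, together with the block form $A_0^{1/2} = \vecd{a_0^{1/2} V_1^*}{d_0^{1/2} V_2^*}$ derived in the proof of Lemma \ref{Propcloslr} and the identity $V_1 V_1^* = P_\LL$ (which holds because the final space of $V_1$ is $\LL$). Since $A_{\St} = s^\times \ol{s} \ \hat{\oplus} \ (\{0\} \times \mul(A))$, condition $(\mathit{i}\kern0.5pt)$ says exactly that $\ol{s}\,h \in \dom(s^\times) = \dom(a_0^{1/2}) \cap \dom(d_0^{1/2} f^*)$ for every $h \in \dom(A)$. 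Using $V_1^* V_1 V_1^* = V_1^*$ and $f^* = V_2^* V_1$, the two inclusions $V_1^* A_0^{1/2} h \in \dom(a_0^{1/2})$ and $f^* V_1^* A_0^{1/2} h \in \dom(d_0^{1/2})$ become $V_1^*(P_\LL A_0^{1/2} h) \in \dom(a_0^{1/2})$ and $V_2^*(P_\LL A_0^{1/2} h) \in \dom(d_0^{1/2})$, i.e., $P_\LL A_0^{1/2} h \in \dom(A_0^{1/2})$. Since $A^{1/2}(\dom(A)) = A_0^{1/2}(\dom(A)) \oplus \mul(A)$ and $\mul(A) \perp \LL$, this last statement is precisely $(\mathit{ii}\kern0.5pt)$.

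For $(\mathit{ii}\kern0.5pt) \Rightarrow (\mathit{iii}\kern0.5pt)$, Lemma \ref{Propcloslr} yields $\mc{D}_2 = \dom(d_0^{1/2} D_f^2 d_0^{1/2}|_{\mc{D}_2})$, whence $\mc{D}_2 \subseteq \dom((T^*T)_0)$ and so $\dom(A) \subseteq \dom(A_{/\St})$; combined with $(\mathit{i}\kern0.5pt)$, this forces $\dom(A) \subseteq \dom(A_{\St} + A_{/\St})$. A direct matrix computation, using $b_0 = a_0^{1/2} f d_0^{1/2}$ on $\mc{D}_2$, $c_0 = d_0^{1/2} f^* a_0^{1/2}$ on $\mc{D}_1$, and the identity $f^* f + D_f^2 = 1_{\N_2}$, then gives, for $h = \vecd{h_1}{h_2} \in \dom(A)$,
\[
s^\times \ol{s}\, h + (T^*T)_0 h = \vecd{a_0 h_1 + b_0 h_2}{c_0 h_1 + d_0^{1/2}(f^* f + D_f^2) d_0^{1/2} h_2} = A_0 h.
\]
Since $\mul(A_{\St}) = \mul(A) \supseteq \mul(A_{/\St}) = \M_2$, every $(h,y) \in A$ decomposes as a member of $A_{\St} + A_{/\St}$, so $A \subseteq A_{\St} + A_{/\St}$. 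To upgrade this to equality, I observe that $A_{\St} + A_{/\St}$ is nonnegative (being a sum of two nonnegative relations on a common domain), hence symmetric; as $A$ is selfadjoint, $A \subseteq A_{\St} + A_{/\St} \subseteq (A_{\St} + A_{/\St})^* \subseteq A^* = A$, giving $A = A_{\St} + A_{/\St}$.

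The main obstacle will be $(\mathit{i}\kern0.5pt) \Leftrightarrow (\mathit{ii}\kern0.5pt)$: the two conditions are phrased in quite different languages, and bridging them requires careful bookkeeping with the partial isometries $V_1$ and $V_2$ supplied by the proof of Theorem \ref{thmRMRL}, plus the observation that $\mul(A)$ is washed out both when intersecting $\ol{A^{1/2}(\mc{D}_1)}$ with $\cldom(A)$ and when projecting with $P_\LL$. The closing step of $(\mathit{ii}\kern0.5pt) \Rightarrow (\mathit{iii}\kern0.5pt)$, though soft, is essential: it is the selfadjointness of $A$ that rules out $A_{\St} + A_{/\St}$ being a proper extension.
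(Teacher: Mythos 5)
Your proposal is correct and follows essentially the same route as the paper: condition $(\mathit{i}\kern0.5pt)$ is translated into $P_{\LL}A_0^{1/2}(\dom(A))\subseteq \dom(A_0^{1/2})$ through the partial isometries $V_1,V_2$ (this is the content of \eqref{eqif}--\eqref{eqif2} and the first step of the paper's proof), and $(\mathit{ii}\kern0.5pt)\Rightarrow(\mathit{iii}\kern0.5pt)$ rests on the same pointwise identity $A_0h=s^{\times}\ol{s}\,h+d_0^{1/2}D_f^2d_0^{1/2}h_2$ obtained from $f^*f+D_f^2=1$ and the domain equalities of Lemma \ref{Propcloslr}. The only (harmless) divergence is the final upgrade of $A\subset A_{\St}+A_{/\St}$ to equality, where you invoke that a nonnegative relation in a complex space is symmetric, while the paper uses \eqref{sum} together with the selfadjointness of $A_{\St}$ and $A_{/\St}$; both arguments are valid.
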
	

\begin{proof}  $(\mathit{i}\kern0.5pt) \Rightarrow (\mathit{ii}\kern0.5pt)$: Let us see that $\dom(d_0^{1/2}f^*fd_0^{1/2}|_{\mc{D}_2})=\mc{D}_2.$
In fact, let $h_2 \in \mc{D}_2$ then $h_2\in  \dom(A_\St)=\dom(s^{\times}\ol{s}),$  where $s$ is as in \eqref{sdef}, and $s^{\times}\ol{s}$ is the operator part of $A_\St.$ Since $h_2 \in \mc{D}_2 \subseteq \dom(s)$ and $s$ is closable, it follows that 
$$\ol{s}h_2=sh_2=fd_0^{1/2}h_2 \in \dom(s^\times)=\dom(a_0^{1/2}) \cap \dom(d_0^{1/2}f^*).$$  Hence  $h_2 \in 
\dom(d_0^{1/2}f^*fd_0^{1/2}|_{\mc{D}_2}).$ 
Then, by \eqref{eqif} and \eqref{eqif2}, $P_{\LL}( A^{1/2}({\dom(A)})) \subseteq \dom(A^{1/2}).$
	
$(\mathit{ii}\kern0.5pt) \Rightarrow (\mathit{iii}\kern0.5pt)$: By the proof of Lemma \ref{Propcloslr}, $$\dom(d_0^{1/2}f^*fd_0^{1/2}|_{\mc{D}_2})=\dom(d_0^{1/2}D_f^2d_0^{1/2}|_{\mc{D}_2})=\mc{D}_2.$$ 
Also, since  $$g^*gd^{1/2}|_{\mc{D}_2}+D_g^2d^{1/2}|_{\mc{D}_2}=d^{1/2}|_{\mc{D}_2}$$ and
$\dom(d^{1/2}g^*gd^{1/2}|_{\mc{D}_2})=\dom(d^{1/2}D_g^2d^{1/2}|_{\mc{D}_2})=\mc{D}_2 \subseteq \dom(d^{1/2})$ (see Lemma \ref{Propcloslr}),  it follows that
\begin{equation} \label{dg}
d^{1/2}g^*gd^{1/2}|_{\mc{D}_2}+d^{1/2}D_g^2d^{1/2}|_{\mc{D}_2}=d|_{\mc{D}_2}.
\end{equation}
Next we show that $$s^{\times}s= \matriz{a_0}{b_0}{c_0}{d_0^{1/2}f^*fd_0^{1/2}|_{\mc{D}_2}}.$$ 
Let $h=\vecd{h_1}{h_2} \in \mc{D}_1 \oplus \mc{D}_2.$ Then 
$$s^{\times}sh=\vecd{a_0^{1/2}(a_0^{1/2}h_1+fd_0^{1/2}h_2)}{d_0^{1/2}f^*(a_0^{1/2}h_1+fd_0^{1/2}h_2)}=\matriz{a_0}{b_0}{c_0}{d_0^{1/2}f^*fd_0^{1/2}|_{\mc{D}_2}}h,$$ where the last equality follows from the fact that, since  $fd_0^{1/2}h_2 \in \dom(d_0^{1/2}f^*),$ it is possible to distribute.
Then, since $d^{1/2}g^*gd^{1/2}|_{\mc{D}_2}=d_0^{1/2}f^*fd_0^{1/2}|_{\mc{D}_2}  \ \hat{\oplus} \ ( \{0\} \times \M_2),$ it follows that
\begin{equation} \label{RMAS}
A_{\St} \supset s^{\times}s \ \hat{\oplus} \ ( \{0\} \times \mul(A))=\matriz{a}{b}{c}{d^{1/2}g^*gd^{1/2}|_{\mc{D}_2}}.
\end{equation}
 Clearly,  $$A_{/ \St} \supset \matriz{0}{0}{0}{d^{1/2}D_g^2 d^{1/2}|_{\mc{D}_2}}.$$ Then, by \cite[Lemma 5.5]{HassiMR} and \eqref{dg},
\begin{align} \label{A0sum}
\matriz{a}{b}{c}{d^{1/2}g^*gd^{1/2}|_{\mc{D}_2}} +\matriz{0}{0}{0}{d^{1/2}D_g^2d^{1/2}|_{\mc{D}_2}}=\matriz{a}{b}{c}{d|_{\mc{D}_2}}= A. 
\end{align}
Hence $A_{\St} + A_{/  \St} \supset A$ and, by \eqref{sum}, $$A=A^* \supset (A_{\St} + A_{/  \St})^*  \supset (A_{\St})^* + (A_{/  \St})^*= A_{\St} + A_{/  \St} \supset A.$$ So that $A=A_{\St}+A_{/ \St}.$ 

$(\mathit{iii}\kern0.5pt) \Rightarrow (\mathit{i}\kern0.5pt)$: It is straightforward.

%
\end{proof}

For a nonnegative operator $A \in L(\HH)$ and a closed subspace $\St \subseteq \HH,$ Pekarev
\cite{Pekarev} showed that the Schur complement $A_{ / \St}$ can be expressed as $A_{/\St}=A^{1/2}P_{\LL^{\perp}}A^{1/2}$ where $\LL =\ol{A^{1/2}(\St)}.$ 
In what follows, we extend this formula for a linear relation $A \geq 0$ in $\HH$ such that $P_{\St} (\dom(A)) \subseteq \dom(A)$ and $P_{\LL}( A^{1/2}({\dom(A)})) \subseteq \dom(A^{1/2}).$ 

\begin{cor} \label{proppekarev} Let $A \geq 0$ be a linear relation in $\HH,$ let $\St$ be a closed subspace of $\HH$ such that $P_{\St} (\dom(A)) \subseteq \dom(A)$ and $P_{\LL}( A^{1/2}({\dom(A)})) \subseteq \dom(A^{1/2}).$ Then
$$A_{/ \St} =A^{1/2}\ol{P_{\LL^{\perp}}A^{1/2}|_{\dom(A)}}, \ \ A_{\St}=A^{1/2}\ol{P_{\LL}A^{1/2}|_{\dom(A)}}.$$
\end{cor}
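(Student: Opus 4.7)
The plan is to establish the $A_\St$ formula first by a direct computation based on the partial isometry $V_1$ from the proof of Theorem \ref{thmRMRL}, and then to derive the $A_{/\St}$ formula by combining it with the additive decomposition $A = A_\St + A_{/\St}$ of Theorem \ref{propsuma} together with the resolution of identity $P_\LL + P_{\LL^\perp} = 1_\HH$.

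For the $A_\St$ identity, I would start from the formula $A_\St = S^*\ol{S} = s^\times \ol{s} \, \hat\oplus \, (\{0\} \times \mul(A))$ provided by Lemma \ref{lemaRL2}, together with the expression $s = V_1^* A_0^{1/2}|_{\dom(A)}$ from equation \eqref{eqS}. Since $\dom(A)$ is a core of $A_0^{1/2}$ by Theorem \ref{thmRMRL}(2) and $V_1^*$ is a bounded everywhere-defined operator on $\cldom(A)$, the closure $\ol{s}$ extends to $V_1^* A_0^{1/2}$ on all of $\dom(A_0^{1/2})$. Since $\mul(A) \subseteq \cldom(A)^\perp \subseteq \LL^\perp$, the projector $P_\LL$ annihilates the multivalued part of $A^{1/2}|_{\dom(A)}$, so $P_\LL A^{1/2}|_{\dom(A)} = V_1 s$ as linear relations; taking closures and using $V_1 V_1^* = P_\LL$ yields $\ol{P_\LL A^{1/2}|_{\dom(A)}} = V_1 \ol{s}$. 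Because $V_1$ is bounded, $s^\times = A_0^{1/2} V_1$ on its natural domain, and unwinding the product of relations gives $A^{1/2} V_1 \ol{s} = A_0^{1/2} P_\LL A_0^{1/2} \, \hat\oplus \, (\{0\} \times \mul(A)) = s^\times \ol{s} \, \hat\oplus \, (\{0\} \times \mul(A)) = A_\St$.

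For the $A_{/\St}$ identity, my approach is to use the decomposition $A = A_\St + A_{/\St}$ from Theorem \ref{propsuma} under the standing hypothesis. First I would verify that $A^{1/2} \cdot A^{1/2}|_{\dom(A)} = A$ as linear relations, which follows from a direct check using $\dom(A) = \dom(A_0)$ and the selfadjointness of $A^{1/2}$. Then $1_\HH = P_\LL + P_{\LL^\perp}$ allows the splitting $A^{1/2}|_{\dom(A)} = P_\LL A^{1/2}|_{\dom(A)} + P_{\LL^\perp} A^{1/2}|_{\dom(A)}$, and the invariance hypothesis $P_\LL(A^{1/2}(\dom(A))) \subseteq \dom(A^{1/2})$ ensures that each closed summand, precomposed with $A^{1/2}$, yields a nonnegative selfadjoint linear relation. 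Matching the resulting additive decomposition of $A$ against $A = A_\St + A_{/\St}$ and using the formula for $A_\St$ established above identifies $A_{/\St}$ with $A^{1/2} \ol{P_{\LL^\perp} A^{1/2}|_{\dom(A)}}$.

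The main obstacle will be the distributivity and closure-commutation step in the $A_{/\St}$ proof: products of linear relations do not in general distribute over sums, and justifying that $A^{1/2}$ composes correctly with each of $\ol{P_\LL A^{1/2}|_{\dom(A)}}$ and $\ol{P_{\LL^\perp} A^{1/2}|_{\dom(A)}}$ so that the sum reproduces $A$ uses the invariance hypothesis in an essential way, together with careful bookkeeping of multivalued parts via $\mul(A) \subseteq \LL^\perp$. As a cross-check I would compare with the matrix formula of Theorem \ref{thmSchur}, using $V_1$ and $V_2$ to compute $A_0^{1/2} P_{\LL^\perp} A_0^{1/2} \vecd{x_1}{x_2} = \vecd{0}{d_0^{1/2} D_f^2 d_0^{1/2} x_2}$ and match the block $(T^*T)_0$ coming from $T = \ol{D_g d^{1/2}|_{\mc{D}_2}}$.
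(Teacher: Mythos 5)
Your derivation of the $A_{\St}$ formula is essentially the paper's own argument: $P_{\LL}A^{1/2}|_{\dom(A)}=V_1s$, closures commute with the bounded $V_1$, and $V_1V_1^*=P_{\LL}$ give $s^{\times}\ol{s}=A_0^{1/2}P_{\LL}\ol{P_{\LL}A_0^{1/2}|_{\dom(A)}}$, from which $A_{\St}=A^{1/2}\ol{P_{\LL}A^{1/2}|_{\dom(A)}}$ follows after restoring the multivalued part. That half is sound (your claim that $\ol{s}$ equals $V_1^*A_0^{1/2}$ on all of $\dom(A_0^{1/2})$ is slightly stronger than needed and not obviously true as an equality of operators, but the inclusion $\ol{V_1^*A_0^{1/2}|_{\dom(A)}}\supseteq V_1^*\ol{A_0^{1/2}|_{\dom(A)}}$ is all the argument uses).

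The $A_{/\St}$ half has a genuine gap beyond the distributivity issue you flag: even if you succeed in writing $A=A_{\St}+W^*W$ with $W:=\ol{P_{\LL^{\perp}}A^{1/2}|_{\dom(A)}}$, you cannot cancel $A_{\St}$ against the decomposition $A=A_{\St}+A_{/\St}$ of Theorem \ref{propsuma} to conclude $A_{/\St}=W^*W$. The operator-like sum of linear relations only records the summands on the intersection of their domains, so $B+C=B+D$ forces $C$ and $D$ to agree merely on $\dom(B)\cap\dom(C)=\dom(B)\cap\dom(D)$, here essentially $\dom(A)$; but $\dom(A_{/\St})=\St\oplus\dom(T^*T)$ and $\dom(W^*W)$ both properly exceed $\dom(A)$ in general, and $\dom(A)$ is a core of $T$ but not, a priori, of $T^*T$, so agreement on $\dom(A)$ does not identify the two selfadjoint relations. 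The same objection applies to your proposed cross-check via $A_0^{1/2}P_{\LL^{\perp}}A_0^{1/2}$ evaluated on $\dom(A)$. The paper avoids this by proving $A_{/\St}=W^*W$ directly: it shows $\Vert th_2\Vert=\Vert P_{\LL^{\perp}}A_0^{1/2}h\Vert$ on $\dom(A)$, deduces that $W_0$ is closable with $\St\subseteq\ker(W_0)$ (this uses the nontrivial inclusion $\LL^{\perp}\cap\dom(A_0^{1/2})\subseteq A_0^{-1/2}(\St^{\perp})$, which also yields $\ran(W_0^*W_0)\subseteq\St^{\perp}$), and then matches the square roots of the operator parts of $W^*W$ and $A_{/\St}$ on their full domains via the equivalence $h\in\dom(W_0)\Leftrightarrow P_{\St^{\perp}}h\in\dom(T_0)$ with equality of norms. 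You would need to supply an argument of this kind for the equality of the relations on all of their domains, not just on $\dom(A)$.
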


\begin{proof} Let $h=h_1+h_2 \in \mc{D}_1\oplus \mc{D}_2.$ Then 
\begin{align*}
\Vert t h_2 \Vert^2&=\PI{D_fd_0^{1/2}h_2}{D_fd_0^{1/2}h_2}=\PI{(1-f^*f)d_0^{1/2}h_2}{d_0^{1/2}h_2}\\
&=\PI{V_2^*(1-V_1{V_1}^*)V_2d_0^{1/2}h_2}{d_0^{1/2}h_2}=\PI{(1-P_\LL)A_0^{1/2}h_2}{A_0^{1/2}h_2}\\
&=\PI{P_{\LL^{\perp}}A_0^{1/2}h_2}{A_0^{1/2}h_2}=\Vert P_{\LL^{\perp}}A_0^{1/2}h\Vert^2,
\end{align*}
where we used that $P_{\LL^{\perp}}A_0^{1/2}h=P_{\LL^{\perp}}A_0^{1/2}h_2,$ because $A_0^{1/2}h_1 \in \LL.$ Then, since $t$ is closable (see Lemma \ref{Propcloslr}), $P_{\LL^{\perp}}A_0^{1/2}|_{\dom(A)}$ is also closable. 
Set $W:=\ol{P_{\LL^{\perp}}A^{1/2}|_{\dom(A)}}.$ Then, since
$	P_{\LL^{\perp}} A^{1/2}|_{\dom(A)}=P_{\LL^{\perp}}A_0^{1/2}|_{\dom(A)} \ \hat{\oplus} \ (\{0\} \times \mul(A)),$
it follows that
\begin{equation}\label{eqW}
W=\ol{P_{\LL^{\perp}}A_0^{1/2}|_{\dom(A)}}  \ \hat{\oplus} \ (\{0\} \times \mul(A)).
\end{equation} 
Moreover, since $t$ and $P_{\LL^{\perp}}A_0^{1/2}|_{\dom(A)}$ are closable operators, by \eqref{eqT} and \eqref{eqW}, it follows that the operator part of $T$ is $T_0=\ol{t}$ and the operator part of $W$ is $W_0=\ol{P_{\LL^{\perp}}A_0^{1/2}|_{\dom(A)}} .$ Also,  
$$\LL^{\perp} \cap \dom(A_0^{1/2})\subseteq A_0^{-1/2}(\St^{\perp}):=\{y \in \dom(A_0^{1/2}): A_0^{1/2}y \in \St^{\perp}\}.$$
In fact, let $y\in  \LL^{\perp} \cap \dom(A_0^{1/2}).$ Then, for every $h_1\in \mc{D}_1,$
$$0=\PI{y}{A_0^{1/2}h_1}=\PI{A_0^{1/2}y}{h_1}.$$
So that
$$A_0^{1/2}y \in \mc{D}_1^{\perp}=(\St^{\perp} \oplus \M_1) \cap \cldom(A)\subseteq \St^{\perp}$$
because $\M_1= \St \cap \mul(A).$ 
Then 
\begin{equation} \label{ran}
	\ran({W_0}^*W_0) \subseteq \St^{\perp}.
\end{equation}
In fact, let $y \in \ran({W_0}^*W_0).$ Then, since $\ran(W_0) \subseteq \LL^{\perp},$ it follows that $$y={W_0}^*W_0x=A_0^{1/2}W_0x,$$ for some $x \in \dom({W_0}^*W_0).$ Then $W_0x \in \LL^{\perp} \cap \dom(A_0^{1/2}) \subseteq A_0^{-1/2}(\St^{\perp})$ and $y=A_0^{1/2}W_0x \in \St^{\perp}.$ 
So that, by \eqref{ran}, $\St \subseteq \ker(W_0^*W_0)=\ker(W_0)\subseteq \dom(W_0),$ where we used Theorem \ref{thmVN}. 
Hence
\begin{equation} \label{eq1na}
	h \in \dom(W_0) \Leftrightarrow P_{\St^{\perp}}h \in \dom(T_0) \mbox{ and } \Vert W_0h\Vert =\Vert T_0P_{\St^{\perp}}h \Vert.
\end{equation}

Now we  show that
$$A_{/ \St}=\matriz{0}{0}{0}{T^*T}=W^*W=A^{1/2}\ol{P_{\LL^{\perp}}A^{1/2}|_{\dom(A)}},$$
where for the last equality we used that $\ran(\ol{P_{\LL^\perp}A^{1/2}|_{\dom(A)}}) \subseteq \LL^{\perp}.$
 
Suppose that $(W^*W)_0$ is the operator part of $W^*W$ then, by \cite[Proposition 2.7]{Hassi}, 
$$\PI{(W^*W)_0^{1/2}u}{(W^*W)_0^{1/2}v}=\PI{W_0u}{W_0v},$$ 
for every  $u, v \in \dom((W^*W)_0^{1/2})=\dom(W_0).$

Suppose that $(A_{/ \St})_0$ is the operator part of $A_{/ \St}.$
Let  $h \in \dom((A_{/ \St})_0^{1/2})=\St \oplus \dom(T_0)$ then $h=h_1+h_2$ with $h_1 \in \St$ and $h_2 \in \dom(T_0).$ Then, by \eqref{eq1na}, $h \in \dom(W_0).$
Conversely, if $h \in \dom(W_0),$ by \eqref{eq1na}, $P_{\St^{\perp}}h \in \dom(T_0).$ Then $h=P_{\St}h+P_{\St^{\perp}}h \in \St \oplus \dom(T_0)=\dom((A_{/ \St})_0^{1/2}).$

Also, if $h\in  \dom((W^*W)_0^{1/2})=\dom(W_0)=\dom((A_{/ \St})_0^{1/2}),$ it follows that $h=h_1+h_2 \in \St \oplus \St^{\perp}$ and, by \eqref{eq1na}, 
$$\Vert (A_{/ \St})_0^{1/2} h \Vert=\Vert T_0 h_2 \Vert = \Vert W_0 h\Vert=\Vert  (W^*W)_0^{1/2} h \Vert.$$
Then $A_{/ \St} = W^*W.$   

Finally, by \eqref{eqS},
$$V_1s=P_{\LL}A_0^{1/2}|_{\dom(A)}.$$ Then, since $s$ is closable and $V_1$ is a partial isometry, the operator $P_{\LL}A_0^{1/2}|_{\dom(A)}$ is closable and 
$$\ol{s}=\ol{V_1^*P_{\LL}A_0^{1/2}|_{\dom(A)}}={V_1}^*\ol{P_{\LL}A_0^{1/2}|_{\dom(A)}}.$$ 
So that
$$s^{\times}\ol{s}=A_0^{1/2}P_{\LL}V_1{V_1}^* \ol{P_{\LL}A_0^{1/2}|_{\dom(A)}}=A_0^{1/2}P_{\LL}\ol{P_{\LL}A_0^{1/2}|_{\dom(A)}},$$
and, since $\ran(\ol{P_{\LL}A^{1/2}|_{\dom(A)}}) \subseteq \LL,$ 
\begin{align*}
	A^{1/2}P_{\LL}\ol{P_{\LL}A^{1/2}|_{\dom(A)}}&=A^{1/2}\ol{P_{\LL}A^{1/2}|_{\dom(A)}}=A_0^{1/2}\ol{P_{\LL}A_0^{1/2}|_{\dom(A)}} \ \hat{\oplus} \ ( \{0\} \times \mul(A))\\
	&=s^{\times}\ol{s} \ \hat{\oplus} \ ( \{0\} \times \mul(A))=A_{\St}.
\end{align*}

\end{proof}

\begin{cor} \label{corsuma} Let $A \geq 0$ be a linear relation in $\HH$ and let $\St$ be a closed subspace of $\HH.$ 
If $A$ and $A^{1/2}$ admit a matrix representation with respect to $\St \oplus \St^{\perp}$ and $\LL \oplus \LL^{\perp},$ respectively, then $$A_{/ \St} =A^{1/2}\ol{P_{\LL^{\perp}}A^{1/2}|_{\dom(A)}}, \ \ A_{\St}=A^{1/2}\ol{P_{\LL}A^{1/2}|_{\dom(A)}}, \ \mbox{ and } A=A_{\St} + A_{ / \St}.$$
\end{cor}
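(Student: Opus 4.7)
The plan is to reduce the statement to two already-proved results: Corollary~\ref{proppekarev} for the Pekarev-type identities and Theorem~\ref{propsuma} for the additivity $A=A_{\St}+A_{/\St}$. Both invoke the same two hypotheses, namely $P_{\St}(\dom(A))\subseteq \dom(A)$ and $P_{\LL}(A^{1/2}(\dom(A)))\subseteq \dom(A^{1/2})$, so it suffices to derive each of these from the matrix-representation assumptions of the corollary. The first is immediate: by the equivalence $(\mathit{i}\kern0.5pt)\Leftrightarrow(\mathit{ii}\kern0.5pt)$ of Theorem~\ref{theoMLR}, the existence of a block-matrix representation of $A$ with respect to $\St\oplus \St^\perp$ is exactly the $P_{\St}$-invariance of $\dom(A)$.

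For the second, I would first apply Theorem~\ref{theoMLR} to $A^{1/2}$ with respect to $\LL\oplus \LL^\perp$ to extract the weaker-looking invariance $P_{\LL}(\dom(A^{1/2}))\subseteq \dom(A^{1/2})$, and then bridge from $\dom(A^{1/2})$ to $A^{1/2}(\dom(A))$ using the square-root decomposition. Concretely, fix $y\in A^{1/2}(x)$ with $x\in \dom(A)$; writing $A^{1/2}=A_0^{1/2}\ \hat{\oplus}\ (\{0\}\times \mul(A))$ and using $\mul(A^{1/2})=\mul(A)$, one has $y=A_0^{1/2}x+m$ with $m\in \mul(A)$. Since $A_0=A_0^{1/2}A_0^{1/2}$ and $x\in \dom(A_0)$, the vector $A_0^{1/2}x$ lies in $\dom(A_0^{1/2})=\dom(A^{1/2})$, so its $P_{\LL}$-image still lies in $\dom(A^{1/2})$ by the invariance just obtained. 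Finally, $\LL\subseteq \cldom(A)=\mul(A)^\perp$ forces $P_{\LL}m=0$, and combining these two facts gives $P_{\LL}y\in \dom(A^{1/2})$ as required.

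Once both conditions are in hand, the two Pekarev-type formulae follow directly from Corollary~\ref{proppekarev} and the additivity $A=A_{\St}+A_{/\St}$ from the implication $(\mathit{ii}\kern0.5pt)\Rightarrow(\mathit{iii}\kern0.5pt)$ of Theorem~\ref{propsuma}; no further calculation is needed. The only genuinely non-routine step is the transfer of the $P_{\LL}$-invariance from $\dom(A^{1/2})$ to the set $A^{1/2}(\dom(A))$, and the key observation that makes it work is simply that $\mul(A)\perp \LL$.
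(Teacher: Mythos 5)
Your proposal is correct and takes essentially the same route as the paper: both proofs extract $P_{\St}(\dom(A))\subseteq\dom(A)$ and $P_{\LL}(\dom(A^{1/2}))\subseteq\dom(A^{1/2})$ from Theorem \ref{theoMLR} and then reduce to Corollary \ref{proppekarev} and Theorem \ref{propsuma}. Your explicit bridging step (writing $y=A_0^{1/2}x+m$ and using $P_{\LL}m=0$) is just an unpacked version of the paper's one-line observation that $A^{1/2}(\dom(A))\subseteq\dom(A^{1/2})\oplus\mul(A)$ with $\mul(A)\perp\LL$.
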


\begin{proof}  By Theorem \ref{theoMLR}, $P_{\St} (\dom(A)) \subseteq \dom(A)$ and $P_{\LL} (\dom(A^{1/2})) \subseteq \dom(A^{1/2}).$ Then, since $A^{1/2}(\dom(A)) \subseteq \dom(A^{1/2})  \oplus \mul(A),$ it follows that $$P_{\LL} (A^{1/2}(\dom(A)))\subseteq P_{\LL} (\dom(A^{1/2})) \subseteq \dom(A^{1/2}).$$ Then, the result follows from Corollary \ref{proppekarev} and  Theorem \ref{propsuma}.
\end{proof}

\section*{Acknowledgments}

  Maximiliano Contino and Alejandra Maestripieri  were
  supported by CONICET PIP 0168.  The work of Stefania Marcantognini
  was done during her stay at the Instituto Argentino de Matem\'atica
  with an appointment funded by the CONICET.  She is grateful
  to the institute for its hospitality and to the CONICET for
  financing her post.

 \goodbreak

%

\end{document}